\theoremstyle{plain}
\newtheorem{theorem}{Theorem}[section]
\newtheorem{lemma}[theorem]{Lemma}
\theoremstyle{definition}
\newtheorem{assumption}[theorem]{Assumption}
\theoremstyle{remark}
\newtheorem{example}[theorem]{Example}
\DeclareMathOperator{\statdim}{stat.dim}
\DeclareMathOperator{\E}{\mathbb{E}}
\DeclareMathOperator{\PP}{\mathbb{P}}
\DeclareMathOperator{\iid}{\overset{\mathrm{iid}}{\sim}}
\DeclareMathOperator{\prox}{prox}
\DeclareMathOperator{\dist}{dist}
\DeclareMathOperator{\argmin}{argmin}
\newcommand{\env}{\mathsf{env}}
\newcommand{\indep}{\perp \!\!\! \perp}
\newcommand{\R}{\mathbb{R}}
\newcommand{\bv}{\bm{v}}
\newcommand{\bu}{\bm{u}}
\newcommand{\bg}{\bm{g}}
\newcommand{\bb}{\bm{b}}
\newcommand{\bx}{\bm{x}}
\newcommand{\by}{\bm{y}}
\newcommand{\bp}{\bm{p}}
\newcommand{\bw}{\bm{w}}
\newcommand{\ma}{\mathcal{A}}
\newcommand{\ml}{\mathcal{L}}
\newcommand{\mg}{\mathcal{G}}
\newcommand{\mh}{\mathcal{H}}
\def\a{a}
\icmltitlerunning{Phase transitions for the existence of unregularized M-estimators
in single index models}
\begin{document}

\twocolumn[
\icmltitle{Phase transitions for the existence of unregularized M-estimators \\
in single index models}

% It is OKAY to include author information, even for blind
% submissions: the style file will automatically remove it for you
% unless you've provided the [accepted] option to the icml2025
% package.

% List of affiliations: The first argument should be a (short)
% identifier you will use later to specify author affiliations
% Academic affiliations should list Department, University, City, Region, Country
% Industry affiliations should list Company, City, Region, Country

% You can specify symbols, otherwise they are numbered in order.
% Ideally, you should not use this facility. Affiliations will be numbered
% in order of appearance and this is the preferred way.
\icmlsetsymbol{equal}{*}

\begin{icmlauthorlist}
\icmlauthor{Takuya Koriyama}{chicago}
\icmlauthor{Pierre C. Bellec}{rutgers}

\end{icmlauthorlist}

\icmlaffiliation{chicago}{University of Chicago, Booth School of Business}
\icmlaffiliation{rutgers}{Department of Statistics, Rutgers University}

\icmlcorrespondingauthor{Takuya Koriyama}{tkoriyam@uchicago.edu}
% \icmlcorrespondingauthor{Firstname2 Lastname2}{first2.last2@www.uk}

% You may provide any keywords that you
% find helpful for describing your paper; these are used to populate
% the "keywords" metadata in the PDF but will not be shown in the document
\icmlkeywords{
proportional asymptotics, CGMT, conic geometry, convex analysis in Hilbert space
}

\vskip 0.3in
]

% this must go after the closing bracket ] following \twocolumn[ ...

% This command actually creates the footnote in the first column
% listing the affiliations and the copyright notice.
% The command takes one argument, which is text to display at the start of the footnote.
% The \icmlEqualContribution command is standard text for equal contribution.
% Remove it (just {}) if you do not need this facility.

\printAffiliationsAndNotice{}  % leave blank if no need to mention equal contribution
% \printAffiliationsAndNotice{\icmlEqualContribution} % otherwise use the standard text.

\begin{abstract}
This paper studies phase transitions for the existence of unregularized M-estimators under proportional asymptotics where the sample size $n$ and feature dimension $p$ grow proportionally with $n/p \to \delta \in (1, \infty)$. We study the existence of M-estimators in single-index models where the response $y_i$ depends on covariates \(\bm{x}_i \sim N(0, I_p)\) through an unknown index \(\bm{w} \in \mathbb{R}^p\) and an unknown link function. An explicit expression is derived for the critical threshold $\delta_\infty$ that determines the phase transition for the existence of the M-estimator, generalizing the results of \citet{candes2020phase} for binary logistic regression to other single-index models.
Furthermore, we investigate the existence of a solution to the nonlinear system of equations governing the asymptotic behavior of the M-estimator when it exists. The existence of solution to this system for $\delta > \delta_\infty$ remains largely unproven outside the global null in binary logistic regression. We address this gap with a proof that the system admits a solution if and only if $\delta > \delta_\infty$, providing a comprehensive theoretical foundation for proportional asymptotic results that require as a prerequisite the existence of a solution to the system.
\end{abstract}

\section{Introduction}
Let $(\bx_i,y_i)_{i\in[n]}$ be a sample of i.i.d. observations
where $\bx_i\in\R^p$ and follows the normal distribution $\bx_i\sim N(0,I_p)$. 
We consider responses $y_i\in\mathcal Y$
where $\mathcal Y\subset \R$ 
and assume a single-index model of the form
$$
\PP(y_i \le t \mid \bx_i)= F(t, \bx_i^\top \bw),
\qquad
\forall t\in \R$$
where $F:\R\times \R\to [0,1]$ is an unknown deterministic function 
 and $\bw\in\R^p$ is an unknown index with $\|\bw\|=1$.
This includes generalized linear models, such as
the Poisson model (in which $\mathcal Y=\{0,1,2,\dots\}$)
or Binary logistic model (in which $\mathcal Y=\{-1,1\}$ for instance).

An unregularized M-estimator is fit to observed data
$(y_i,\bx_i)_{i\in[n]}$ by the minimization problem
\begin{equation}
\label{infimum}
\inf_{\bb\in \R^p} \sum_{i=1}^n \ell_{y_i}(\bx_i^\top\bb)
\end{equation}
where $\ell_{y}(\cdot)$ is convex for every $y\in\mathcal Y$.
If the infimum is achieved, we say the M-estimator exists
and denote it by $\hat{\bb}$, i.e., 
$$
\hat{\bb}\in \argmin_{\bb\in \R^p} \sum_{i=1}^n \ell_{y_i}(\bx_i^\top \bb).
$$
In this paper we focus on a high dimensional regime where the sample size and feature grow proportionally as
$$
\frac{n}{p} \to \delta\in (1, +\infty)
$$
Here the constant $\delta$ quantifies sample size per dimensions.  In this proportional asymptotic regime, under logistic regression model with binary
response $\mathcal Y=\{-1,1\}$
and with $\ell_{y}(t)=\log(1+\exp(-yt))$
the logistic loss, the seminal work of \citet{candes2020phase}
establishes that the existence of the M-estimator undergoes
a sharp phase transition at a critical threshold $\delta_{\infty}$:
\begin{itemize}
\item
    if $\delta > \delta_{\infty}$ then the M-estimator exists
with high-probability (i.e., the infimum in \eqref{infimum} is attained),
while
\item
    if $\delta < \delta_{\infty}$ then the M-estimator does not exist
(i.e., the infimum is not attained) with high-probability.
\end{itemize}
\begin{figure*}[ht]
    \centering
    \begin{subfigure}{0.3\textwidth}
        \centering
        \includegraphics[width=\textwidth]{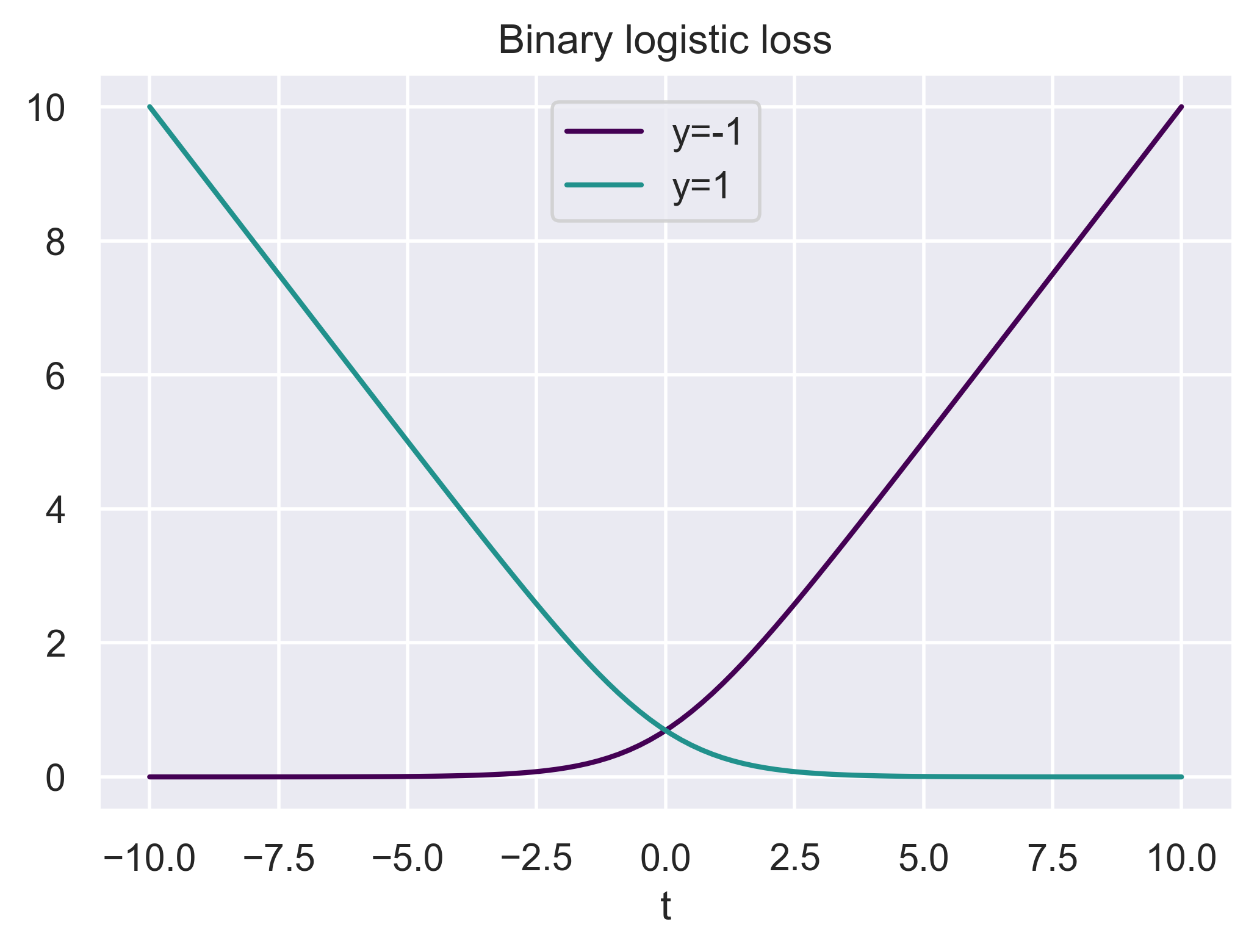} 
        \caption{Logistic loss}
    \end{subfigure}
    \begin{subfigure}{0.3\textwidth}
        \centering
        \includegraphics[width=\textwidth]{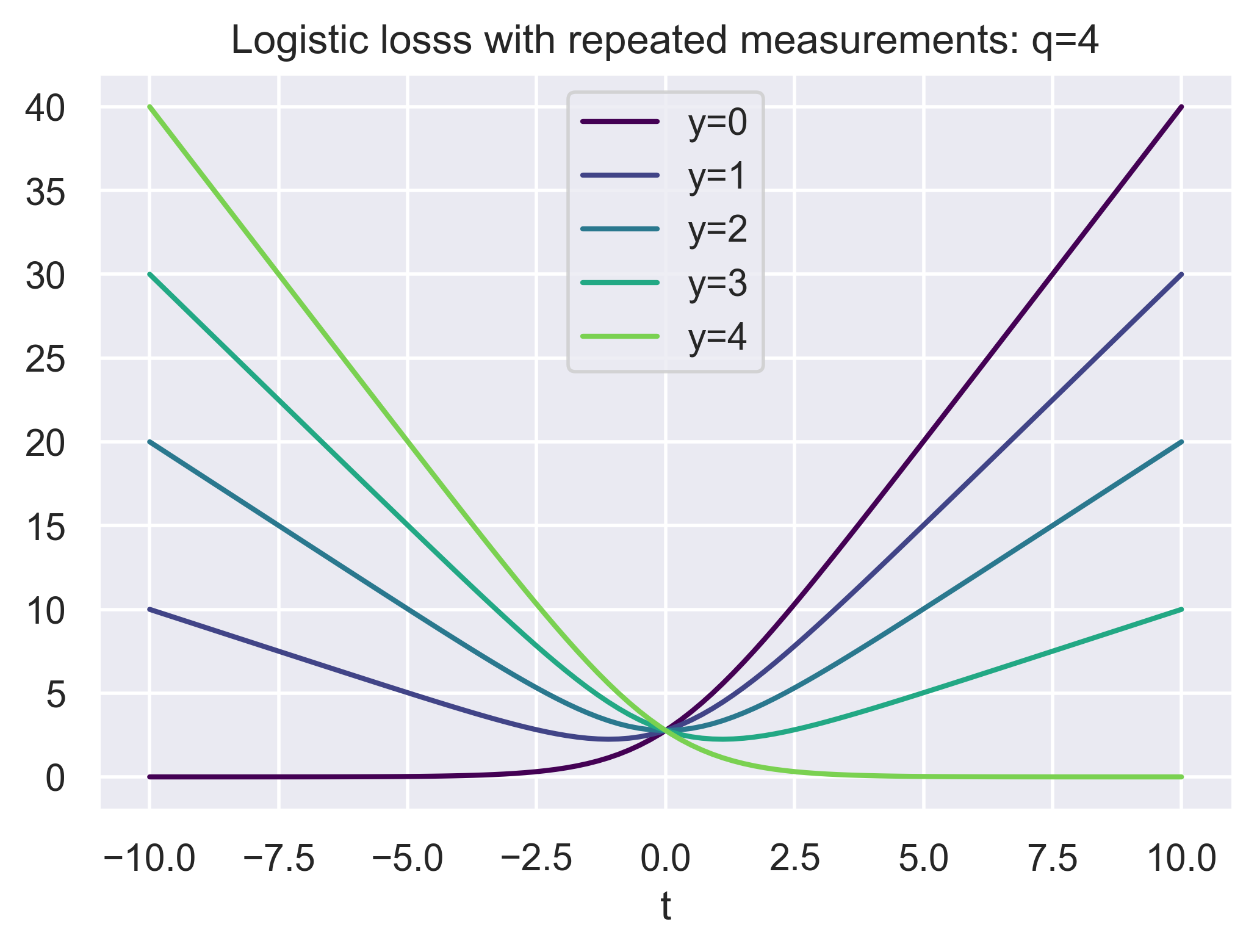} % Replace with your image file
        \caption{Binomial loss}
    \end{subfigure}
    % \hfill
    \begin{subfigure}{0.3\textwidth}
        \centering
        \includegraphics[width=\textwidth]{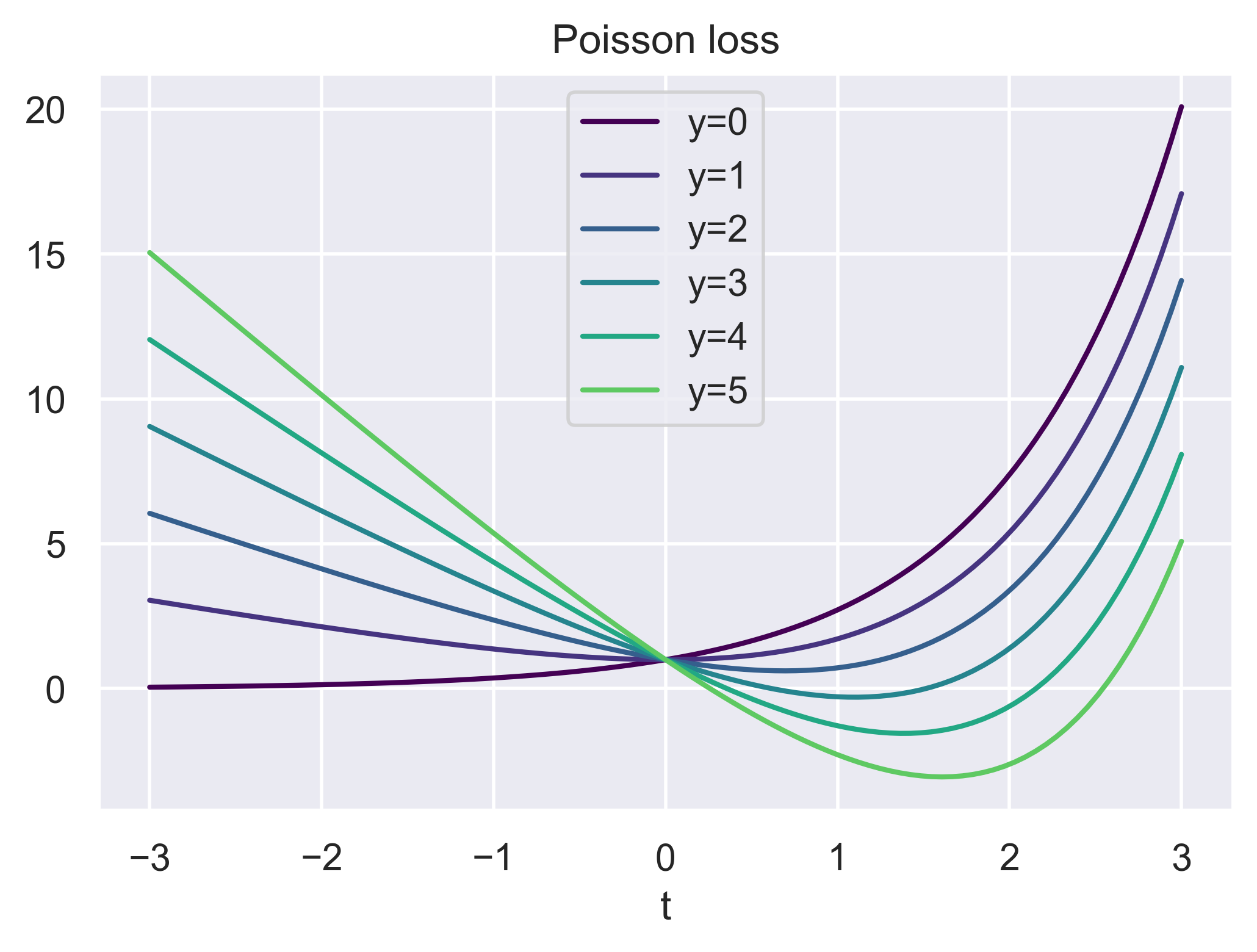} % Replace with your image file
        \caption{Poisson}
    \end{subfigure}
    \caption{Three examples of loss functions.}
    \label{fig:horizontal}
\end{figure*}

If $\delta > \delta_{\infty}$,
the behavior of the unregularized M-estimator $\hat{\bb}$,
including the limit in probability of 
$\hat{\bb}^\top\bw$ and the limit in probability of
$\|(I_p-\bw\bw^\top)\hat{\bb}\|_2$, is characterized
\cite{sur2018modern}
by the nonlinear system with three unknowns $(\gamma,a,\sigma)\in \R_{>0}\times \R\times \R_{>0}$:
\begin{align}
    \begin{split}\label{eq:system}
        \gamma^{-2} \delta^{-1} \sigma^2 &=  \E[\ell_Y'(\prox[\gamma\ell_Y](\a U+\sigma G))^2],\\
        0 &= \E[U \ell_Y'(\prox[\gamma \ell_Y](\a U+\sigma G))],\\
        \sigma(1-\delta^{-1}) &= \E[G \prox[\gamma \ell_Y](\a U + \sigma G)], 
    \end{split}
\end{align}
where $G\sim N(0,1)$ is independent
of $(U,Y)$, and $(U,Y)$ has the same distribution 
as $(\bx_i^\top \bw, y_i)$. In particular $U\sim N(0,1)$.
The phase transition result of \citet{candes2020phase}
for Gaussian design and binary logistic regression
has been extended by \citet{tang2020existence}
to elliptic covariate distributions and general binary
response models.
\citet{han2022gaussian} extended the phase transition of the
logistic regression model with Gaussian covariates to
the constrained minimization setting where
the infimum in \eqref{infimum} is restricted to a closed convex cone.
{
More broadly, the phase transition phenomena studied in our paper are connected to earlier works in statistical physics (\cite{cover1965geometrical, gardner1988optimal, krauth1989storage}). Especially, \cite{cover1965geometrical} analyzed the geometry of linear inequalities and derived $\delta_\infty=2$ under the null, i.e., when $\bx_i$ is independent of $y_i$. 
More recently, similar phase transition behavior has been investigated in \cite{mignacco2020role} for Gaussian mixtures models and in \cite{gerace2020generalisation} for random feature models. 
}

Results such as \citet{sur2018modern,salehi2019impact}
studying the behavior of the M-estimator on the side
of the phase transition where it exists with high-probability
assume that the nonlinear system \eqref{eq:system}
admits a unique solution.
Under the global null in binary logistic regression, \citet{sur2019likelihood} establishes that
$\delta_\infty = 2$ and that the system
\eqref{eq:system} admits a unique solution if and only if
$\delta > 2$.
Beyond the global null, it was observed \cite{sur2018modern}
that the system \eqref{eq:system} can be solved numerically
if $\delta > \delta_{\infty}$ (where $\delta_{\infty}$ is characterized
in \citet{candes2020phase}),
and that if the solution exists, it is unique (see Remark 2 of the supplement of \citet{sur2018modern}).
However to our knowledge there is no
proof yet that the system admits a solution for $\delta > \delta_{\infty}$
except under the global null
(see discussion after eq. (16) of the supplement of \citet{sur2018modern}).

The goal of the present paper is twofold:
\begin{itemize}
    \item
        To characterize the critical threshold
        $\delta_\infty$ for Gaussian covariates beyond
        binary response models, for instance the Poisson model.
    \item
        To prove that the system \eqref{eq:system} admits a unique solution
        if and only if $\delta>\delta_\infty$.
\end{itemize}

\section{Main Result}

Let us introduce the three examples of interest
that our assumptions will cover.

% It will be useful to consider an indicator function function
% $I_c:\mathcal Y\times \R : \{0,1\}$ defined by
% $$
% I_c(y,t) =
% \begin{cases}
%     1  & \text{ if } \lim_{x\to+\infty}\ell_y(tx)=+\infty
%     \\
%     0  & \text{ otherwise }.
% \end{cases}
% $$
% This function characterizes the direction in which a term
% $\ell_{y_i}(t)$ appearing in the above optimization problem
% is unbounded as we increase the amplitude of $t$.
% We also denote by $I_0(y, t) = 1 - I_c(y, t)$ its complementary
% indicator function.
% Let us example three examples to illustrate this.
\begin{example}[Binary logistic regression]
    \label{example:binary}
    Here, labels in $\mathcal Y=\{-1,1\}$ and the loss function 
    $$
    \ell_y(t) = \log(1+\exp(-yt)).
    $$
    % In this case,
    % $$
    % I_c(1, u) = \begin{cases}
    %     1 & \text{ if } u<0,
    %     \\
    %     0 & \text{ if } \ge0.
    % \end{cases}
    % $$
\end{example}

\begin{example}[Logistic regression with repeated measurements]
    \label{example:multinomial}
    Let $q\ge 2$.
    Here, $\mathcal Y=\{0,1,\dots,q\}$ and the loss function,
    corresponding to a binomial regression model with $q$
    throws and a sigmoid link function for the probability, is
    $$
    \ell_y(t) = q\log(1+\exp(t)) - y t.
    $$
    If $q=1$, this is equivalent to binary logistic regression
    by renaming $\{1,0\}$ to $\{1,-1\}$.
    % Here,
    % \begin{equation*}
    % I_c(0, u) = \begin{cases}
    %     1 & \text{ if } t>0,\\
    %     0 & \text{ if } t\le 0,
    % \end{cases}
    % \quad
    % I_c(q, u) = \begin{cases}
    %     1 & \text{ if } t<0,\\
    %     0 & \text{ if } t\ge 0,
    % \end{cases}
    % \qquad
    % I_c(y, u) = 
    %     1 \text{ if } y\in\{1,\dots q-1\}.
    % \end{equation*}
\end{example}

\begin{example}[Poisson regression]
    \label{example:poisson}
    If the labels are in $\mathcal Y=\{0,1,2,3,\dots\} =\mathbb N$,
    the non-negative likelihood of Poisson generalized linear model
    leads to the loss function
    $$
    \ell_y(t) = \exp(t) - y t.
    $$
    % Here,
    % $$
    % I_c(0, t) = \begin{cases}
    %     1 & \text{ if } t>0,\\
    %     0 & \text{ if } t\le 0,
    % \end{cases}
    % \qquad
    % I_c(y, t) = 1 \text{ if } y\ge 1.
    % $$
\end{example}

The phenomenon of the phase transition for the existence
of the M-estimator comes from the lack of coercivity 
of some of the loss functions
$\ell_{y_i}$ appearing in the optimization problem
\eqref{infimum}.
For instance, 
\begin{itemize}
\item
In the binary logistic regression case
(\Cref{example:binary}), the loss
$\ell_{y_i}$ is not coercive for all $y_i$:
it is increasing for $y_i=-1$ and decreasing for $y_i=1$.
\item
For binomial logistic regression with $q\ge 2$ measurements,
(\Cref{example:multinomial}), the loss
$\ell_{y_i}$ is coercive if $y_i\in\{1,...,q-1\}$,
increasing for $y_i=0$ and decreasing for $y_i=q$.
\item
    For Poisson regression, (\Cref{example:poisson}),
the loss $\ell_{y_i}$ is coercive for $y_i\ge 1$
and increasing for $y_i=0$.
\end{itemize}

The values of $y_i$ leading to a coercive, increasing or
decreasing loss $\ell_{y_i}(\cdot)$ 
and the distribution of $(\bx_i^\top \bw, y_i)$ will determine
the critical threshold $\delta_\infty$.
In order to study $\delta_\infty$, it will be thus useful
to introduce the following notation:
For a random variable $Y$ valued in $\mathcal Y$, we define
the events
$\Omega_\vee(Y), \Omega_\nearrow(Y), \Omega_\searrow(Y)$
by
\begin{align}\label{eq:def_Omega}
    \begin{split}
        \Omega_\vee(Y) &=  \{\text{$\ell_Y(\cdot)$ is coercive}\},\\
        \Omega_\nearrow(Y) &= \{\text{$\ell_Y$ is strictly increasing}\},\\
        \Omega_\searrow(Y) &= \{\text{$\ell_Y$ is strictly decreasing}\}.
    \end{split}
\end{align}

These events can be made explicit for the three examples thanks
to the discussion in the three bullet points above:

\begin{itemize}
\item
In the binary logistic regression case
(\Cref{example:binary}) we have
$\Omega_\nearrow(Y)=\{Y=-1\}$ and $\Omega_\searrow(Y)=\{Y=1\}$.
\item
In \Cref{example:multinomial},
$\Omega_\nearrow(Y)=\{Y=0\}$ as well as $\Omega_\searrow(Y)=\{Y=q\}$ 
and $\Omega_\vee(Y)=\{0<Y<q\}$.

\item
    For Poisson regression (\Cref{example:poisson}) we have
    $\Omega_\nearrow(Y)=\{Y=0\}$ and $\Omega_\vee(Y)=\{Y\ge 1\}$.
\end{itemize}

We will assume that $\ell_Y$ is strictly convex in our working
assumptions, so that if $\ell_Y$ is not
coercive it must be either strictly increasing or strictly decreasing.
We first state an assumption that prevents the problem from being trivial.

\begin{assumption}
    \label{assum:so_not_trivial}
    The loss $\ell_y$ is strictly convex for every $y\in\mathcal Y$,
    and the law of $Y$ satisfies
    \begin{align*}
        \begin{split}
            &\E\Bigl[U^2 \bigl(I\{\Omega_\vee\} + I\{\Omega_\searrow, U>0\}+ I\{\Omega_\nearrow, U<0\}\bigr)\Bigr] > 0, \\
          & \E\Bigl[U^2 \bigl(I\{\Omega_\vee\} +  I\{\Omega_\searrow, U<0\} + I\{\Omega_\nearrow, U>0\}\bigr)\Bigr] > 0     
        \end{split}
    \end{align*}
    where we omit the argument $Y$ in the three events
    $\Omega_\vee(Y), \Omega_\nearrow(Y)$ and $\Omega_\searrow(Y)$
    for brevity.
\end{assumption}

This prevents the problem from being trivial in the following sense.
Consider $p=1$, and write $x_i=U_i$ (which is now scalar valued).
Assume that the first line in \Cref{assum:so_not_trivial}
is 0. 
The minimization problem \eqref{eq:infinite_dimensional_optimization}
becomes
$$
\inf_{a\in\R} \sum_{i=1}^n \ell_{y_i}(a U_i).
$$
Since $\PP(U^2>0)=1$ by $U\sim N(0,1)$, this implies $\PP(\Omega_\vee(Y))=0$,
so for each of the $n$ terms, the loss $\ell_{y_i}(\cdot)$
is not coercive. 
In fact, each term is increasing in $a$, because
$U_i > 0 \Rightarrow \Omega_\nearrow(y_i)$
and
$U_i < 0 \Rightarrow \Omega_\searrow(y_i)$
by the first line in \Cref{assum:so_not_trivial},
so in this case $a\mapsto\sum_{i=1}^n \ell_{y_i}(a U_i)$ is increasing
with probability one and the infimum is never attained.
Similarly, if the second line in \Cref{assum:so_not_trivial}
is zero, then 
$a\mapsto\sum_{i=1}^n \ell_{y_i}(a U_i)$ is decreasing and the
infimum is never attained.

In conclusion, \Cref{assum:so_not_trivial} merely assumes that
for $p=1$, the infimum is achieved, i.e., the M-estimator exists, {with positive probability}.
If the M-estimator does not exist for $p=1$ then it will not exist
for $p>1$ either, so \Cref{assum:so_not_trivial} is required
to avoid this trivial case that the M-estimator does not exist for all $p\ge
1$.
The remaining of our working assumptions are given below.

\begin{assumption}\label{assumption}
    The loss $\ell_Y$ satisfies the following:
\begin{enumerate}
    \item For all $y\in \mathcal{Y}$, $\ell_y:\R\to \R$ is $C^1$, strictly convex, and not constant. 
    \item $\E[|\inf_u\ell_Y(u)|] <+\infty$ and  $\E[|\ell_Y(G)|]<+\infty$ where $G\sim N(0,1)$ independent of $Y$. 
    % \item \Cref{assum:so_not_trivial} is satisfied.
    \item $\PP(\Omega_\vee)<1$. 
    \item There exists a positive constant $b$ and $\sigma(Y)$-measurable positive random variable $D(Y)$ satisfying $\E[D(Y)]<+\infty$ and $\E[D^2(Y)\Omega_{\vee}(Y)]<+\infty$ 
     such that 
    \begin{align*}
        \forall u\in\R, \quad 
        \ell_Y(u) \ge -D(Y) + \frac{1}{b} \times  
        \begin{cases}
            u  &  \text{under }\Omega_{\nearrow}\\
            |u|& \text{under }\Omega_{\vee}\\ 
            -u &  \text{under }\Omega_{\searrow}
    \end{cases}
    \end{align*}
\end{enumerate}
\end{assumption}

%  Let us define the cone $\mathcal{C}(Y, U)\subset \R$ as
%  $$
% p \in \mathcal{C}(Y, U) \Leftrightarrow \exists t\in \R \text{ such that } tU + p =  \begin{cases}
%     \le 0 &\text{under } \Omega_{\nearrow}(Y)\\
%     = 0 & \text{under } \Omega_{\vee}(Y)\\
%     \ge 0 & \text{under } \Omega_{\searrow}(Y)
% \end{cases} 
%  $$

%  Let $\mh$ be the Hilbert space which consists of measurable function of $(G, Y, U)$ with a finite second moment. 
% Define a cone $\mathcal{C}\subset \R\times \mh$ as 
%  \begin{align*}
%     (t, p)\in \mathcal{C} \overset{\text{def}}{\Leftrightarrow}
%      (p+tU) \begin{cases}
%          \le 0  &\text{under }\Omega_{\nearrow}(Y)\\
%          \ge 0  &\text{under }\Omega_{\searrow}(Y)\\
%          = 0  & \text{under }\Omega_{\vee}(Y)\\
%      \end{cases}
%  \end{align*}
Beyond \Cref{assum:so_not_trivial},
\Cref{assumption}
requires differentiability of the loss (item 1),
mild integrability conditions (item 2),
that $\ell_{y_i}$ is not always coercive (item 3)
and 
that in the directions where $\ell_Y$ diverges to $+\infty$,
it does so at least as fast as an affine function with slope
$1/b$ and squared integrable intercept $D(Y)$ (item 4).

We define the critical threshold $\delta_\infty\in (0, +\infty]$ by
\begin{align}\label{eq:threshold_definition}
    \frac{1}{\delta_\infty} \coloneq  \inf_{t\in\R} \varphi(t),
\end{align}
where $\varphi:\R\to\R_{\ge 0}$ is the convex function defined as  
\begin{align*}
    \varphi(t) \coloneq & \phantom{+} 
    \E\Bigl[\Bigl(G+Ut\Bigr)^2 I\{\Omega_\vee\}\Bigr] \\
    &
+ \E\Bigl[\Bigl(G+Ut\Bigr)_{+}^2 I\{\Omega_\nearrow\}\Bigr]\\
&
+ \E\Bigl[\Bigl(G+Ut\Bigr)_{-}^2 I\{\Omega_\searrow\}\Bigr]. 
\end{align*}
Here, the positive part of any real $a$ is denoted $a_+=\max(0,a)$
and the negative part $a_-=\max(-a, 0)$ {
and the square is always taken after the positive/negative parts, i.e., $a_+^2
= (a_+)^2$ and $a_-^2 = (a_-)^2$.
}
If $\inf_{t\in\R} \varphi(t)=0$
then $\delta_\infty$ is interpreted as $\delta_\infty=+\infty$. 
Above, the infimum over $t\in\R$ always admits a minimizer $t_*\in\R$ under \Cref{assum:so_not_trivial} (see \Cref{lemma:threshold_exist}).

Our first result is that the threshold $\delta_\infty$ characterizes the phase transition regarding whether the M-estimator exists or not with high-probability under the proportional regime $n/p\to\delta$. 
\begin{theorem}\label{theorem:phase_transition_conic_geometry}
    As $n, p\to +\infty$ with $n/p\to \delta$, we have 
\begin{align*}
    \PP\left(\parbox{10em}{\ The M-estimator exists, \\
    \ i.e., the inf \eqref{infimum} is attained}\right) 
    \to \begin{cases}
        1 & \text{if }\delta > \delta_\infty, \\
        0 & \text{if }\delta < \delta_{\infty}.
    \end{cases}
\end{align*}
\end{theorem}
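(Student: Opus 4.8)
The plan is to turn the statement into a geometric question — whether a random $p$-dimensional subspace of $\R^n$ meets a fixed polyhedral cone only at the origin — and to estimate the relevant probability with Gordon's Gaussian min--max comparison (the CGMT). Let $X\in\R^{n\times p}$ have rows $\bx_i^\top$, and let $K=\{\bm{p}\in\R^n:\ p_i=0\text{ on }\Omega_\vee(y_i),\ p_i\le 0\text{ on }\Omega_\nearrow(y_i),\ p_i\ge 0\text{ on }\Omega_\searrow(y_i)\}$; every index $i$ lies in exactly one of the three events since $\ell_{y_i}$ is strictly convex. Because $n>p$ eventually, $X$ is a.s. injective and $\bb\mapsto\sum_i\ell_{y_i}(\bx_i^\top\bb)$ is a.s. strictly convex, and by items~2 and~4 of \Cref{assumption} it is bounded below. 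Its recession function equals $\bm{d}\mapsto\sum_i\ell_{y_i}^\infty(\bx_i^\top\bm{d})$; each summand is $\ge 0$ (boundedness below), and vanishes exactly when $\bx_i^\top\bm{d}=0$ under $\Omega_\vee$, $\bx_i^\top\bm{d}\le 0$ under $\Omega_\nearrow$, $\bx_i^\top\bm{d}\ge 0$ under $\Omega_\searrow$ — the strict positivity in the complementary directions using the affine lower bound of item~4. Since a strictly convex, bounded-below function attains its infimum iff it has no nonzero direction of recession, this gives
$$\text{the M-estimator exists}\ \Longleftrightarrow\ \mathrm{range}(X)\cap K=\{\bm{0}\}\ \Longleftrightarrow\ \min_{\|\bm{d}\|_2=1}\dist(X\bm{d},K)^2>0 .$$

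Next I would decompose and condition. Write $\bx_i=U_i\bw+\bg_i$ with $\bg_i=(I_p-\bw\bw^\top)\bx_i$, so $(U_i,y_i)_i\indep(\bg_i)_i$ and $X\bm{d}=t\bu+G\bm{d}_\perp$ where $t=\bw^\top\bm{d}$, $\bm{d}_\perp=(I_p-\bw\bw^\top)\bm{d}$, $\bu=(U_i)_i$, and $G$ has rows $\bg_i^\top$. Conditioning on $(\bu,Y)$ fixes $K$ and leaves $G$ a fresh Gaussian matrix whose range is uniform and independent of $(\bu,Y)$. Using $\dist(\bm{z},K)^2=\max_{\bm{q}\in K^\circ}\{2\bm{q}^\top\bm{z}-\|\bm{q}\|_2^2\}$, the quantity above becomes a min--max in which $G$ enters only through the bilinear form $\bm{q}^\top G\bm{d}_\perp$, as required by Gordon's inequality. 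To make the comparison usable in \emph{both} directions I would replace the nonconvex constraint $\|\bm{d}\|_2=1$ by the affine normalization $\langle\bb_0,X\bm{d}\rangle=1$, where $\bb_0$ is a fixed vector with $\langle\bb_0,\bm{z}\rangle>0$ for all $\bm{z}\in K\setminus\{\bm{0}\}$ (such $\bb_0$ exists because $K$ is pointed, since $\Omega_\vee\cup\Omega_\nearrow\cup\Omega_\searrow$ exhausts $[n]$), together with an a priori bound $\|\bm{d}\|_2\le C$; the latter holds with high probability because the slice $\{\bm{z}\in K:\langle\bb_0,\bm{z}\rangle=1\}$ is bounded and $\sigma_{\min}(X)\gtrsim\sqrt{n}-\sqrt{p}$. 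This yields a convex compact feasible set in $\bm{d}$ without changing the dichotomy ``the M-estimator exists $\Longleftrightarrow$ the value is $>0$'', and the equality constraint is then absorbed into a Lagrange multiplier, keeping the problem convex--concave and $G$ bilinear.

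Now apply the CGMT: replace $\bm{q}^\top G\bm{d}_\perp$ by $\|\bm{d}_\perp\|_2\,\bm{g}^\top\bm{q}+\|\bm{q}\|_2\,\bm{h}^\top\bm{d}_\perp$ with $\bm{g}\sim N(0,I_n)$, $\bm{h}\sim N(0,I_{p-1})$ independent. Optimizing out the direction of $\bm{d}_\perp$ and the radial part of $\bm{q}$ collapses the auxiliary optimization to a scalar problem over $t$ of the form $\min_{t}\big(\|\Pi_{K^\circ}(t\bu+\bm{g})\|_2-\|\bm{h}\|_2\big)_+^2$, the positive part arising from the radial maximization $\max_{r\ge 0}(2rs-r^2)=s_+^2$. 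By the law of large numbers $\tfrac1n\|\Pi_{K^\circ}(t\bu+\bm{g})\|_2^2=\tfrac1n\big[\sum_{\Omega_\vee}(tU_i+g_i)^2+\sum_{\Omega_\nearrow}(tU_i+g_i)_+^2+\sum_{\Omega_\searrow}(tU_i+g_i)_-^2\big]\to\varphi(t)$, uniformly over a compact set containing the minimizer $t_*$ of $\varphi$ (which exists by \Cref{lemma:threshold_exist}), while $\tfrac1n\|\bm{h}\|_2^2\to 1/\delta$. Hence $\tfrac1n\times(\text{auxiliary value})\to\big(\sqrt{\inf_t\varphi(t)}-\sqrt{1/\delta}\big)_+^2=\big(\sqrt{1/\delta_\infty}-\sqrt{1/\delta}\big)_+^2$, and — crucially — once $\varphi(t_*)<1/\delta$, i.e. $\delta<\delta_\infty$, the positive-part structure forces the auxiliary value to equal $0$ with high probability. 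Consequently: if $\delta>\delta_\infty$, the ``always valid'' direction of Gordon's inequality transfers the strictly positive auxiliary lower bound to the primal value, so $\min_{\|\bm{d}\|_2=1}\dist(X\bm{d},K)^2>0$ w.h.p. and the M-estimator exists; if $\delta<\delta_\infty$, the convex--concave direction of Gordon's inequality — available thanks to the reformulation above — forces the primal value to be $0$ w.h.p., i.e. $\mathrm{range}(X)\cap K\ne\{\bm{0}\}$ and the M-estimator does not exist. Removing the conditioning on $(\bu,Y)$ finishes the proof.

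The delicate part is the non-existence side ($\delta<\delta_\infty$): one needs the primal value to be \emph{exactly} $0$, not merely small, which is why the convex--concave direction of Gordon's inequality is needed and hence why the whole reformulation (the $\bb_0$-normalization, the a priori norm bound, the Lagrangian) cannot be avoided. A second, conceptual obstacle is that $\mathrm{range}(X)$ is \emph{not} independent of $K$ — the responses $y_i$ depend on $U_i=\bx_i^\top\bw$ — so one cannot invoke a conic kinematic formula directly; the correlation is isolated exactly by the extra scalar variable $t$, which is what replaces $\varphi(0)$ by $\inf_t\varphi(t)$ and explains the term $Ut$ in the definition of $\varphi$.
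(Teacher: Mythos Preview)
Your route via CGMT is genuinely different from the paper's, and your closing remark --- that one ``cannot invoke a conic kinematic formula directly'' because $\mathrm{range}(X)$ is correlated with $K$ --- is precisely where the two diverge, and it is mistaken. The paper does invoke the kinematic formula of \citet{amelunxen2014living}. After rotating so that $\bw=\bm e_1$, it rewrites non-existence as $\operatorname{Span}(X\bm e_2,\dots,X\bm e_p)\cap\mathcal C(X\bm e_1,\by)\ne\{\bm 0\}$: the correlated scalar $t=\bw^\top\bm d$ that you isolate is folded into the \emph{cone} $\mathcal C(X\bm e_1,\by)$ rather than kept as an extra optimization variable, and the remaining $(p-1)$-dimensional subspace $\operatorname{Span}(X\bm e_2,\dots,X\bm e_p)$ is then uniformly random and \emph{independent} of the cone. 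The kinematic formula delivers both sides of the phase transition at once, with threshold governed by $n^{-1}\statdim(\mathcal C)\to 1-\delta_\infty^{-1}$, which the paper computes exactly as you do (projecting $\bg$ onto the cone and minimizing over $t$). This is shorter and avoids the convexification machinery entirely.

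On your CGMT argument itself: the existence side $\delta>\delta_\infty$ is fine, but the non-existence side has a real gap. The convex--concave direction of CGMT gives $\PP(\text{primal}>c)\le 2\,\PP(\text{aux}\ge c)$ for each $c$; with $c=0$ this is trivial (both sides are $\ge 0$), and with $c>0$ it only yields primal $\to 0$ in probability, not primal $=0$ w.h.p. One can repair this by taking $c\downarrow 0$ at fixed $n$ to get $\PP(\text{primal}>0)\le 2\,\PP(\text{aux}>0)$, but that step is missing. Separately, your normalization $\langle\bb_0,X\bm d\rangle=1$ depends on $G$; after Lagrangianizing, the bilinear term becomes $(2\bm q+\lambda\bb_0)^\top G\bm d_\perp$, so the max in CGMT is over $(\bm q,\lambda)\in K^\circ\times\R$, both unbounded, and the compactness required for the convex--concave direction is lost --- you would still need a truncation-and-limit argument that is not sketched.
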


\begin{figure}
    \centering
    \includegraphics[width=0.4\textwidth]{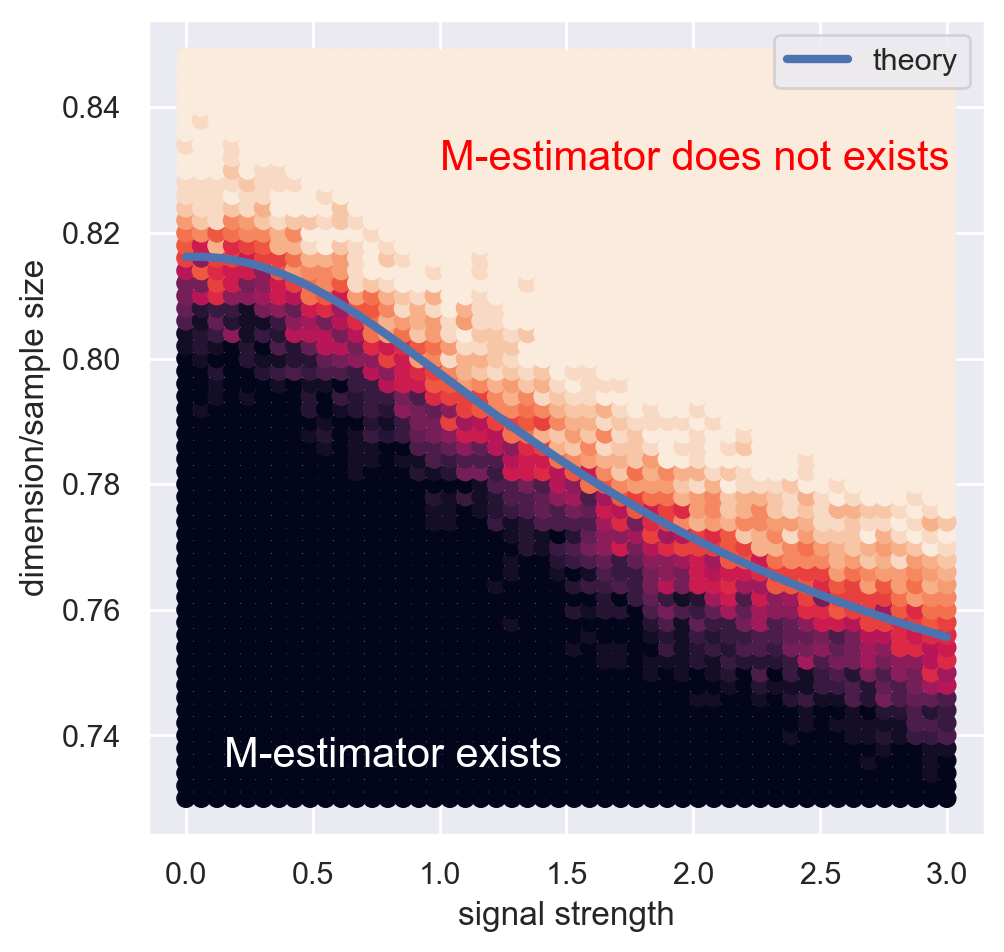}
    \caption{Count of instances where the minimizer in \eqref{infimum} exists for varying $p/n$ and signal strength. 
        Simulation parameter: $n=1500$, $20$ repetitions, $\ell_y(u)=e^u-yu$
    is the Poisson loss, $y_i\mid \bx_i$ satisfies the Poisson
model \eqref{poisson_model}.}
    \label{fig:poisson_phase_transition}
\end{figure}
\Cref{theorem:phase_transition_conic_geometry} is numerically verified by \Cref{fig:poisson_phase_transition} for Poisson regression (see \Cref{sec:simulation} for details).
The proof of \Cref{theorem:phase_transition_conic_geometry} is based on a common argument based on conic geometry and the Gaussian kinematic formula given by \citet{amelunxen2014living}. This use of the kinematic formula
of \citet{amelunxen2014living} is similar to the 
argument in \citet{candes2020phase} for the binary logistic regression model.

A more technical question, that requires an investigation
beyond the application of the kinematic formula of \citet{amelunxen2014living},
is whether the critical threshold also characterizes
the phase transition
regarding the existence of solution to the nonlinear system 
\eqref{eq:threshold_definition}. 
A formal proof of 
existence of a unique solution to \eqref{eq:threshold_definition}
is important, as it is required to leverage the Convex Gaussian
Minmax Theorem (CGMT) of \citet{thrampoulidis2018precise}.
For instance, the works \citet{salehi2019impact,loureiro2021learning}
which apply the CGMT in generalized linear models to study
$\hat{\bb}$, assume in their theorems that the system \eqref{eq:system}
admits a unique solution.

\begin{theorem}\label{theorem:phase_transition_system}
%  Let $\delta_\infty$ be the threshold defined in \eqref{eq:threshold_definition}. Then we have the following:
\hspace{4mm}
\begin{itemize}
    \item If $\delta\le \delta_{\infty}$, the system \eqref{eq:system}
    has no solution.
    \item If $\delta > \delta_{\infty}$, the system \eqref{eq:system}
    has a unique solution.
\end{itemize}
\end{theorem}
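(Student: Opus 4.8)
\emph{Plan.} I would prove the two halves by different means: ``no solution for $\delta\le\delta_\infty$'' by a soft Cauchy--Schwarz estimate, and ``a unique solution for $\delta>\delta_\infty$'' by exhibiting a low-dimensional strictly convex--concave saddle-point problem whose saddle points are exactly the solutions of \eqref{eq:system}, and analysing when that problem is coercive. For the first half, write $Z=aU+\sigma G$ and $\kappa:=Z-\prox[\gamma\ell_Y](Z)=\gamma\,\ell_Y'(\prox[\gamma\ell_Y](Z))$. Using this optimality identity and Gaussian integration by parts in $G$ (legitimate by \Cref{assumption}, items~2 and~4), one checks that \eqref{eq:system} is \emph{equivalent} to $\E[\kappa^2]=\sigma^2/\delta$, $\E[U\kappa]=0$, $\E[G\kappa]=\sigma/\delta$, and hence $\E[(tU+G)\kappa]=\sigma/\delta$ for every $t\in\R$. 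Since $\ell_Y$ is strictly increasing on $\Omega_\nearrow$ and strictly decreasing on $\Omega_\searrow$, we have $\kappa>0$ a.s.\ on $\Omega_\nearrow$ and $\kappa<0$ a.s.\ on $\Omega_\searrow$; bounding $(tU+G)\kappa$ by $|tU+G|\,|\kappa|$, $(tU+G)_+\kappa$, $-(tU+G)_-\kappa$ on $\Omega_\vee,\Omega_\nearrow,\Omega_\searrow$ respectively, then applying Cauchy--Schwarz on each event and once more to the sum of the three products, gives $\sigma/\delta=\E[(tU+G)\kappa]\le\sqrt{\varphi(t)}\,\sqrt{\E[\kappa^2]}=\sqrt{\varphi(t)}\,\sigma/\sqrt\delta$, so $1/\delta\le\varphi(t)$ for all $t$, i.e.\ $\delta\ge\delta_\infty$. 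If $\delta=\delta_\infty$, all these inequalities must be equalities at the minimizer $t_\ast$ of $\varphi$ (\Cref{lemma:threshold_exist}), forcing $\kappa$ to be a nonnegative multiple of $(t_\ast U+G)_+$ on $\Omega_\nearrow$; this is impossible since $\kappa>0$ a.s.\ there while $(t_\ast U+G)_+=0$ on a set of positive probability, and $\PP(\Omega_\nearrow)+\PP(\Omega_\searrow)=1-\PP(\Omega_\vee)>0$ by \Cref{assumption}(3) (an analogous contradiction uses $\Omega_\searrow$). Hence \eqref{eq:system} has no solution when $\delta\le\delta_\infty$.

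\emph{The scalar saddle-point problem.} For $\delta>\delta_\infty$ I would study
\[
F(a,\sigma,\tau)=-\frac{\sigma\tau}{2\sqrt\delta}+\E\!\left[\ \inf_{x\in\R}\Bigl(\ell_Y(x)+\tfrac{\sqrt\delta\,\tau}{2\sigma}(x-aU-\sigma G)^2\Bigr)\right],\qquad (a,\sigma,\tau)\in\R\times\R_{>0}\times\R_{>0},
\]
which up to normalization is the deterministic limit of the auxiliary optimization produced from \eqref{infimum} by the CGMT. The inner integrand is affine in $\tau$, so $F$ is concave in $\tau$; and $(x-aU-\sigma G)^2/\sigma$ is the perspective of a square, so $F$ is jointly convex in $(a,\sigma)$; strict convexity of each $\ell_y$ upgrades this to strict convexity--concavity on the interior of the domain. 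A direct computation of $\partial_aF=\partial_\sigma F=\partial_\tau F=0$ (the $\tau$-derivative of the inner infimum taken by Danskin's theorem) shows that the interior saddle points of $\min_{a\in\R,\sigma\ge0}\max_{\tau>0}F$ correspond, via $\gamma=\sigma/(\sqrt\delta\,\tau)$, precisely to the solutions of the reduced system of the first paragraph; together with that paragraph this puts the solutions of \eqref{eq:system} in bijection with the interior saddle points of $F$, so uniqueness will follow from strict convexity--concavity once existence is established.

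\emph{Existence via coercivity.} It remains to show that $\psi:=\max_{\tau>0}F$ attains its infimum at a point with $\sigma>0$ exactly when $\delta>\delta_\infty$. Carrying the supremum over $\tau$ inside $F$ gives the dual representation $\psi(a,\sigma)=\sup_{r}\{\E[(aU+\sigma G)r]-\E[\ell_Y^{\ast}(r)]-\tfrac{\sigma}{\sqrt\delta}\|r\|_{L^2}\}$, the supremum over square-integrable $r=r(U,Y,G)$, where $\ell_Y^\ast$ is the convex conjugate and its effective domain forces $r$ into the cone $\mathcal K=\{r:\ r\ge0\text{ on }\Omega_\nearrow,\ r\le0\text{ on }\Omega_\searrow\}$. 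Along the escape direction $(a,\sigma)=\sigma(t,1)$, choosing an $r\in\mathcal K$ of small norm aligned with the $L^2$-projection $\Pi_{\mathcal K}(tU+G)$ (truncated into the effective domain at negligible cost) yields $\psi(\sigma t,\sigma)\ge\sigma\bigl(\sqrt{\varphi(t)}-1/\sqrt\delta\bigr)\|r\|_{L^2}-O(1)$, since $\|\Pi_{\mathcal K}(tU+G)\|_{L^2}^2=\varphi(t)$; this tends to $+\infty$ when $\varphi(t)>1/\delta$. Combined with \Cref{assum:so_not_trivial}, which makes $a\mapsto\E[\ell_Y(aU)]=\psi(a,0)$ coercive and thus controls the boundary directions $(a,\sigma)=(\pm1,0)$, and with the fact (\Cref{lemma:threshold_exist}) that $\varphi$ attains its minimum, so that $\delta>\delta_\infty$ gives $\varphi(t)\ge\inf_s\varphi(s)>1/\delta$ for \emph{all} $t$, this makes $\psi$ coercive, hence its infimum is attained. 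The minimizer has $\sigma>0$ because $\partial_\sigma\psi$ at $\sigma=0^+$ equals $-(1/\sqrt\delta)(\E[\ell_Y'(aU)^2])^{1/2}<0$, with $\E[\ell_Y'(aU)^2]>0$ since $\PP(\Omega_\vee)<1$ forces $\ell_Y'(aU)\ne0$ on a set of positive probability. Finally the affine lower bound in \Cref{assumption}(4), with items~2--3, keeps $\max_{\tau>0}F$ and $\min_{a,\sigma}F$ attained at interior points, so for $\delta>\delta_\infty$ a saddle point of $F$ exists and, being unique, yields the unique solution of \eqref{eq:system}.

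\emph{Main obstacle.} The delicate part is the coercivity analysis of $\psi$: pinning down the exact leading-order behaviour of $\psi$ along every ray, recognizing $\varphi(t)$ as $\|\Pi_{\mathcal K}(tU+G)\|_{L^2}^2$, verifying that truncating the near-projection into the effective domain of $\ell_Y^\ast$ really costs only $O(1)$, and justifying the differentiation-under-the-expectation and the boundary limits --- which is exactly where the growth condition \Cref{assumption}(4) is used. The non-existence argument and the convex-analytic bijection between saddle points and solutions are, by comparison, routine.
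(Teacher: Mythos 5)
Your first bullet (no solution when $\delta\le\delta_\infty$) is correct and genuinely different from the paper's route: the paper deduces non-existence by passing through the equivalence \Cref{theorem:equivalence_system_optimization} and the no-minimizer argument of \Cref{lm:no_minimizer}, whereas your reduction of \eqref{eq:system} to $\E[\kappa^2]=\sigma^2/\delta$, $\E[U\kappa]=0$, $\E[G\kappa]=\sigma/\delta$ (this is plain algebra using $\E[G^2]=1$, $\E[GU]=0$; no Gaussian integration by parts is needed) followed by Cauchy--Schwarz against the sign structure of $\kappa$ gives $1/\delta\le\varphi(t)$ for all $t$ directly, and the equality analysis at $\delta=\delta_\infty$ (forcing $(t_*U+G)_-=0$ a.s.\ on $\Omega_\nearrow$, impossible since $G\mid(U,Y)$ is Gaussian and $\PP(\Omega_\nearrow)+\PP(\Omega_\searrow)>0$) closes that case. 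This is a nice, more elementary argument than the paper's.

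The second bullet, however, has genuine gaps at exactly the points the paper devotes its hardest lemmas to. (i) \emph{Non-degeneracy at $\sigma=0$}: you claim $\partial_\sigma\psi(a,0^+)=-(1/\sqrt\delta)\,(\E[\ell_Y'(aU)^2])^{1/2}<0$. But $\psi$ is a supremum over $r$, so evaluating the objective at the maximizer $r^*=\ell_Y'(aU)$ of the $\sigma=0$ problem only yields a \emph{lower} bound on the right derivative, $\partial_\sigma^+\psi\ge -\|r^*\|/\sqrt\delta$; to exclude a minimizer on the boundary $\{\sigma=0\}$ you need an \emph{upper} bound of the form $\psi(a,\sigma)\le\psi(a,0)-c\sigma$ for small $\sigma$, i.e.\ control over all competing $r$, and you must also handle the case $\E[\ell_Y'(aU)^2]=+\infty$. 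This is precisely what the paper's \Cref{lm:nonzero} establishes via the Moreau-envelope/monotone-convergence argument; as written your step does not follow. (ii) \emph{Uniqueness}: ``strict convexity of $\ell_y$ upgrades to strict convexity--concavity'' is not automatic. The perspective term $(x-aU-\sigma G)^2/\sigma$ is affine along rays, and the objective is strictly convex only through the composite $aU+\sigma G$; this is exactly why the paper cannot conclude $a=a'$ from strict convexity alone and needs the separate argument of \Cref{lm:unique} (binding constraint plus the quadratic-in-$t$ identity). A rescue is plausible (e.g.\ using strict monotonicity of $\prox[\gamma\ell_Y]$ in $G$ to kill the degenerate proportionality directions after taking expectations), but it must be written; likewise strict concavity or uniqueness in $\tau$ is unaddressed. (iii) \emph{Interior attainment in $\tau$}: your correspondence with \eqref{eq:system} requires a saddle with $\tau_*\in(0,\infty)$, but a priori the sup over $\tau$ could be approached only as $\tau\to0^+$ (value $\E[\inf_u\ell_Y(u)]$), in which case no saddle exists; ruling this out is the analogue of the strict positivity of the Lagrange multiplier in \Cref{lagrange} and is asserted rather than proved. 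Finally, the interchange of $\sup_r$ with the expectation and the truncation of $r$ into $\operatorname{dom}\ell_Y^*$ ``at $O(1)$ cost'' in the coercivity bound need justification (for e.g.\ the logistic loss $\operatorname{dom}\ell_Y^*$ is a bounded interval). So the scalar CGMT-saddle route is a reasonable alternative skeleton, but as it stands the existence--uniqueness half is not a proof.
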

To our knowledge, a proof of this relationship between
the critical threshold $\delta_\infty$ and the existence of a
solution to the system \eqref{eq:system} is new, even in the 
case of binary logistic regression. The global null case was handled
in \cite{sur2019likelihood};
however, outside the global null case,
this phenomenon was observed numerically in \cite{sur2018modern} without
proof---see the discussion after eq. (16) of the supplement of
\cite{sur2018modern}.

{
\citet{montanari2019generalization} generalized the threshold
$\delta_\infty$ of \citet{candes2020phase}
for linear separation in binary classification (or equivalently
for the existence of \eqref{eq:equivalent_condition_mle}
with the logistic loss), allowing 
an arbitrary single-index model for $\PP(y_i=1\mid \bx_i)$.
\citet{montanari2019generalization} further established
the existence of a unique solution
to a system governing the behavior of max-margin classifiers
for any $\delta<\delta_\infty$, that is, for any $\delta$ such that
the data is linearly separable with high-probability.
We emphasize that the system studied in \citet{montanari2019generalization}
is different and complementary from  the system \eqref{eq:system} of interest
in the present paper: The system 
of \citet{montanari2019generalization} governs the behavior of max-margin
classifiers for $\delta<\delta_\infty$ while the system \eqref{eq:system} studied here governs
the behavior of the M-estimator \eqref{infimum} for which \Cref{theorem:phase_transition_system} establishes existence of solutions if and only if
$\delta > \delta_\infty$.
}

The tools we use to obtain \Cref{theorem:phase_transition_system}
are based on the existence of a solution
to a convex minimization problem in an infinite-dimensional Hilbert space
which is the focus of the next section.

\section{Infinite Dimensional Optimization Problem}
In this section, we define the mathematical objects
at the heart of the proof of \Cref{theorem:phase_transition_system}
and outline the proof strategy.
The key is the analysis of an infinite-dimensional convex optimization problem that is in a dual relationship with the nonlinear system. {
The use of such infinite-dimensional optimization problems to
prove the existence of solutions to nonlinear systems of equations
was pioneered in \citet{montanari2019generalization} and later
used in the context of boosting and L1 interpolation \citep{liang2022precise},
to analyse the Lasso \citep{celentano2020lasso},
and for robust regularized regression \cite{bellec2023existence}.
}

The notation and setup of this infinite-dimensional convex optimization
problem is heavily inspired by \citet{bellec2023existence},
which studies the existence of solutions to systems of a similar
nature to \eqref{eq:system} in robust regression.
In the robust regression setup of \citet{bellec2023existence}
where coercive and Lipschitz loss functions $\ell_{y_i}(\cdot)$ are considered,
the corresponding system has always a unique solution
and the M-estimator always exists: there is no phase transition.
A novelty of the present paper is to explain how these tools,
in particular the infinite-dimensional optimization below,
can be used to predict the phase transition for the existence
of a minimizer in \eqref{infimum} and the phase transition for the
existence of solutions to the system \eqref{eq:system}.

To begin with, let us consider the almost sure equivalent classes $\mh$
of squared integrable measurable functions of $(G, U, Y)$
$$
\mh=\{v:\R^3\to\R: \E[v(G, U, Y)^2]<+\infty\}, 
$$
where $G\sim N(0,1)$ and independent of $(U, Y)$. Here $(U, Y)$ is equal in distribution to $(y_i, \bx_i^\top\bw)$. 
Almost sure equivalence classes of $\mh$ form a Hilbert space 
equipped with the usual inner product $\langle u,v\rangle \coloneq \E[u(G,U, Y)v(G,U, Y)]$ and corresponding Hilbert norm
$\|v\| = \sqrt{\langle v, v\rangle}$.
We will sometimes refer to $\mh$ itself as the Hilbert space,
in this case we implicitly identify random variables
$v(G, U, Y)$ that are equal almost surely.
{
For brevity, inside an expectation and probability signs
with respect to the probability measure of $(G, U, Y)$,
we simply write $v$ to denote the random variable $v(G,U,Y)$.
For instance, we write simply $\E[vG]$ instead of $\E[v(G, U, Y)G]$.
}

Now we define two functions $\mg$ and $\ml$ as follows:
\begin{align*}
    \mg: \mh\to \R, \quad v\mapsto \|v\| - \E[vG]/\sqrt{1-\delta^{-1}}
\end{align*}
and 
\begin{align*}
    &\ml: \R\times \mh \to \R\cup \{+\infty\},\\
    &(a, v) \mapsto \begin{cases}
        \E[\ell_Y(aU+v)] & \text{if } \E[|\ell_Y(aU+v)|]<+\infty\\
        +\infty & \text{otherwise}
    \end{cases}
\end{align*}
Here, $\ml$ is a proper lower semicontinuous convex function, while $\mg$ is a Lipschitz, finite valued, and convex function (See \Cref{lm:objective_func_property} and \Cref{lm:constraint_property}). 
With these functions $(\ml, \mg)$, we claim that the system of nonlinear equations \eqref{eq:system} admits a unique solution if and only if the following infinite-dimensional convex optimization problem over $\R\times \mh$
\begin{align}\label{eq:infinite_dimensional_optimization}
    \min_{(a, v)\in\R\times \mh} \ml(a, v) \quad \text{subject to} \quad \mg(v) \le 0    
\end{align}
admits a unique minimizer $(a_*, v_*)\in \R\times \mh$. We will make this point more precise in the next paragraph. 

The key to such an equivalence between the nonlinear system \eqref{eq:system} and infinite-dimensional optimization problem \eqref{eq:infinite_dimensional_optimization} is the existence of the Lagrange multiplier associated with the constraint $\mg(v)\le 0$. 
By Proposition 27.31 of \citet{bauschke2017convex},
an element $(a_*, v_*)\in\R\times \mh$ solves the constrained optimization problem $\min_{(a,v): \mg(v)\le 0}\ml(v)$ if and only if there exists a Lagrange multiplier $\mu_*\ge 0$ such that the KKT condition 
\begin{align}\label{eq:KKT_main}
    \begin{split}
        -\mu_* \partial \mg(v_*) \cap \partial_v \ml(a_*, v_*) &\ne \emptyset\\
        \partial_a \ml(a, v)&\ni 0 \\
        \mu_*\mg(v_*)&=0\\
        \mg(v_*) &\le 0
    \end{split}
\end{align}
is satisfied, where $\partial \mg$ and $\partial\ml$ are the subdifferentials of the convex functions $\mg, \ml$. Furthermore, we will argue that the Lagrange multiplier $\mu_*$ is strictly positive. Combined with $\mu_*\mg(v_*)=0$ in the KKT condition \eqref{eq:KKT_main}, this means that $\mg(v_*)=0$, i.e., the constraint $\mg(v)\le 0$ is binding. 
{
Following \citet{bellec2023existence}, equipped with
}
this positive Lagrange multiplier $\mu_*>0$ and the binding condition $\mg(v_*)=0$, we establish the following equivalence between the minimizer of the optimization problem \eqref{eq:infinite_dimensional_optimization} and the solution to the nonlinear system of equations \eqref{eq:system}.

\begin{theorem}[Equivalence]\label{theorem:equivalence_system_optimization}
\hspace{4mm}
\begin{itemize}
    \item Suppose $(a_*, v_*)\in \R\times \mh$ solves the constrained optimization problem \eqref{eq:infinite_dimensional_optimization} with $\|v_*\|>0$. Let $\mu_*$ be the Lagrange multiplier satisfying the KKT condition. Let $(\gamma_*, \sigma_*)$ be the positive scalar defined by
    $$
    \sigma_* = \|v_*\|/\sqrt{1-\delta^{-1}} > 0, \quad \gamma_* = \|v_*\|/\mu_*>0.
    $$
    Then the pair $(a_*, \sigma_*, \gamma_*)$ solves the nonlinear system of equation. 

    \item  If $(a_*, \gamma_*, \sigma_*)\in \R \times \R_{>0}\times \R_{>0}$ satisfies the nonlinear system, letting 
    $$
    v_* = \prox[\gamma_* \ell_Y](a_*U + \sigma_*G)-a_*U 
    $$
    $(a_*, v_*)$ solves the optimization problem \eqref{eq:infinite_dimensional_optimization} with $\|v_*\| = \sigma_* \sqrt{1-\delta^{-1}}>0$ and the KKT condition \eqref{eq:KKT_main} is satisfied for $\mu_*=\sigma_*\sqrt{1-\delta^{-1}}/\gamma_*>0$. 
\end{itemize}
\end{theorem}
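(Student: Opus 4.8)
The plan is to verify each direction by explicitly writing out the KKT conditions \eqref{eq:KKT_main} and matching them term-by-term to the three equations of the nonlinear system \eqref{eq:system}. The central computational device is the identity characterizing the subdifferential of $\ml$: since $\ml(a,v)=\E[\ell_Y(aU+v)]$ and $\ell_Y$ is differentiable (item 1 of \Cref{assumption}), one expects $\partial_v\ml(a_*,v_*)=\{\ell_Y'(a_*U+v_*)\}$ and $\partial_a\ml(a_*,v_*)=\{\E[U\ell_Y'(a_*U+v_*)]\}$, subject to the integrability needed to interchange differentiation and expectation; and for $\mg(v)=\|v\|-\E[vG]/\sqrt{1-\delta^{-1}}$, at a point with $\|v_*\|>0$ one has $\partial\mg(v_*)=\{v_*/\|v_*\| - G/\sqrt{1-\delta^{-1}}\}$, a singleton. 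Thus when $\|v_*\|>0$ the first KKT inclusion becomes the equation
\[
\ell_Y'(a_*U+v_*) = -\mu_*\Bigl(\tfrac{v_*}{\|v_*\|} - \tfrac{G}{\sqrt{1-\delta^{-1}}}\Bigr).
\]
With the substitutions $\sigma_* = \|v_*\|/\sqrt{1-\delta^{-1}}$, $\gamma_* = \|v_*\|/\mu_*$, this rearranges to $\gamma_*\ell_Y'(a_*U+v_*) + v_* = \sigma_*G$, which (since $\prox[\gamma_*\ell_Y]$ is the resolvent of $\gamma_*\ell_Y'$) is exactly the statement $a_*U + v_* = \prox[\gamma_*\ell_Y](a_*U + \sigma_*G)$, i.e.\ $v_* = \prox[\gamma_*\ell_Y](a_*U+\sigma_*G) - a_*U$. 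This single identity is the hinge of both directions.

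\emph{First direction.} Given a solution $(a_*,v_*)$ of \eqref{eq:infinite_dimensional_optimization} with $\|v_*\|>0$, first invoke the already-announced facts that a Lagrange multiplier $\mu_*\ge 0$ exists (Proposition 27.31 of \citet{bauschke2017convex}) and that $\mu_*>0$ (so $\mg(v_*)=0$, the binding constraint). The binding constraint $\mg(v_*)=0$ reads $\|v_*\| = \E[v_*G]/\sqrt{1-\delta^{-1}}$; combined with the prox identity above and $\E[G\cdot(\text{function of }(U,Y))]=0$, this should reduce, after a Gaussian integration-by-parts (Stein's identity) applied to $\E[v_*G]$, to the third equation $\sigma_*(1-\delta^{-1}) = \E[G\prox[\gamma_*\ell_Y](a_*U+\sigma_*G)]$. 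The second KKT line $0\in\partial_a\ml$ gives $\E[U\ell_Y'(\prox[\gamma_*\ell_Y](a_*U+\sigma_*G))]=0$, which is the second system equation. Finally, the first system equation — $\gamma_*^{-2}\delta^{-1}\sigma_*^2 = \E[\ell_Y'(\prox\cdots)^2]$ — should come from squaring the prox identity $\ell_Y'(a_*U+v_*) = -\mu_*(v_*/\|v_*\| - G/\sqrt{1-\delta^{-1}})$, taking expectations, using $\|v_*/\|v_*\|\|=1$, the binding relation, and $\E[G^2]=1$ to collapse the cross terms; the normalization $\mu_*^2/\|v_*\|^2 = \gamma_*^{-2}$ and $\|v_*\|^2/(1-\delta^{-1}) = \sigma_*^2$ then produce exactly $\gamma_*^{-2}\delta^{-1}\sigma_*^2$. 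One must also record that $\gamma_*,\sigma_*>0$, which is immediate from $\mu_*>0$ and $\|v_*\|>0$.

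\emph{Second direction.} Given a solution $(a_*,\gamma_*,\sigma_*)$ of the system with $\gamma_*,\sigma_*>0$, set $v_* = \prox[\gamma_*\ell_Y](a_*U+\sigma_*G) - a_*U$ and $\mu_* = \sigma_*\sqrt{1-\delta^{-1}}/\gamma_*>0$. The work is to reverse-engineer: show $v_*\in\mh$ (square-integrability, using the firm nonexpansiveness of the prox together with item 2 and item 4 of \Cref{assumption}); compute $\|v_*\|^2$ from the first system equation plus the prox identity to get $\|v_*\|=\sigma_*\sqrt{1-\delta^{-1}}>0$; verify $\mg(v_*)=0$ using the third equation and Stein; verify the first KKT inclusion is the prox identity read backwards; and verify $0\in\partial_a\ml(a_*,v_*)$ from the second equation. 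Then convexity of \eqref{eq:infinite_dimensional_optimization} (lower semicontinuity and convexity of $\ml$, convexity of $\mg$, from \Cref{lm:objective_func_property}–\Cref{lm:constraint_property}) together with the satisfied KKT conditions certifies that $(a_*,v_*)$ is a global minimizer.

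\emph{Main obstacle.} I expect the delicate part to be the rigorous computation of the subdifferentials of $\ml$ — specifically, justifying $\partial_v\ml(a_*,v_*)=\{\ell_Y'(a_*U+v_*)\}$ and the interchange of $\partial_a$ with the expectation — because $\ell_Y$ is only $C^1$ (not $C^2$), is not globally Lipschitz, and need not be coercive, so one cannot appeal to dominated convergence cavalierly; the one-sided affine lower bound in item 4 of \Cref{assumption} and the integrability in item 2 are exactly what should be leveraged to control the difference quotients (e.g.\ via monotonicity of $\ell_Y'$ and a sandwich argument), but making this airtight in the Hilbert-space setting, including checking that $\ell_Y'(a_*U+v_*)\in\mh$, is the real content. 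A secondary technical point is ensuring $\prox[\gamma_*\ell_Y]$ is well-defined and $1$-Lipschitz here even though $\ell_Y$ may be merely increasing/decreasing rather than coercive — but since $\gamma_*>0$ and $\ell_Y$ is convex, $u\mapsto \gamma_*\ell_Y(u)+\tfrac12(u-z)^2$ is strongly convex hence has a unique minimizer, so this is routine once stated.
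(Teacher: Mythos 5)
Your proposal is correct and takes essentially the same route as the paper: identify $v_*$ with $\prox[\gamma_*\ell_Y](a_*U+\sigma_*G)-a_*U$ via the KKT inclusion $-\mu_*\nabla\mg(v_*)\in\partial_v\ml(a_*,v_*)$, then obtain the three system equations from the binding constraint $\mg(v_*)=0$, $\E[UG]=0$, and the scalings $\sigma_*=\|v_*\|/\sqrt{1-\delta^{-1}}$, $\gamma_*=\|v_*\|/\mu_*$, and conversely verify the KKT conditions for the candidate $(a_*,v_*,\mu_*)$ and conclude by convex sufficiency. Two minor corrections: the ``main obstacle'' you flag is already dispatched by \Cref{lm:objective_func_property} (Proposition 16.63 of Bauschke--Combettes gives $\partial_v\ml=\partial\ell_Y(aU+v)\cap\mh$ and $\partial_a\ml=\{\E[Uh]:h\in\partial\ell_Y(aU+v)\}\cap\R$, so no interchange-of-differentiation argument is needed, and square-integrability of the prox is \Cref{lm:prox_in_h}), and no Stein/Gaussian integration by parts is needed anywhere---the third equation follows from $\E[G(v_*+a_*U)]=\E[Gv_*]$ plus the binding constraint, while in the converse direction computing $\|v_*\|=\sigma_*\sqrt{1-\delta^{-1}}$ uses the first \emph{and} third system equations, not the first alone.
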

\Cref{theorem:equivalence_system_optimization} implies that the nonlinear system of equations \eqref{eq:system} admits a unique solution $(a_*, \gamma_*, \sigma_*)\in \R \times \R_{>0}\times \R_{>0}$ if and only if the optimization problem $\min_{\mg(v) \le0}\ml(a,v)$ admits a unique solution $(a_*, v_*)\in\R\times \mh$ with $v_*\ne 0$ and a unique Lagrange multiplier $\mu_*>0$ satisfying the KKT condition \eqref{eq:KKT_main}. {In order to apply \Cref{theorem:equivalence_system_optimization},
we need to establish that the degenerate case $v_*=0$ cannot happen.}
\begin{lemma}(Non-degeneracy)\label{lm:nonzero}
If $(a_*, v_*)$ solves the optimization problem \eqref{eq:infinite_dimensional_optimization}, then $v_*\ne 0$. 
\end{lemma}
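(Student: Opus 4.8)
\emph{Proof plan.} Argue by contradiction: assume $(a_*, v_*)$ solves \eqref{eq:infinite_dimensional_optimization} with $v_* = 0$, and exhibit a feasible perturbation strictly decreasing $\ml$. Since $\mg(0) = 0$, the pair $(a_*, v)$ is feasible whenever $\mg(v) \le 0$; optimality of $(a_*, 0)$ therefore gives $\ml(a_*, v) \ge \ml(a_*, 0)$ for all such $v$, and in particular $\ml(a_*, 0) = \E[\ell_Y(a_* U)] < +\infty$. Using that each $\ell_y$ is $C^1$, strictly convex and nonconstant (\Cref{assumption}, item 1), on the complement of $\Omega_\vee(Y)$ the loss $\ell_Y$ is strictly monotone so that $\ell'_Y$ does not vanish there; combined with $\PP(\Omega_\vee) < 1$ (\Cref{assumption}, item 3) this yields $\PP\bigl(\ell'_Y(a_* U) \ne 0\bigr) > 0$. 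Hence I may fix $0 < c_1 < c_2$ with $\PP(B) > 0$ for the $\sigma(U,Y)$-measurable event $B := \{\,c_1 \le |\ell'_Y(a_* U)| \le c_2\,\}$, and set $z := -\sign\!\bigl(\ell'_Y(a_* U)\bigr)\, I\{B\}$, a bounded element of $\mh$ depending only on $(U,Y)$.

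I then take the perturbation direction $w := \alpha G + z \in \mh$ for a constant $\alpha > 0$ to be chosen large. Independence of $G$ and $(U,Y)$ gives $\E[wG] = \alpha$ and $\|w\|^2 = \alpha^2 + \E[z^2] \le \alpha^2 + \PP(B)$, so
\[
\mg(w) \;\le\; \sqrt{\alpha^2 + \PP(B)} \;-\; \frac{\alpha}{\sqrt{1 - \delta^{-1}}},
\]
which is strictly negative for $\alpha$ large enough since $1/\sqrt{1-\delta^{-1}} > 1$; fix such an $\alpha$. By positive homogeneity $\mg(\epsilon w) = \epsilon\, \mg(w) < 0$ for all $\epsilon > 0$, so $(a_*, \epsilon w)$ is feasible, and it suffices to find $\epsilon > 0$ with $\ml(a_*, \epsilon w) < \ml(a_*, 0)$. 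By convexity of $\ell_Y$, the difference quotient $\epsilon \mapsto \epsilon^{-1}\bigl(\ell_Y(a_* U + \epsilon w) - \ell_Y(a_* U)\bigr)$ is nondecreasing and decreases pointwise to $\ell'_Y(a_* U)\,w$ as $\epsilon \downarrow 0$; granting that $\ml(a_*, \epsilon_0 w) < +\infty$ for some small $\epsilon_0 > 0$ (discussed below), monotone convergence yields $\left.\tfrac{d}{d\epsilon}\right|_{\epsilon = 0^+}\ml(a_*, \epsilon w) = \E[\ell'_Y(a_* U)\, w] \in [-\infty, +\infty)$. Finiteness of $\ml(a_*, \epsilon_0 w)$ moreover forces $\bigl(\ell'_Y(a_* U) w\bigr)_+ \in L^1$, and since $\ell'_Y(a_* U)\, z$ is bounded this forces $\ell'_Y(a_* U) G$, hence $\ell'_Y(a_* U)$, to be integrable; by independence $\E[\ell'_Y(a_* U)\, \alpha G] = \alpha\, \E[\ell'_Y(a_* U)]\, \E[G] = 0$, so that
\[
\left.\tfrac{d}{d\epsilon}\right|_{\epsilon = 0^+}\ml(a_*, \epsilon w) \;=\; \E[\ell'_Y(a_* U)\, z] \;=\; -\,\E\bigl[\,|\ell'_Y(a_* U)|\, I\{B\}\,\bigr] \;\le\; -\, c_1 \PP(B) \;<\; 0 .
\]
Consequently $\ml(a_*, \epsilon w) < \ml(a_*, 0)$ for all sufficiently small $\epsilon > 0$, contradicting optimality; therefore $v_* \ne 0$.

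The step I expect to be the main obstacle is the one invoked without proof above: that the perturbed point lies in the effective domain of $\ml$, i.e.\ $\ml(a_*, \epsilon_0 w) = \E[\ell_Y(a_* U + \epsilon_0 \alpha G + \epsilon_0 z)] < +\infty$ for some $\epsilon_0 > 0$ (the lower bound $> -\infty$ being automatic from item 4 of \Cref{assumption}). This is where the integrability hypothesis $\E[|\ell_Y(G)|] < \infty$ of \Cref{assumption} (item 2) and the characterization of $\operatorname{dom}\ml$ in \Cref{lm:objective_func_property} must enter. A convenient route is the convexity estimate
\[
\ell_Y(a_* U + \epsilon_0 \alpha G + \epsilon_0 z) \;\le\; (1 - \epsilon_0 \alpha)\, \ell_Y\!\Bigl(\tfrac{a_* U + \epsilon_0 z}{1 - \epsilon_0 \alpha}\Bigr) \;+\; \epsilon_0 \alpha\, \ell_Y(G), \qquad \epsilon_0 \alpha < 1,
\]
whose second term integrates by item 2, so that only $\ml$ at the point $\bigl(\tfrac{a_*}{1-\epsilon_0\alpha},\, \tfrac{\epsilon_0}{1-\epsilon_0\alpha} z\bigr)$ — a small scaling-and-bounded-shift of $(a_*, 0)$ — must be controlled; alternatively one may replace the $G$-component of $w$ by a truncation $\alpha\, G\, I\{|G| \le K\}$, which still satisfies $\mg(w) < 0$ for $K$ large (since $\E[G^2 I\{|G|\le K\}] \to 1 > 1 - \delta^{-1}$) while making $\epsilon_0 w$ bounded. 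In either case, once membership of a short segment emanating from $(a_*, 0)$ in $\operatorname{dom}\ml$ is established, the convexity and monotone-convergence computation above closes the proof.
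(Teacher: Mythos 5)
You have correctly identified the weak point yourself, and it is a genuine gap rather than a routine verification: nothing in \Cref{assumption} gives upper growth control on $\ell_Y$ beyond $\E[|\ell_Y(G)|]<\infty$, and optimality of $(a_*,0)$ only gives $\E[|\ell_Y(a_*U)|]<\infty$, so there is no guarantee that $\ml(a_*,\epsilon w)<+\infty$ for \emph{any} $\epsilon>0$ along your ray $w=\alpha G+z$. Neither of your proposed repairs closes this. The convexity-splitting route requires finiteness of $\ml$ at $\bigl(a_*/(1-\epsilon_0\alpha),\,\cdot\bigr)$, i.e.\ at a strictly larger multiple of $U$, which the assumptions do not supply; and the truncation route still needs $\E[\ell_Y(a_*U+\text{bounded})]<\infty$, which can fail when $(a_*,0)$ sits on the boundary of $\operatorname{dom}\ml$: for a loss growing like $e^{t^2/4}/(1+t^2)$ one has $\E[\ell_Y(aU)]<\infty$ exactly up to a critical value of $|a|$, and at that critical $a_*$ even a bounded additive perturbation (hence also $\epsilon(\alpha G I\{|G|\le K\}+z)$, or the rescaled point) gives $\ml=+\infty$ for every $\epsilon>0$, so no contradiction is produced. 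The rest of your argument (the choice $z=-\sign(\ell_Y'(a_*U))I\{B\}$, the computation $\mg(w)<0$ for large $\alpha$, the monotone-convergence directional derivative, and the symmetry argument giving $\E[\ell_Y'(a_*U)G]=0$) is sound conditional on that domain membership, so the missing piece is exactly the one you flagged.

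The paper circumvents this obstacle by a different mechanism, and the contrast is instructive. It first invokes the strictly positive Lagrange multiplier from \Cref{lagrange}, so that $v_*=0$ minimizes the penalized functional $v\mapsto\ml(a_*,v)+\mu_*\mg(v)$ over all of $\mh$, and then tests not with a fixed direction but with $v_t=\prox[tf](tG)$ where $f(\cdot)=\lambda(\ell_Y(a_*U+\cdot)-\ell_Y(a_*U))$. The subgradient inequality $f(v_t)\le Gv_t-t^{-1}v_t^2$ gives an automatic integrable \emph{upper} bound on the loss at the test point, so no domain hypothesis is needed; a Moreau-envelope monotonicity argument then identifies $\lim_{t\to0^+}\|v_t/t\|^2=1+\lambda^2\E[\ell_Y'(a_*U)^2]\ge 1>1-\delta^{-1}$, which forces the penalized objective below its value at $0$, the desired contradiction. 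If you wish to salvage your more elementary feasible-direction approach, one possible repair is to perturb $a$ as well, moving to $\bigl((1-\epsilon(\alpha+\kappa))a_*,\ \epsilon(\alpha G+z)\bigr)$, which is a convex combination of the three domain points $a_*U$, $G$ and $z/\kappa$ (using $\E[|\ell_Y(c)|]<\infty$ for fixed constants $c$, as in \Cref{lm:ell_0_bounded}); but then the extra term $-(\alpha+\kappa)a_*\E[U\ell_Y'(a_*U)]$ in the directional derivative must be killed via the stationarity condition $0\in\partial_a\ml(a_*,v_*)$, which again brings in the KKT machinery of \Cref{lagrange} that your write-up deliberately avoided. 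As written, the proposal is incomplete at this step.
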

    In the proof of \Cref{lm:nonzero}, the differentiability of the loss $\ell_y$ is crucial in preventing the degenerate case $v_* = 0$. When the loss is not differentiable, another different threshold, $\delta_{\text{perfect}}$, emerges to determine whether $v_* = 0$ or $v_* \ne 0$ occurs
\citep{bellec2023existence}.   

\begin{lemma}(Uniqueness)\label{lm:unique}
    The minimizer of the optimization problem \eqref{eq:infinite_dimensional_optimization} is unique if it exists. Furthermore, the Lagrange multiplier satisfying the KKT condition \eqref{eq:KKT_main} is also unique. 
\end{lemma}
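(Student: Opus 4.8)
The plan is to combine the strict convexity of each loss $\ell_y$ with the first-order (KKT) conditions. The main obstacle is that $\ml(a,v)=\E[\ell_Y(aU+v)]$ is \emph{not} jointly strictly convex on $\R\times\mh$: the linear map $(a,v)\mapsto aU+v$ has the one-dimensional kernel $\R\cdot(1,-U)$, so strict convexity of $\ell_Y$ only pins down the random variable $aU+v$ and not the pair $(a,v)$ itself. Removing this extra degree of freedom is the whole difficulty, and the constraint $\mg(v)\le 0$ together with the stationarity conditions is what supplies it.

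So suppose $(a_1,v_1)$ and $(a_2,v_2)$ both solve \eqref{eq:infinite_dimensional_optimization}. Since $\mg$ is convex the midpoint $\big(\tfrac{a_1+a_2}{2},\tfrac{v_1+v_2}{2}\big)$ is feasible, and since $\ml$ is convex it is again a minimizer. Writing $Z_i=a_iU+v_i$ and using $\ell_Y\big(\tfrac{Z_1+Z_2}{2}\big)\le\tfrac12\ell_Y(Z_1)+\tfrac12\ell_Y(Z_2)$ pointwise (all terms integrable since $\ml(a_i,v_i)<+\infty$), equality of the objective values forces $\E\big[\tfrac12\ell_Y(Z_1)+\tfrac12\ell_Y(Z_2)-\ell_Y\big(\tfrac{Z_1+Z_2}{2}\big)\big]=0$; the integrand being nonnegative it vanishes almost surely, and strict convexity of $\ell_Y$ gives $Z_1=Z_2$ a.s. Hence $v_1-v_2=(a_2-a_1)\,U=:c\,U$, and it remains to show $c=0$.

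I would next record two properties of an \emph{arbitrary} minimizer $(a_*,v_*)$, using the KKT system \eqref{eq:KKT_main} (whose validity at any minimizer is available from the discussion preceding it, Slater's condition holding since $\mg(G)=1-1/\sqrt{1-\delta^{-1}}<0$). First, the multiplier is strictly positive and the constraint binds: if $\mu_*=0$ the first line of \eqref{eq:KKT_main} gives $0\in\partial_v\ml(a_*,v_*)=\{\ell_Y'(a_*U+v_*)\}$ (a singleton because $\ell_y\in C^1$), so $\ell_Y'(a_*U+v_*)=0$ a.s.; but on $\Omega_\nearrow\cup\Omega_\searrow$, which has positive probability since $\PP(\Omega_\vee)<1$, the strictly monotone $\ell_Y'$ never vanishes, which is a contradiction. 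Hence $\mu_*>0$, and complementary slackness yields $\mg(v_*)=0$. Second, $\langle v_*,U\rangle=0$: by \Cref{lm:nonzero} we have $v_*\ne 0$, hence $\partial\mg(v_*)=\big\{v_*/\|v_*\|-G/\sqrt{1-\delta^{-1}}\big\}$ and the $v$-stationarity in \eqref{eq:KKT_main} reads $\ell_Y'(Z_*)=-\mu_*\big(v_*/\|v_*\|-G/\sqrt{1-\delta^{-1}}\big)$ with $Z_*=a_*U+v_*$; pairing both sides with $U$ and using $\langle G,U\rangle=\E[G]\E[U]=0$ together with the $a$-stationarity $\E[U\ell_Y'(Z_*)]=0$ gives $0=-\mu_*\langle v_*,U\rangle/\|v_*\|$, hence $\langle v_*,U\rangle=0$.

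Finally I would combine these. Since $\langle U,G\rangle=0$, adding $cU$ to $v_2$ does not change the inner product with $G$, so from $\mg(v_1)=\mg(v_2)=0$ we get $\|v_1\|=\langle v_1,G\rangle/\sqrt{1-\delta^{-1}}=\langle v_2,G\rangle/\sqrt{1-\delta^{-1}}=\|v_2\|$. On the other hand $\|v_1\|^2=\|v_2+cU\|^2=\|v_2\|^2+2c\langle v_2,U\rangle+c^2\|U\|^2=\|v_2\|^2+c^2$, using $\langle v_2,U\rangle=0$ and $\|U\|^2=\E[U^2]=1$. Therefore $c=0$, so $a_1=a_2$ and $v_1=v_2$, which proves uniqueness of the minimizer. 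For the multiplier, at this unique minimizer the $v$-stationarity identity reads $\ell_Y'(Z_*)=-\mu_*w_*$ with $w_*:=v_*/\|v_*\|-G/\sqrt{1-\delta^{-1}}$; if $w_*=0$ then $\ell_Y'(Z_*)=0$ a.s., impossible as above, so $w_*\ne 0$ and $\mu_*=\|\ell_Y'(Z_*)\|/\|w_*\|$ is determined by the minimizer alone. The only delicate step is the identity $\langle v_*,U\rangle=0$; once that is in hand, everything else is bookkeeping.
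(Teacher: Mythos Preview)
Your proof is correct. The overall strategy matches the paper's: use strict convexity of $\ell_Y$ to force $a_1U+v_1=a_2U+v_2$ a.s., then eliminate the remaining degree of freedom $c=a_2-a_1$ via the binding constraint, and finally argue that $\nabla\mg(v_*)\ne 0$ forces the multiplier to be determined. The difference lies in how you kill $c$. The paper parametrizes $v_t=v'+tU(a-a')$ for $t\in[0,1]$, notes that $\E[v_tG]$ is constant (since $U\perp G$) and hence, by $\mg(v_t)=0$, that $t\mapsto\|v_t\|^2$ is constant; being a degree-two polynomial in $t$, its quadratic coefficient $(a-a')^2$ must vanish. This route uses only the binding of the constraint along the segment and never touches the $a$-stationarity. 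You instead extract the orthogonality $\langle v_*,U\rangle=0$ from the $v$- and $a$-stationarity together, and then compare norms at the two endpoints. Both arguments are short; the paper's avoids one ingredient (the $a$-equation), while yours isolates a clean identity $\E[v_*U]=0$ that is of independent interest. Your one-line argument for $\mu_*>0$ (if $\mu_*=0$ then $\ell_Y'(a_*U+v_*)=0$ a.s., impossible on $\Omega_\nearrow\cup\Omega_\searrow$) is also shorter than the sequence construction the paper uses in \Cref{lagrange}.
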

Combining \Cref{theorem:equivalence_system_optimization}, \Cref{lm:nonzero}, and \Cref{lm:unique}, we conclude that the system of nonlinear equations has a unique solution if and only if the infinite-dimensional optimization problem admits a minimizer. {
This equivalence is useful because studying the
existence of solutions to the system \eqref{eq:system} 
directly is a tenuous analysis problem that has been solved in only
a few cases: for the Lasso \cite{bayati2011lasso,miolane2021distribution},
or for the global null case of logistic regression \cite{sur2019likelihood}.
Instead of studying the system \eqref{eq:system} directly,
this equivalence allows us to focus on the existence of minimizer
for the infinite-dimensional convex minimization problem
\eqref{eq:infinite_dimensional_optimization}.
Even though the problem is infinite-dimensional, there is a well-developed
theory for convex minimization in Hilbert spaces
\cite{bauschke2017convex} which can be leveraged to study
\eqref{eq:infinite_dimensional_optimization}; including the KKT condition
or the fact that a coercive convex objective function admits a minimizer.

It remains to establish that the existence of a minimizer for the optimization problem is governed by the threshold $\delta_\infty$ (which is the same as in \Cref{theorem:phase_transition_conic_geometry}), thus completing the proof of \Cref{theorem:phase_transition_system}. 
}
\begin{theorem}\label{theorem:phase_transition}
    Let $\delta_\infty$ be the threshold defined in \eqref{eq:threshold_definition}. Then we have the following:
   \begin{itemize}
       \item If $\delta \le  \delta_{\infty}$, the problem
           \eqref{eq:infinite_dimensional_optimization} has no minimizer. 
       \item If $\delta > \delta_{\infty}$, the problem \eqref{eq:infinite_dimensional_optimization} admits a minimizer. 
   \end{itemize}
\end{theorem}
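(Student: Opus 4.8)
The plan is to analyze the recession (asymptotic) behavior of the objective $\ml$ restricted to the feasible cone $\{v : \mg(v)\le 0\}$, since a proper lsc convex function on a closed convex set in a Hilbert space attains its minimum as soon as it is coercive on that set, and conversely failure of coercivity along an appropriate direction will exhibit a minimizing sequence with no limit. Concretely, I would first compute the recession function of $\ml$. Using item~4 of \Cref{assumption} (the affine lower bound with slope $1/b$ in the divergent directions) together with item~2 (integrability), the directional recession of $\ml$ at rate $(a,v)\to\infty$ along a direction $(\alpha,w)\in\R\times\mh$ is controlled below by
\begin{align*}
\ml_\infty(\alpha,w) \;\gtrsim\; \tfrac1b\,\E\bigl[(\alpha U+w)\,I\{\Omega_\nearrow\} + |\alpha U+w|\,I\{\Omega_\vee\} - (\alpha U + w)\,I\{\Omega_\searrow\}\bigr],
\end{align*}
and this bound is tight in the sense that it is the exact recession function when $\ell_y$ grows exactly affinely. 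The feasible cone is $\mathcal C=\{w : \|w\|\le \E[wG]/\sqrt{1-\delta^{-1}}\}$; note $\mg$ is positively homogeneous so $\mg(v)\le 0$ already defines a cone. Thus the optimization problem has a minimizer iff $\ml$ has no direction of recession inside $\mathcal C\times\R$ along which it stays bounded above, i.e.\ iff $\ml_\infty(\alpha,w)>0$ for every $(\alpha,w)\in\R\times\mathcal C$ with $(\alpha,w)\neq 0$ (the degenerate direction $w=0$ is excluded using \Cref{lm:nonzero}-type reasoning and \Cref{assum:so_not_trivial}).

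The second step is to identify $\inf_{(\alpha,w)\in\R\times\mathcal C,\ \|w\|\le 1}\ml_\infty(\alpha,w)$ with a quantity involving $\varphi$ and $\delta$. Minimizing the affine-in-$w$ lower bound over $w$ subject to the conic constraint $\|w\|\le \E[wG]/\sqrt{1-\delta^{-1}}$ is a constrained linear program in Hilbert space; by scaling we may fix $\|w\| = \sqrt{\delta^{-1}}\cdot(\text{something})$ and the optimal $w$ will be, by a Cauchy–Schwarz / projection argument, proportional to the conditional expectation of the relevant sign pattern given $(U,Y)$ plus a multiple of $G$. This is exactly where the function $\varphi(t)$ enters: writing $w = sG + (\text{function of }U,Y)$ and optimizing, the quantity $\E[(G+Ut)^2 I\{\Omega_\vee\}] + \E[(G+Ut)_+^2 I\{\Omega_\nearrow\}] + \E[(G+Ut)_-^2 I\{\Omega_\searrow\}]$ appears as the squared norm of a projection, with $t = \alpha/s$ playing the role of the ratio of the $U$-component to the $G$-component. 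After carrying out this optimization, the condition "$\ml_\infty>0$ on the feasible cone" should reduce precisely to $\delta\cdot\inf_t\varphi(t) > 1$, i.e.\ $\delta>\delta_\infty$, while equality or the reverse inequality produces a feasible recession direction along which $\ml$ is nonincreasing, hence no minimizer.

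For the direction $\delta>\delta_\infty\Rightarrow$ existence: I would show $\ml$ is coercive on $\mathcal C\times\R$ by combining the strict positivity $\ml_\infty(\alpha,w)\ge c(|\alpha|+\|w\|)$ for a constant $c>0$ (uniform over the unit sphere of the cone, using compactness of the relevant finite-dimensional parametrization by $t$ and the strict inequality $\delta\inf\varphi>1$) with the lower semicontinuity and properness of $\ml$ from \Cref{lm:objective_func_property}; then invoke the standard Hilbert-space result (e.g.\ Bauschke–Combettes) that a proper lsc convex coercive function on a nonempty closed convex set attains its infimum. For the direction $\delta\le\delta_\infty\Rightarrow$ nonexistence: take a minimizing sequence along the recession direction $w_*$ (the minimizer of the $w$-subproblem at the optimal $t_*$, which exists by \Cref{lemma:threshold_exist}), check $w_*\in\mathcal C$, and verify via the affine lower bound matching its own upper bound (using that along $\Omega_\vee$ etc.\ the loss can be pinched between two affine functions up to the integrable term $D(Y)$) that $\ml(a + \lambda\alpha_*, v+\lambda w_*)$ does not increase as $\lambda\to\infty$; strict convexity then rules out any actual minimizer. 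The main obstacle I anticipate is the second step: justifying rigorously that the recession analysis in the infinite-dimensional Hilbert space reduces exactly to the finite-dimensional variational problem defining $\varphi$ — in particular handling the interaction between the conic constraint $\mg(v)\le0$ and the non-smooth, piecewise-affine recession function of $\ml$, and showing the inequality bounds are tight rather than merely one-sided. Care is also needed because $\ml_\infty$ may take the value $+\infty$ on part of the cone, so one must argue the infimum of $\ml_\infty$ over the feasible unit sphere is attained at a direction where the affine lower bound is exact.
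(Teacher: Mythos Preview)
Your high-level framework---coercivity of $\ml$ on the feasible cone versus existence of a feasible recession direction---is correct and is exactly the route the paper takes. But two of your concrete steps have genuine gaps.

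For non-existence when $\delta\le\delta_\infty$, your plan to ``pinch the loss between two affine functions'' fails: under $\Omega_\vee$ the loss may grow superlinearly (exponentially for Poisson), so no affine upper bound exists. The correct observation is purely order-theoretic and needs no bounds at all: the recession cone of $\ml$ is precisely the cone $\mathcal C$ of \eqref{eq:def_cone_Hilbert}, because for $(t_*,p_*)\in\mathcal C$ the map $s\mapsto\ell_Y(s(t_*U+p_*))$ is nonincreasing \emph{by the definition} of $\Omega_\nearrow,\Omega_\searrow,\Omega_\vee$, hence bounded by $\ell_Y(0)$. Feasibility of $p_*$ then follows from $\|p_*\|^2=\E[p_*G]=1-\delta_\infty^{-1}$ (\Cref{lemma:threshold_exist}), giving $\mg(p_*)\le 0$ exactly when $\delta\le\delta_\infty$; strict convexity and uniqueness then rule out a minimizer as you indicate.

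For existence when $\delta>\delta_\infty$, you correctly flag the main obstacle---the recession lower bound from \Cref{assumption}(4) is $L^1$-type while the constraint and $\varphi$ are $L^2$-type---but your sketch (``constrained linear program,'' ``compactness of the finite-dimensional parametrization'') does not resolve it; the unit sphere in $\mh$ is not compact and the affine lower bound on $\ml_\infty$ does not by itself produce the squared quantities in $\varphi$. The paper's resolution (\Cref{lm:minimizer_exist}) is a specific decomposition: write $v=\tilde v+(v-\tilde v)$ with $(a,\tilde v)\in\mathcal C$, so that $|v-\tilde v|$ is controlled in $L^1$ by \Cref{assumption}(4) and in $L^2$ by $|a|+\|v\|$. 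The bound on $\|v\|$ then comes from a projection inequality: since $p_*$ projects $G$ onto $\{p:(t,p)\in\mathcal C\}$ and $(a,\tilde v)\in\mathcal C$, one has $\E[\tilde v(p_*-G)]\ge 0$, which combined with $\mg(v)\le 0$ yields $(\sqrt{1-\delta^{-1}}-\|p_*\|)\|v\|\le\E[(\tilde v-v)(p_*-G)]$. The left factor is positive precisely when $\delta>\delta_\infty$, and the right side is closed by a truncation interpolating the $L^1$ and $L^2$ controls on $v-\tilde v$. This projection-and-decomposition step is the crux that makes $\delta_\infty$ appear, and it is missing from your proposal.
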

{
Let us explain where this phase transition comes from,
starting from the case $\delta \le \delta_\infty$ where we claim
that \eqref{eq:infinite_dimensional_optimization} admits no
minimizer.
The first idea concerns $\mathcal L$: a natural avenue to
show that there is no minimizer is to try to find a direction
$(a,v)$ such that $t\mapsto \ell_Y(s (aU + v))$ is decreasing
in $s>0$ for all realizations of $(Y, U)$
(we are looking for such a direction because if a convex function
admits a ray along which it is decreasing, then it admits no minimizer).
By considering the three events $\Omega_\vee(Y), \Omega_\nearrow(Y), \Omega_\searrow(Y)$, this motivates the definition of 
the cone $\mathcal{C} \subset \R \times \mh$ defined as 
\begin{align}
    (a, p)\in \mathcal{C} \Leftrightarrow aU + p 
    \begin{cases}
        \le 0  &\text{under }\Omega_{\nearrow}(Y)\\
        = 0  & \text{under }\Omega_{\vee} (Y)\\
        \ge 0  &\text{under }\Omega_{\searrow} (Y).
    \end{cases}
\end{align}
Next, the direction we are looking for should also
satisfy the constraint $\mg(v)\le 0$. This motivates the consideration
of
$$
(a_*,p_*) \in \argmin_{(a,p)\in\mathcal C} \E[(G - p)^2]
$$
because among all $(a,p)\in\mathcal C$ such that $\|p\|=\|p^*\|$,
the $p^*$ defined above necessarily has larger correlation $\E[Gp^*]$ 
with $G$, so $p^*$ has a better chance to satisfy the constraint
$\mg(p^*)\le 0$ than $p$. We show in \Cref{lemma:threshold_exist} that
$$
\|p_*\| = \sqrt{1-\delta_\infty^{-1}},
\qquad
\|p_*\|^2 = \E[Gp^*] .
$$
This immediately gives that if $\delta\le \delta_\infty$ then
$p^*$ satisfies the constraint $\mg(p_*)\le0$ in \eqref{eq:infinite_dimensional_optimization} (this is serendipitous and ``barely'' works out,
since for any $\delta > \delta_\infty$ the constraint would be violated).
If $\delta \le \delta_\infty$, we have exhibited a direction $(a_*, p_*)$ such that
$\mathcal L(s a_*, s p_*)$ is decreasing in $s>0$
and such that $\mg( s p_*) = s \mg(p_*)\le 0$.
Since we have found a direction along which the objective function
is decreasing and which satisfies the constraint, the minimization
problem admits no minimizer. All these arguments are made precise
and formally proved in the appendix.
}

For the other side of the phase transition, $\delta > \delta_\infty$,
the following idea is used, which exhibits a similarly serendipitous
phenomenon. Here we must show that \eqref{eq:infinite_dimensional_optimization}
admits a minimizer. This is typically obtained by showing
that the objective function is coercive (i.e., has bounded level sets):
that any $v,a$ satisfying
the constraint such that $\ml(a,v)\le \xi$ for some $\xi\in\R$
must satisfy $|a| + \|v\|\le C(\xi)$ for some constant $C(\xi)$.
We break the problem by breaking $v$ into two parts,
$v = \tilde v + (v-\tilde v)$
where
\begin{align*}
   \tilde{v} =-aU + \begin{cases}
       (aU+v)_{-} & \text{under }\Omega_{\nearrow}(Y)\\
        (aU+v)_{+} & \text{under }\Omega_{\searrow}(Y)\\
        0 & \text{under }\Omega_{\vee}(Y)
    \end{cases}
\end{align*}
so that $(a, \tilde{v})\in \mathcal{C}$ for all $(a, v)\in\R\times \mh$. 
Here, $aU+ \tilde v$ is the additive part of $aU+v$ that satisfies the constraints
in the definition of $\mathcal C$ and carries a risk of generating
a ray along which the objective function is decreasing (as for $p_*$ in
the previous paragraph). On the other hand, the other additive part
$v-\tilde v$ can be bounded using the one-sided 
coercivity of $\ell_Y$ in $\Omega_\nearrow(Y)$ or $\Omega_\searrow(Y)$,
and the two-sided coercivity under $\Omega_\vee(Y)$
(this is made precise and formally proved in the appendix, see
\Cref{lm:minimizer_exist}).
To bound $\tilde v$ (or directly $v$) after having controlled $v-\tilde v$,
we establish using the properties of $p_*$ the inequality
\begin{align*}
\|v\|
(\sqrt{1-\delta^{-1}} - \|p_*\|)
&\le \E[v(G-p_*)]
\\&\le\E[(\tilde v - v)(p_* - G)],
\end{align*}
see \eqref{25_serependipitous} in the appendix.
The factor
$(\sqrt{1-\delta^{-1}} - \|p_*\|)$ in the left-hand side
is positive only on the side $\delta>\delta_\infty$ of the phase transition,
which serendipitously lets us prove that $\|v\|$ is in turn bounded,
that the objective
function is \eqref{eq:infinite_dimensional_optimization}
is coercive, that  \eqref{eq:infinite_dimensional_optimization}
consequently admits a minimizer, and by the equivalence in 
\Cref{theorem:equivalence_system_optimization} that the 
system \eqref{eq:infinite_dimensional_optimization}
admits a solution. This strategy is made precise and formally proved
in \Cref{appendix_E_proof_3.4}.

\begin{figure*}
    \centering
    \includegraphics[width=0.96\textwidth]{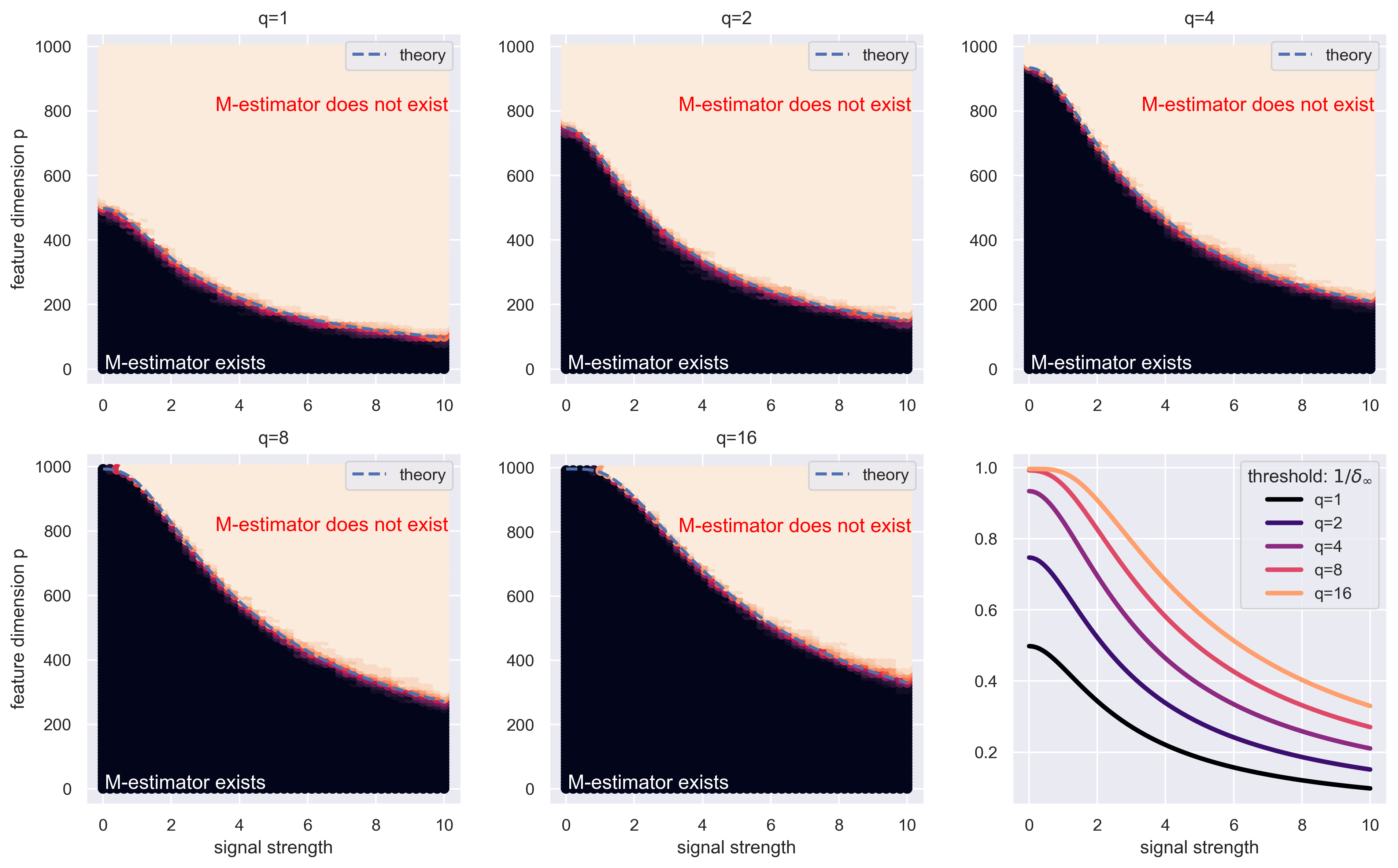}
    \caption{Count of instances where the minimizer in \eqref{infimum} exists for varying $p/n$ and signal strength $\kappa$. 
        Simulation parameter: $n=1000$, $20$ repetitions, $y_i\mid \bx_i \sim$ satisfies the binomial model $\operatorname{Binomial}(q, p_i)$ as in \eqref{eq:q_logistic_model}. 
        }
    \label{fig:q_logistic_phase_transition}
\end{figure*}

\section{Numerical Simulation}\label{sec:simulation}
We generate the covariates $(\bx_i)_{i=1}^n \iid N(\bm{0}_p, I_p)$ and responses $y_i\mid \bx_i$ according to the Poisson model
\begin{equation}
    \label{poisson_model}
\forall k\in \mathbb{N}, \quad 
\PP\Bigl(y_i = k \mid \bx_i\Bigr) = \frac{\lambda_i^k}{k!} \exp(-\lambda_i), 
\end{equation}
where $\lambda_i = \exp(-\kappa \bm{e}_1^\top \bx_i)$. 
Here, $\bm{e}_1$ is the first canonical basis vector, and $\kappa \ge 0$ is the signal strength. We fix $n=1000$ and for varying values of $(p/n, \kappa)$, we generate $20$ datasets of $(\bx_i, y_i)_{i=1}^n$. For each dataset $(\bx_i, y_i)_{i=1}^n$, we solve the optimization problem $\inf_{\bb\in\R^p} \sum_{i=1}^n \ell_{y_i}(\bx_i^\top\bb)$ using the Poisson loss $\ell_{y_i}(t)=\exp(t)-y_i t$ and record whether a minimizer exists using linear programming.
In \Cref{fig:poisson_phase_transition}, we normalize the count of instances where a minimizer exists by dividing by $20$, with the black points indicating higher rates of existence. Additionally, we plot the theoretical threshold $1/\delta_\infty$ defined in \eqref{eq:threshold_definition} and compare it with the empirical result. The theoretical threshold effectively separates the two regions, delineating where the M-estimator exists and where it does not.

Next, generate $y_i|\bx_i$ according to the Binomial distribution
\begin{align}\label{eq:q_logistic_model}
    \forall k\in [q], \quad \PP(y_i=k|\bx_i) = {q \choose k} p_i^{k} (1-p_i)^{q-k},  
\end{align}
where $p_i = \frac{1}{1+\exp(-\kappa \bm{e}_1^\top \bx_i)}$. Here, $q\in\{1, 2, \dots \} = \mathbb{N}$ is the number of measurement, a hyperparameter to be specified. Given the data set $(\bx_i, y_i)_{i=1}^n$, we solve the optimization problem $\inf_{\bb\in\R^p} \sum_{i=1}^n \ell_{y_i}(\bx_i^\top\bb)$ using the loss $\ell_{y}(t) = q\log(1+\exp(t)) - yt$; in other words, we compute the corresponding MLE.  Similarly to \Cref{fig:poisson_phase_transition}, \Cref{fig:q_logistic_phase_transition} plots the count of instances where a minimizer exists in \eqref{infimum} along with the theoretical threshold $1/\delta_\infty$ for each hyperparameter $q\in \{1, 2, 4, 8, 16\}$. When $q=1$, the simulation setting is reduced to the Binary logistic regression, thereby recovering the figure in \cite{candes2020phase}. The result for $q\ge 2$ is new, and we observe that the
generalized threshold \eqref{eq:threshold_definition} predicts well the existence of the MLE.

\section*{Impact Statement}

% Authors are \textbf{required} to include a statement of the potential 
% broader impact of their work, including its ethical aspects and future 
% societal consequences. This statement should be in an unnumbered 
% section at the end of the paper (co-located with Acknowledgements -- 
% the two may appear in either order, but both must be before References), 
% and does not count toward the paper page limit. In many cases, where 
% the ethical impacts and expected societal implications are those that 
% are well established when advancing the field of Machine Learning, 
% substantial discussion is not required, and a simple statement such 
% as the following will suffice:

This paper presents work whose goal is to advance the field of 
Machine Learning. There are many potential societal consequences 
of our work, none which we feel must be specifically highlighted here.

% The above statement can be used verbatim in such cases, but we 
% encourage authors to think about whether there is content which does 
% warrant further discussion, as this statement will be apparent if the 
% paper is later flagged for ethics review.

% In the unusual situation where you want a paper to appear in the
% references without citing it in the main text, use \nocite
% \nocite{langley00}

\bibliography{reference}
\bibliographystyle{icml2025}

%%%%%%%%%%%%%%%%%%%%%%%%%%%%%%%%%%%%%%%%%%%%%%%%%%%%%%%%%%%%%%%%%%%%%%%%%%%%%%%
%%%%%%%%%%%%%%%%%%%%%%%%%%%%%%%%%%%%%%%%%%%%%%%%%%%%%%%%%%%%%%%%%%%%%%%%%%%%%%%
% APPENDIX
%%%%%%%%%%%%%%%%%%%%%%%%%%%%%%%%%%%%%%%%%%%%%%%%%%%%%%%%%%%%%%%%%%%%%%%%%%%%%%%
%%%%%%%%%%%%%%%%%%%%%%%%%%%%%%%%%%%%%%%%%%%%%%%%%%%%%%%%%%%%%%%%%%%%%%%%%%%%%%%
\newpage
\appendix
\onecolumn
% You can have as much text here as you want. The main body must be at most $8$ pages long.
% For the final version, one more page can be added.
% If you want, you can use an appendix like this one.  

% The $\mathtt{\backslash onecolumn}$ command above can be kept in place if you prefer a one-column appendix, or can be removed if you prefer a two-column appendix.  Apart from this possible change, the style (font size, spacing, margins, page numbering, etc.) should be kept the same as the main body.
%%%%%%%%%%%%%%%%%%%%%%%%%%%%%%%%%%%%%%%%%%%%%%%%%%%%%%%%%%%%%%%%%%%%%%%%%%%%%%%
%%%%%%%%%%%%%%%%%%%%%%%%%%%%%%%%%%%%%%%%%%%%%%%%%%%%%%%%%%%%%%%%%%%%%%%%%%%%%%%

\section{Derivation of threshold from convex geometry}
Define $\delta_\infty\in (0, +\infty]$ as 
$$
1/\delta_\infty \coloneq \inf_{t\in\R}\varphi(t)
$$
where $\varphi:\R\to\R$ is the convex function defined as 
$$
\varphi(t) \coloneq \E\bigl[(G+Ut)^2 I\{\Omega_\vee(Y)\}\bigr] 
+ \E\bigl[(G+Ut)_{+}^2 I\{\Omega_\nearrow(Y)\}\bigr]
+ \E\bigl[(G+Ut)_{-}^2 I\{\Omega_\searrow(Y)\}\bigr].
$$
Here $\Omega_{\vee}, \Omega_{\nearrow}, \Omega_{\searrow}$ are $\sigma(Y)$-measurable events defined in \eqref{eq:def_Omega}. 
We denote by $\mh$ the Hilbert space which consists of measurable function of $(G, Y, U)$ with finite second moments.  Let $\mathcal{C} \subset \R \times \mh$ be the cone defined as 
\begin{align}\label{eq:def_cone_Hilbert}
    (t, p)\in \mathcal{C} \Leftrightarrow tU + p 
    \begin{cases}
        \le 0  &\text{under }\Omega_{\nearrow}(Y)\\
        = 0  & \text{under }\Omega_{\vee} (Y)\\
        \ge 0  &\text{under }\Omega_{\searrow} (Y)
    \end{cases}
\end{align}
\begin{lemma}\label{lemma:threshold_exist}
The threshold $\delta_\infty$ can be represented as 
\begin{align*}
    \delta_{\infty}^{-1} = \inf_{t\in\R}\varphi(t) = \inf_{t\in\R} \E[(G-p(t))^2] = \inf_{(t, p)\in \mathcal{C}} \E[(G-p)^2]
\end{align*}
where $(t, p(t))\in \mathcal{C}$ for all $t\in\R$ and $p(t)$ is given by 
$$
p(t) \coloneq -tU + \begin{cases}
    0  &\text{under $\Omega_\vee(Y)$}\\
    (G+Ut)_{-}  &\text{under $\Omega_\nearrow(Y)$}\\
    (G+Ut)_+ & \text{under $\Omega_\searrow(Y)$}. 
\end{cases}
$$  
Suppose that the law of $(U, Y)$ satisfies 
                \begin{align}\label{eq:equivalent_condition_existence_threshold}
                    \begin{split}
                        &\E\Bigl[U^2 \bigl(I\{\Omega_\vee(Y)\} + I\{\Omega_\searrow(Y), U>0\}+ I\{\Omega_\nearrow(Y), U<0\}\bigr)\Bigr] > 0,\\
                        & \E\Bigl[U^2 \bigl(I\{\Omega_\vee(Y)\} +  I\{\Omega_\searrow(Y), U<0\} + I\{\Omega_\nearrow(Y), U>0\}\bigr)\Bigr] > 0.        
                    \end{split}
                \end{align}
         Then the map $\varphi$ is coercive, i.e., $\lim_{|t|\to+\infty}\varphi(t)=+\infty$ and 
          $\inf_{t\in\R}\varphi(t)$ admits a minimizer $t_*\in \R$. Furthermore, the optimal $p_*=p(t_*)\in\mh$  satisfies 
         $$
        \E[p_*^2]=\E[p_*G], \quad \|p_*\| = \sqrt{1-\delta_{\infty}^{-1}}. 
         $$
\end{lemma}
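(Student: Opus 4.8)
The plan is to reduce every assertion to a pointwise scalar computation. \emph{First}, fix $t\in\R$ and consider minimising $\E[(G-p)^2]$ over the slice $\{p\in\mh:(t,p)\in\mathcal C\}$. The constraint in \eqref{eq:def_cone_Hilbert} restricts $p$ only through the $\sigma(Y)$-measurable events $\Omega_\vee,\Omega_\nearrow,\Omega_\searrow$ and the quantity $-tU$: for a.e.\ realisation of $(G,U,Y)$ the admissible values of $p$ form $\{-tU\}$ on $\Omega_\vee$, $(-\infty,-tU]$ on $\Omega_\nearrow$, and $[-tU,+\infty)$ on $\Omega_\searrow$. Hence the minimiser is obtained by replacing $p$ at each point with the real number nearest to $G$ in the corresponding set, namely $-tU$, $\min(G,-tU)$, and $\max(G,-tU)$ respectively, which is precisely $p(t)$: this function lies in $\mh$ (it is dominated by $|G|+|t|\,|U|$), it is feasible (since $tU+p(t)$ equals $0$, lies in $(-\infty,0]$, resp.\ in $[0,+\infty)$ on the three events, so $(t,p(t))\in\mathcal C$), and it minimises $\E[(G-p)^2]$ over the slice because $(G-p(t))^2\le (G-p)^2$ pointwise for every feasible $p$. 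A direct three-case computation gives
\begin{align*}
(G-p(t))^2 = (G+Ut)^2 I\{\Omega_\vee\} + (G+Ut)_+^2 I\{\Omega_\nearrow\} + (G+Ut)_-^2 I\{\Omega_\searrow\},
\end{align*}
hence $\E[(G-p(t))^2]=\varphi(t)$. Exchanging the two infima — immediate, since the pairs $(t,p)$ with $p$ feasible for the slice at $t$ are exactly the elements of $\mathcal C$ — gives $\delta_\infty^{-1}=\inf_t\varphi(t)=\inf_t\E[(G-p(t))^2]=\inf_{(t,p)\in\mathcal C}\E[(G-p)^2]$, the three claimed representations.

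\emph{Next}, I would prove that $\varphi$ is coercive under \eqref{eq:equivalent_condition_existence_threshold}. It is finite valued ($\varphi(t)\le\E[(G+Ut)^2]=1+t^2$) and convex (an expectation of the convex maps $t\mapsto(G+Ut)^2,(G+Ut)_+^2,(G+Ut)_-^2$), hence continuous on $\R$. Since $(G+Ut)^2/t^2\to U^2$, $(G+Ut)_+^2/t^2\to U^2 I\{U>0\}$ and $(G+Ut)_-^2/t^2\to U^2 I\{U<0\}$ pointwise a.s.\ as $t\to+\infty$, Fatou's lemma gives
\begin{align*}
\liminf_{t\to+\infty}\frac{\varphi(t)}{t^2}\ \ge\ \E\Bigl[U^2\bigl(I\{\Omega_\vee\}+I\{\Omega_\nearrow,U>0\}+I\{\Omega_\searrow,U<0\}\bigr)\Bigr],
\end{align*}
which is strictly positive by the second line of \eqref{eq:equivalent_condition_existence_threshold}; the mirrored computation as $t\to-\infty$ (with $U>0$ and $U<0$ interchanged) uses the first line. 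Hence $\varphi(t)\to+\infty$ as $|t|\to\infty$, and a finite continuous coercive function on $\R$ attains its infimum at some $t_*\in\R$. I expect this to be the only step needing real care — one must correctly match the three surviving event--sign combinations to the two lines of \eqref{eq:equivalent_condition_existence_threshold}; the rest is routine.

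\emph{Finally}, for $p_*:=p(t_*)$: by the first step $(t_*,p_*)$ minimises $(t,p)\mapsto\E[(G-p)^2]$ over the convex cone $\mathcal C$, so the first-order optimality inequality $\E[(G-p_*)(p_*-p)]\ge 0$ holds for every $(t,p)\in\mathcal C$. Evaluating it at $(0,0)\in\mathcal C$ and at $(2t_*,2p_*)\in\mathcal C$ — both feasible because $\mathcal C$ is a cone — gives $\E[(G-p_*)p_*]\ge 0$ and $\E[(G-p_*)p_*]\le 0$, whence $\E[p_*^2]=\E[Gp_*]$. Plugging this into $\delta_\infty^{-1}=\E[(G-p_*)^2]=\E[G^2]-2\E[Gp_*]+\E[p_*^2]=1-\E[p_*^2]$ yields $\|p_*\|^2=1-\delta_\infty^{-1}$, and since $\delta_\infty^{-1}=\inf_t\varphi(t)\le\varphi(0)\le\E[G^2]=1$ the right-hand side is nonnegative, so $\|p_*\|=\sqrt{1-\delta_\infty^{-1}}$, completing the proof.
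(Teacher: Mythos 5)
Your proposal is correct, and it follows the paper's proof for the first two claims: the identification $\varphi(t)=\E[(G-p(t))^2]$ via pointwise projection of $G$ onto the slice of $\mathcal C$ at fixed $t$, and coercivity of $\varphi$ from the limit of $\varphi(t)/t^2$ (the paper uses dominated convergence to get the exact limits; your Fatou lower bound suffices and matches the two lines of \eqref{eq:equivalent_condition_existence_threshold} to the correct directions $t\to\pm\infty$). Two remarks. First, your explicit projections $\min(G,-tU)$ under $\Omega_\nearrow$ and $\max(G,-tU)$ under $\Omega_\searrow$ are the right objects; note that with the paper's stated convention $a_-=\max(-a,0)$ the displayed formula for $p(t)$ under $\Omega_\nearrow$ should read $-tU-(G+Ut)_-$ (equivalently $-tU+\min(G+Ut,0)$), otherwise $(t,p(t))\notin\mathcal C$; you have silently corrected this sign typo, which is fine. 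Second, your derivation of $\E[p_*^2]=\E[p_*G]$ genuinely differs from the paper's: the paper uses the scalar stationarity condition $\varphi'(t_*)=0$ (differentiation under the expectation) to get $\E[(G-p_*)U]=0$ and then an algebraic computation with $\E[Up_*]=0$; you instead use the first-order variational inequality for the minimizer of $\|G-p\|^2$ over the closed convex cone $\mathcal C$, tested at $(0,0)$ and $(2t_*,2p_*)$ — the standard obtuse-angle characterization of conic projections. Your route avoids justifying the interchange of derivative and expectation and the ensuing bookkeeping, and it generalizes verbatim to any closed convex cone; the paper's route yields in passing the extra identity $\E[Up_*]=0$, which is not needed for the lemma but is of the same flavor as the second equation of the system \eqref{eq:system}. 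Both arguments then conclude $\|p_*\|^2=1-\delta_\infty^{-1}$ identically.
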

\begin{proof}
                Fix $t\in\R$. By the definition of the cone $\mathcal{C}$ and $\varphi$, it easily follows that 
                \begin{align*}
                    \inf_{p\in\mh: (t, p)\in \mathcal{C}} \E[(G-p)^2] = \E[(G-p(t))^2] 
                    = \varphi(t) 
                \end{align*}
                % where the optimization is attained at $p=p(t)\in\mh$:
                % $$
                % p(t) \coloneq -tU + \begin{cases}
                %     0  &\text{under $\Omega_\vee(Y)$}\\
                %     (G+Ut)_{-}  &\text{under $\Omega_\nearrow(Y)$}\\
                %     (G+Ut)_+ & \text{under $\Omega_\searrow(Y)$}
                % \end{cases}
                % $$    
                This proves the representation $\inf_{t\in\R}\varphi(t)= \inf_{(t, p)\in\mathcal{C}}\E[(G-p)^2] = \inf_{t\in\R}\E[(G-p(t))^2]$.

                Next, let us show that the map $\varphi:\R\to\R$
                \begin{align*}
                   \varphi(t) =  \E\bigl[(G+Ut)^2 I\{\Omega_\vee(Y)\}\bigr] 
                   + \E\bigl[(G+Ut)_{+}^2 I\{\Omega_\nearrow(Y)\}\bigr]
                   + \E\bigl[(G+Ut)_{-}^2 I\{\Omega_\searrow(Y)\}\bigr]
                \end{align*}
                is coercive. Since $t\mapsto \varphi(t)$ is convex, it suffices to show the coercivity, i.e., 
                $\lim_{|t|\to+\infty}\varphi(t)=+\infty$. For the first term, expanding the square of $(G+Ut)^2$, it immediately follows that
                 $$
                 \E[(G+Ut)^2 I\{\Omega_\vee(Y)\}] = t^2 \E[U^2 I\{\Omega_\vee(Y)\}] + O(t) 
                 $$
                 For the second term $\E[(G+Ut)_+^2 I\{\Omega_\nearrow(Y)\}]$, 
                 \begin{align*}
                    \E[(G+Ut)_+^2 I\{\Omega_\nearrow(Y)\}] &= \E[(G+Ut)^2 I\{\Omega_\nearrow(Y), \ G+Ut > 0\}]\\
                    &= t^2 \E[U^2 I\{\Omega_\nearrow(Y)\} I\{G+Ut > 0\}] + O(|t|)
                 \end{align*}
                 as $|t|\to+\infty$. 
                 By $\E[U^2]=1<+\infty$, the dominated convergence theorem implies 
                 \begin{align*}
                    \E[U^2 I\{\Omega_\nearrow(Y)\} I\{G+Ut > 0\}]
                    &=\E[U^2 I\{\Omega_\nearrow(Y)\} I\{G+Ut > 0, U>0\}] \\
                    &+ \E[U^2 I\{\Omega_\nearrow(Y)\} I\{G+Ut > 0, U<0\}]\\
                    &\to \begin{cases}
                        \E[U^2I\{\Omega_\nearrow(Y), U>0\}] & t\to+\infty\\
                        \E[U^2I\{\Omega_\nearrow(Y), U<0\}] & t\to-\infty
                    \end{cases}
                 \end{align*}
                 Thus, 
                 \begin{align*}
                    \frac{1}{t^2}\E[(G+Ut)_+^2 I\{\Omega_\nearrow(Y)\} \to \begin{cases}
                        \E[U^2I\{\Omega_\nearrow(Y), U>0\}] & t\to+\infty\\
                        \E[U^2I\{\Omega_\nearrow(Y), U<0\}] & t\to-\infty
                    \end{cases} 
                 \end{align*}
                 By the same argument, we have
                \begin{align*}
                    \frac{1}{t^2} \E[(G+Ut)_{-}^2 I\{\Omega_\searrow(Y)\}] \to \begin{cases}
                        \E[U^2I\{\Omega_\searrow(Y), U<0\}] & t\to+\infty\\
                        \E[U^2I\{\Omega_\searrow(Y), U>0\}] & t\to-\infty
                    \end{cases} 
                \end{align*}
               Putting them together, we obtain
                \begin{align*}
                    \frac{ \varphi(t)}{t^2} \to \begin{cases}
                        \E[U^2 I\{\Omega_\vee(Y)\}] + \E[U^2I\{\Omega_\nearrow(Y), U>0\}] + \E[U^2I\{\Omega_\searrow(Y), U<0\}] & t\to+\infty\\
                        \E[U^2 I\{\Omega_\vee(Y)\}] + \E[U^2I\{\Omega_\nearrow(Y), U<0\}] + \E[U^2I\{\Omega_\searrow(Y), U>0\}] & t\to-\infty
                    \end{cases}.
                \end{align*}
               Since the limit is strictly positive by the condition \eqref{eq:equivalent_condition_existence_threshold}, the map $t\mapsto \varphi(t)$ is coercive, and hence $\inf_{t\in\R}\varphi(t)$ is attained at some $t=t_*\in\R$.

                Finally, we prove $\E[p_*^2]=\E[p_*G]$ and $\|p_*\|=\sqrt{1-\delta_\infty^{-1}}$. The stationary condition of $\varphi'(t_*) = 0$ gives 
                \begin{align*}
                    0&= 2 \E[(G+Ut_*) U I\{\Omega_\vee(Y)\}] 
                    + 2\E[(G+Ut_*)_{+} U I\{\Omega_\nearrow(Y)\}]
                    + 2\E[(G+Ut_*)_{-} U I\{\Omega_\searrow(Y)\}]     \\
                    &= 2 \E[(G-p_*)U]    
                \end{align*}
                where 
                $$
                p_* \coloneq p(t_*) = -t_*U +(G+Ut_*)_- I\{\Omega_\nearrow(Y)\} + (G+Ut_*)_+ I\{\Omega_\searrow(Y)\}
                $$
                Here, $\E[GU]=0$ since $G$ and $U$ are independent standard normal. Thus, the last equation gives $\E[p_*U] = 0$. Then, we have 
            \begin{align*}
                \E[p_*^2] &= -t_*^2 + t_*^2 + \E[p_*^2] + 2t_* \E[p_* U] && \E[p_* U]=0 \\
                &= -t_*^2 + \E[(p_*+Ut_*)^2]\\
                &=  -t_*^2 + \E\Bigl[I\{\Omega_\nearrow(Y)\}(G+Ut_*)_{-}^2\Bigr] +  \E\Bigl[I\{\Omega_\searrow(Y)\}(G+Ut_*)_{+}^2\Bigr],\\
                \E[p_* G] &= \E[(p_* + Ut_*) G] && \E[UG]=0\\
                &= \E[(t_*U+G)_{-} G I\{\Omega_\nearrow(Y)\}] +  \E[(t_*U+G)_{+} G I\{\Omega_\searrow(Y)\}]
            \end{align*}
            By $(G+t_*U)_{\pm}^2=(G+t_* U)_{\pm}(G+t_*U)$ and the definition of $p_*$, we get 
            \begin{align*}
                &\E[p_*^2]-\E[p_*G] \\
                &= -t_*^2  + \E[(t_*U+G)_{-} t_*U I\{\Omega_\nearrow(Y)\}] +  \E[(t_*U+G)_{+} t_*U I\{\Omega_\searrow(Y)\}]\\
                &= -t_*^2 + t_*\E\Bigl[U \bigl((G+Ut_*)_- I\{\Omega_\nearrow(Y)\} + (G+Ut_*)_+ I\{\Omega_\searrow(Y)\}\bigr)\Bigr] \\
                &= -t_*^2 + t_* \E[U (t_*U+p_*) ]&& \text{by the definition of $p_*$} \\
                &= -t_*^2 + t_*^2 + 0 && \E[U^2]=1, \ \E[Up_*]=0\\
                &=0.
            \end{align*}
            The equation $\|p_*\|=\sqrt{1-\delta_{\infty}^{-1}}$ follows from $\|p_*\|^2 = \E[p_*G]$ and $\delta_{\infty}^{-1}=\E[(G-p_*)^2]$. 
\end{proof}

\subsection{Proof of \Cref{theorem:phase_transition_conic_geometry}}
\begin{lemma}\label{lemma:equivalent_condition_mle}
    Suppose that $\ell_{y_i}$ is strictly convex, $C^1$, and $\inf_u \ell_{y_i}(u)$ is finite. Then the M-estimator does not exist if and only if
\begin{align}\label{eq:equivalent_condition_mle}
    \exists \bb_{*} \in\R^p\setminus\{0\} \ \text{such that} \left(
        \forall i\in [n], \quad \bx_i^\top\bb_{*}  \begin{cases}
        \ge 0 & \text{under } \Omega_{\searrow}(y_i)\\
        = 0 & \text{under } \Omega_{\vee}(y_i)\\
        \le 0 &\text{under } \Omega_{\nearrow}(y_i)\\
    \end{cases} 
    \right)
\end{align}
\end{lemma}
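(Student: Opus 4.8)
The plan is to reduce everything to elementary properties of the convex objective $L(\bb) \coloneq \sum_{i=1}^n \ell_{y_i}(\bx_i^\top\bb)$, which is continuous (each $\ell_{y_i}$ is $C^1$) and bounded below, since $L(\bb)\ge \sum_i \inf_u \ell_{y_i}(u) > -\infty$. A preliminary step I would record first is that, under the stated hypotheses, for each $i$ exactly one of $\Omega_\vee(y_i), \Omega_\nearrow(y_i), \Omega_\searrow(y_i)$ occurs: strict convexity plus $C^1$-ness make $\ell_{y_i}'$ continuous and strictly increasing, so writing $a_i \coloneq \lim_{u\to-\infty}\ell_{y_i}'(u) < b_i \coloneq \lim_{u\to+\infty}\ell_{y_i}'(u)$, one is in $\Omega_\vee(y_i)$ when $a_i<0<b_i$ (an interior minimizer exists and $\ell_{y_i}$ is two-sided coercive), in $\Omega_\nearrow(y_i)$ when $a_i\ge 0$ (so $\ell_{y_i}'>0$ everywhere), and in $\Omega_\searrow(y_i)$ when $b_i\le 0$; these cases are mutually exclusive because $a_i<b_i$, and finiteness of $\inf_u\ell_{y_i}$ rules out any other possibility. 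Consequently, for $M$ above $\inf_u\ell_{y_i}$, the sublevel set $\{u:\ell_{y_i}(u)\le M\}$ is a bounded interval under $\Omega_\vee(y_i)$, a half-line $(-\infty,c]$ under $\Omega_\nearrow(y_i)$, and a half-line $[c,\infty)$ under $\Omega_\searrow(y_i)$.

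For the implication ``the M-estimator does not exist $\Rightarrow$ \eqref{eq:equivalent_condition_mle}'', I would take a minimizing sequence $(\bb^{(k)})_k$ for $L$. It must be unbounded, since otherwise a subsequence converges and continuity of $L$ yields a minimizer; hence, after passing to a subsequence, $\|\bb^{(k)}\|\to\infty$ and $\bb^{(k)}/\|\bb^{(k)}\|\to\bb_*$ with $\|\bb_*\|=1\ne 0$. Because $\sup_k L(\bb^{(k)})<\infty$ and all other summands are bounded below, each $\ell_{y_i}(\bx_i^\top\bb^{(k)})$ is bounded above by a finite constant uniformly in $k$, i.e.\ $\bx_i^\top\bb^{(k)}$ remains in a sublevel set of $\ell_{y_i}$ of the shape described above. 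Dividing by $\|\bb^{(k)}\|\to\infty$ then forces $\bx_i^\top\bb_*=0$ under $\Omega_\vee(y_i)$, $\bx_i^\top\bb_*\le 0$ under $\Omega_\nearrow(y_i)$, and $\bx_i^\top\bb_*\ge 0$ under $\Omega_\searrow(y_i)$, so $\bb_*$ is a witness for \eqref{eq:equivalent_condition_mle}.

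For the converse, assume $\bb_*\ne 0$ satisfies \eqref{eq:equivalent_condition_mle}. Using that $(\bx_1,\dots,\bx_n)$ spans $\R^p$ — which holds almost surely in the regime of \Cref{theorem:phase_transition_conic_geometry}, where $\bx_i\sim N(0,I_p)$ i.i.d.\ and $n\ge p$ — there is some $i_0$ with $\bx_{i_0}^\top\bb_*\ne 0$. Fix any $\bb\in\R^p$ and consider $s\mapsto L(\bb+s\bb_*)=\sum_i\ell_{y_i}(\bx_i^\top\bb+s\,\bx_i^\top\bb_*)$ for $s\ge 0$: every summand is non-increasing (constant under $\Omega_\vee(y_i)$ since $\bx_i^\top\bb_*=0$; a decreasing loss composed with a non-decreasing affine map under $\Omega_\searrow(y_i)$ with $\bx_i^\top\bb_*\ge0$; an increasing loss composed with a non-increasing affine map under $\Omega_\nearrow(y_i)$ with $\bx_i^\top\bb_*\le0$), and the $i_0$-th summand is strictly decreasing (its cone condition together with $\bx_{i_0}^\top\bb_*\ne0$ gives a strictly monotone loss composed with a strictly monotone affine map of matching orientation). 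Hence $L(\bb+s\bb_*)<L(\bb)$ for all $s>0$; as $\bb$ was arbitrary, $L$ has no minimizer and the M-estimator does not exist.

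The step I expect to be the main obstacle is the preliminary trichotomy combined with the limiting argument in the first implication: one must check that $\Omega_\vee,\Omega_\nearrow,\Omega_\searrow$ are genuinely exhaustive for each $i$ under the weak hypotheses (this is exactly where finiteness of $\inf_u\ell_{y_i}$ is used) and that normalizing the minimizing sequence transfers the sublevel-set membership to the correct sign conditions on $\bx_i^\top\bb_*$ in the limit. The converse is comparatively routine, the only point worth flagging being the (a.s.) full-rank property of the design, which is what allows a nonzero $\bb_*$ to be detected by some $\bx_{i_0}$.
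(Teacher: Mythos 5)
Your proof is correct, and it uses the same two structural ingredients as the paper — a normalized unbounded minimizing sequence for the direction ``no minimizer $\Rightarrow$ \eqref{eq:equivalent_condition_mle}'', and a monotone ray together with the almost-sure rank-$p$ property of the Gaussian design for the converse — but the execution differs in both directions in ways worth noting. For the forward direction the paper argues by contradiction (if the limiting unit direction $\bv$ violated \eqref{eq:equivalent_condition_mle}, a convexity/derivative growth bound would force $L(\bb_k)\to+\infty$), whereas you argue directly: uniform boundedness of each summand confines $\bx_i^\top\bb^{(k)}$ to a sublevel set (bounded interval, or a half-line, according to the trichotomy), and dividing by $\|\bb^{(k)}\|$ yields exactly the sign constraints on $\bx_i^\top\bb_*$; these are essentially contrapositive forms of the same idea. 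For the converse the paper first shows $L(\bb+\bb_*)\le L(\bb)$ via a convex-combination limit, then assumes a minimizer exists, produces a second minimizer, and uses strict convexity plus the trivial kernel of the design matrix to reach a contradiction; you instead use the (equivalent) spanning property to pick an index $i_0$ with $\bx_{i_0}^\top\bb_*\ne 0$, which by \eqref{eq:equivalent_condition_mle} must fall in $\Omega_\nearrow$ or $\Omega_\searrow$, and obtain a strictly decreasing ray $s\mapsto L(\bb+s\bb_*)$ — this is more elementary and does not invoke strict convexity in that direction at all. Note that, like the paper's own proof, your converse uses a property of the random design (rows spanning $\R^p$, equivalently $\operatorname{rank}=p$ a.s.) that is not among the lemma's stated hypotheses, so this is not a gap relative to the paper; your preliminary trichotomy of $\Omega_\vee,\Omega_\nearrow,\Omega_\searrow$ under strict convexity and $C^1$ is also correct (though, as a small remark, it holds without the finiteness of $\inf_u\ell_{y_i}$, which is really needed for the lower-boundedness of $L$ and the nonemptiness of the sublevel sets).
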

\begin{proof}
    \textbf{\eqref{eq:equivalent_condition_mle} holds $\Rightarrow$ M-estimator does not exist:}. 
    Let $L(\bb) \coloneq \sum_{i=1}^n \ell_{y_i} (\bx_i^\top\bb)$ be the objective function. 
    Suppose there exists $\bb_*\in \R^p\setminus\{\bm{0}_p\}$ such that \eqref{eq:equivalent_condition_mle} is satisfied. Then, the map $\R_{\ge 0}\ni t\mapsto L(t\bb_*)$ is uniformly bounded by $\sum_{i=1}^n \ell_{y_i}(0)$. For all $\bb\in\R^p$, let $\bb_\nu $ be the convex combination
    $$
   \bb_\nu = (1-1/\nu) \bb + (1/\nu) (\nu \bb_*), \quad \nu > 1. 
    $$
    Note $\bb_\nu\to\bb + \bb_*$ as $\nu\to+\infty$. 
    By the convexity of $L(\bb)$ and uniform bound $\sup_{t\ge 0} L(t\bb_*) \le \sum_{i=1}\ell_{y_i}(0)$, we have
    $$  
    L(\bb_\nu ) \le (1-1/\nu) L(\bb_*)  + (1/\nu) L(\nu \bb_*) \le (1-1/\nu) L(\bb_*)  + (1/\nu) \sum_{i=1}^n \ell_{y_i}(0)
    $$
    Taking $\nu\to+\infty$, the RHS converges to $L(\bb_*)$, while the LHS converges to $L(\bb+\bb_*)$ by the continuity of $L(\cdot)$ and $\bb_\nu\to \bb+\bb_*$, so we are left with
    $$
    L(\bb+\bb_*)\le L(\bb), \quad \forall \bb\in\R^p. 
    $$
    Now we suppose that the M-estimator $\hat{\bb}\in \argmin_{\bb\in\R^p}L(\bb)$ exists. Setting $\bb=\hat{\bb}$ in the above display, since $\bb_*\ne \bm 0_p$, we know that $\tilde{\bb} \coloneq \bb_* + \hat{\bb}$ is also a minimizer with $\tilde{\bb}\ne \hat{\bb}$. For all $t\in (0,1)$, by the convexity of $L(\cdot)$, we have
    $$
    L(t\hat{\bb} + (1-t)\tilde{\bb}) \le t L(\hat{\bb}) + (1-t) L (\hat{\bb})
    $$
    Since $\hat{\bb}$ and $\tilde{\bb}$ minimize $L$, this holds in equality. With $L(b)=\sum_{i}\ell_{y_i} (\bx_i^\top b)$ and by the convexity of $\ell_{y_i}$, the equality condition reads to  
    $$
    \forall i \in [n], \quad \ell_{y_i}(t \bx_i^\top \hat{\bb} + (1-t) \bx_i^\top \tilde{\bb}) = t \ell_{y_i}(\bx_i^\top \hat{\bb}) + (1-t)\ell_{y_i}(\bx_i^\top \tilde{\bb})
    $$
    for all $t\in (0,1)$. 
    By the strict convexity of $\ell_{y_i}$, we must have $\bx_i^\top \hat{\bb} = \bx_i^\top \tilde{\bb}$ for all $i\in[n]$, i.e., $\hat{\bb}-\tilde{\bb}\in \operatorname{Ker}(\bm{X})$. However, since $\bm{X}\in\R^{n\times p}$ has iid $N(0,1)$ entry, $\bm{X}$ is an $n\times p$ matrix with  $\operatorname{rank}(X)=p<n$. This implies $\operatorname{Ker}(\bm{X})=\{\bm{0}_p\}$ and $\hat{\bb}-\tilde{\bb}=\bm{0}_p$, which is a contradiction with $\hat{\bb}\ne \tilde{\bb}$. Thus, if \eqref{eq:equivalent_condition_mle} holds then the M-estimator does not exist. \\
    
    \textbf{M-estimator does not exist $\Rightarrow$ \eqref{eq:equivalent_condition_mle} holds:}
    Suppose that the M-estimator does not exist. Then, there exists a sequence $(\bb_k)_{k=1}^\infty$ such that as $k\to+\infty$, we have $L(\bb_k)\to \inf_{\bb\in\R^p}L(\bb)$ and $\|\bb_k\|\to+\infty$. 
By the compactness of the unit sphere in $\R^p$, we can extract a subsequence $(\bb'_k)_{k=1}^\infty$ of $(\bb_k)_{k=1}^\infty$ such that $\bb_k'/\|\bb_k'\|\to \bv$ for a unit sphere vector $\bv\in\R^p$. Therefore, we can assume without loss of generality that \( \bb_k / \|\bb_k\| \) converges to a unit sphere vector \( \bv \).

We proceed by contradiction; suppose \eqref{eq:equivalent_condition_mle} is not satisfied. Then we can find an index \(i=i(\bv)\in [n] \) associated with the unit sphere vector $\bv$ such that 
\begin{align*}
       \bx_i^\top\bv \begin{cases}
        < 0 & \text{under } \Omega_{\searrow}(y_i)\\
        \ne 0 & \text{under } \Omega_{\vee}(y_i)\\
        > 0 &\text{under } \Omega_{\nearrow}(y_i)\\
    \end{cases} 
\end{align*}
If \( \Omega_\searrow(y_i) \), or \( \Omega_\nearrow(y_i) \), then the derivative of \( t \mapsto \ell_{y_i} (t \bx_i^\top \bv) \) is positive for all \( t \geq 1 \).  
If \( \Omega_\vee \), it is not necessarily positive right away at \( t=1 \), but eventually positive for \( t \) large enough, say some \( t_*\): for any \( t > t_* \), the derivative of \( t \mapsto \ell_i(t \bx_i^\top \bv) \) is positive.
Call this derivative at \( t_* \), say \( A = \bx_i^\top \bv \ell_{y_i}'(t_* \bx_i^\top \bv) \) and \( A > 0 \).

Let \( \bv_k = \bb_k / \|\bb_k\| \) so that $\bx_i^\top \bv_k \to \bx_i^\top \bv$ . Since $\ell_{y_i}$ is $C^1$,  
for \( k \) large enough we have that the derivative of \( t \mapsto \ell_{y_i}(t \bx_i^\top \bv_k) \) at \( t_* \) is larger than \( A / 2 \).  
Call this derivative \( A_k = \bx_i^\top \bv_k \ell_{y_i}'(t_* \bx_i^\top \bv_k) \) so that \( A_k > A / 2 > 0 \).

By the convexity of $t\mapsto \ell_{y_i}(t \bx_i^\top \bv_k)$, we have 
\[
\ell_{y_i}(\bx_i^\top \bb_k) =\ell_{y_i} (\|\bb_k\|_2 \bx_i^\top\bv_k)
\geq \ell_{y_i}(t_* \bx_i^\top \bv_k) + (\|\bb\|_k - t_*) A_k  
\geq \ell_{y_i}(t_* \bx_i^\top \bv_k) + (\|\bb_k\| - t_*) A/2.
\]
This gives 
\[
L(\bb_k) = \sum_{j=1}^n 
\ell_{y_j}(\bx_j^\top \bb_k) \geq \sum_{j \ne i} \inf_u \ell_{y_j}(u)+ \ell_{y_i}(t_* \bx_i^\top \bv_k) + (\|\bb_k\| - t_*) A,
\]
where the RHS goes to \( \infty \) since $\|\bb_k\|\to+\infty$ and $A>0$. This is a contradiction with $L(\bb_k)\to \inf_{\bb}L(\bb)<+\infty$. Thus, if the M-estimator does not exist then \eqref{eq:equivalent_condition_mle} holds. 
\end{proof}

\begin{proof}[Proof of \Cref{theorem:phase_transition_conic_geometry}]
By the rotational invariance of $\bx_i\sim N(\bm{0}_p, I_p)$, we can assume without loss of generality that $y_{i}$ depend on $\bx_i = (x_{i1}, x_{i2}, \dots, x_{ip})\in\R^p$ through its first coordinate $x_{i1}$, i.e., 
\begin{align}\label{eq:independent_cordinate}
    (y_i, x_{i1}) \indep (x_{i2}, \cdots x_{ip}).    
\end{align}
By \Cref{lemma:equivalent_condition_mle} the M-estimator does not exist if and only if
% $$
% \operatorname{Span}(\bx_2, \dots, \bx_n) \cap \mathcal{C} (\bx_1, \by) \ne \{\bm{0}_p\}
% $$
% where $\mathcal{C} (\bx_1, \bm{y}) \subset  \R^n$ is the cone defined as
%     \begin{align*}
%         \bm{p} \in \mathcal{C} (\bx_1, \by) \Leftrightarrow \exists t\in\R \text{ such that } \left( \forall i\in [n], \ 
%     tx_{i1} + p_i = \begin{cases}
%         \ge 0 & \text{under } \Omega_{\searrow}(y_i)\\
%         = 0 & \text{under } \Omega_{\vee}(y_i)\\
%         \le 0 &\text{under } \Omega_{\nearrow}(y_i)\\
%     \end{cases} 
%    \right)
%     \end{align*}
%     Here $\operatorname{Span}(\bx_2, \dots, \bx_n)$ is a rotationally invariant
%     random subspace in $\R^n$ with dimension $p-1$, and it is independent of $\mathcal{C} (\bx_1, \by)$. 
{
    $$
    \operatorname{Span}\bigl(X \bm{e}_2, \dots, X\bm{e}_p \bigr) \cap \mathcal{C}\bigl(X\bm{e}_1, \by\bigr) \ne \{\bm{0}_n\}, 
    $$
    where $\bm{e}_{i} (i=1, 2, \dots, p)$ are canonical basis vector in $\R^p$ and $\mathcal{C}\bigl(X\bm{e}_1, \by\bigr)$ is the cone in $\R^n$ defined as follows:
    % $\mathcal{C}(\bu, \by)$ for any $\bu, \by\in \R^n$ is the cone defined as 
    \begin{align*}
        \forall \bu, \by\in \R^n, \quad 
        \bm{p} \in \mathcal{C} (\bu, \by) \Leftrightarrow \exists t\in\R \text{ such that } \left( \forall i\in [n], \ 
    t u_i + p_i = \begin{cases}
        \ge 0 & \text{under } \Omega_{\searrow}(y_i)\\
        = 0 & \text{under } \Omega_{\vee}(y_i)\\
        \le 0 &\text{under } \Omega_{\nearrow}(y_i)\\
    \end{cases} 
   \right). 
    \end{align*}
    Here, $\operatorname{Span}\bigl(X \bm{e}_2, \dots, X\bm{e}_p \bigr)$ is the linear space spanned by the $(p-1)$ vectors $(X \bm{e}_i)_{i=2}^p$, which is a rotationally invariant random subspace in $\R^{n}$ with dimension $p-1$ since $X \bm{e}_i$ are independent standard normal. Furthermore, \eqref{eq:independent_cordinate} implies that $\operatorname{Span}\bigl(X \bm{e}_2, \dots, X\bm{e}_p \bigr)$ is independent of the cone $\mathcal{C}(X\bm{e}_1, \by)$. Below, we write $X\bm{e}_1=\bu$ for simplicity.  
}
Then, by Theorem I of \citet{amelunxen2014living}
we have that for \( \eta\in (0,1/2) \), conditionally on \(\bu, \by\), 
\begin{equation}\label{eq:finite_sample_phase_transition}
\begin{aligned}
    p-1 + \statdim(\mathcal{C}(\bu, \by)) \ge n + n^{1/2+\eta} & \Rightarrow
    \PP(\text{M-estimator exists} \mid \bu,\by) \to 0,
    \\
    p-1 + \statdim(\mathcal{C}(\bu, \by)) \le n - n^{1/2+\eta} & \Rightarrow
    \PP(\text{M-estimator exists} \mid \bu,\by) \to 1
\end{aligned}
\end{equation}
almost everywhere. Here $\statdim(\mathcal{C}(\bu, \by))$ is the statistical dimension of the cone given by
$$
\statdim(\mathcal{C}(\bu, \by)) = n-\E[\dist(\bm{g}, \mathcal{C}(\bu, \by))^2\mid X\bm{e}_1, \by ] \quad \text{with} \quad  \bg\sim N(\bm{0}_n, I_n),
$$
where the expectation is taken with respect to $\bg$, which is independent of $(\bu, \by)$. Substituting this to \eqref{eq:finite_sample_phase_transition} with $\eta$ set to $1/4$, we get 
\begin{align*}
  p-1 \ge \E[\dist(\bm{g}, \mathcal{C}(\bu, \by))^2\mid\bu, \by ] + n^{3/4} & \Rightarrow
    \PP(\text{M-estimator exists} \mid \bu,\by) \to 0\\
    p-1 \le  \E[\dist(\bm{g}, \mathcal{C}(\bu, \by))^2\mid\bu, \by ] -  n^{3/4} & \Rightarrow
    \PP(\text{M-estimator exists} \mid \bu,\by) \to 1
\end{align*}
With $p/n\to\delta^{-1}$, if we prove the convergence
$$
n^{-1} \E[\dist(\bm{g}, \mathcal{C}(\bu, \by))^2 \mid
\bu, \by ] \to^p  \delta_\infty^{-1}
$$
then we complete the proof. Below we prove this. Recall $\bu=X\bm{e}_1$ so that $(\bu, \by) = (u_i, y_i)_{i=1}^n \iid (U, Y)$. 
By the explicit gradient identities (B.7)-(B.9) in \cite{amelunxen2014living},
the Euclidean norm of the gradient of 
\(\bm g \mapsto \text{dist}(\bg, \mathcal{C})^2 \) is bounded by \( 2\|\bm g\|_2 \). Thus, conditionally on \( \bu, \by \), the Gaussian Poincar\'e inequality (cf. Theorem 3.20\cite{boucheron2013concentration}) yields
$$
\E[\dist(\bm{g}, \mathcal{C}(\bu, \by))^2|\bu, \by ] = \dist(\bm{g}, \mathcal{C}(\bu, \by))^2 + O_P(\sqrt{n}). 
$$
Here $\dist(\bm{g}, \mathcal{C}(\bu, \by))^2 = \inf_{\bm{p}\in \mathcal{C}(\bu, \by)} \|\bg - \bp\|^2$ is equal to the optimal value of 
\begin{align*}
   \inf_{(t, \bp)\in \R\times \R^n} \sum_{i=1}^n (g_i-p_i)^2  \quad \text{subject to} \quad  \left( \forall i\in [n], \ 
    tu_i + p_i = \begin{cases}
        \ge 0 & \text{under } \Omega_{\searrow}(y_i)\\
        = 0 & \text{under } \Omega_{\vee}(y_i)\\
        \le 0 &\text{under } \Omega_{\nearrow}(y_i)\\
    \end{cases} 
   \right)
\end{align*}
For each $t$, we can solve the minimization with respect to $\bp\in\R^n$. The optimal $\bp=\bp(t)$ is given by 
$$
p_i(t) = -t u_i + \begin{cases}
    (g_i+u_i t)_+ & \text{under $\Omega_\searrow(y_i)$}\\
    0  &\text{under $\Omega_\vee(y_i)$}\\
    (g_i+u_i t)_{-}  &\text{under $\Omega_\nearrow(y_i)$}.
\end{cases}
$$
Therefore, $\frac{1}{n}\dist(\bm{g}, \mathcal{C}(\bu, \by))^2$ can be written as 
\begin{align*}
    \frac{1}{n}\dist(\bm{g}, \mathcal{C}(\bu, \by))^2  &= \frac{1}{n}\inf_{t\in\R} \sum_{i=1}^n  (t u_i + g_i)_-^2 I\{\Omega_\searrow(y_i)\} + (t u_i + g_i)^2 I\{\Omega_\vee(y_i)\} + (t u_i + g_i)_{+}^2 I\{\Omega_\nearrow(y_i)\}\\ 
    &= \inf_{t\in\R} \varphi_n(t), 
\end{align*}
where
$$
\varphi_n(t) \coloneq  \frac{1}{n} \sum_{i=1}^n  (t u_i + g_i)_-^2 I\{\Omega_\searrow(y_i)\} + (t u_i + g_i)^2 I\{\Omega_\vee(y_i)\} + (t u_i + g_i)_{+}^2 I\{\Omega_\nearrow(y_i)\}.
$$
Notice that $\varphi_n(t)$ is a random and convex function, and by the law of large number, 
$$
\varphi_n(t)\to^p \varphi(t) = \E[(t U + G)_-^2 I\{\Omega_\searrow(Y)\}] + \E[(t U + G)^2 I\{\Omega_\vee(Y)\}] + \E[(t U + G)_{+}^2 I\{\Omega_\nearrow(Y)\}]
$$
for each $t\in\R$. By \Cref{lemma:threshold_exist}, we have 
$$
\delta_\infty^{-1} = \inf_{t\in\R} \varphi(t),
$$
and $\varphi(t)$ is coercive, i.e., $\lim_{|t|\to+\infty} \varphi(t)=+\infty$. Then, $\inf_{t\in\R}\varphi(t)$ can be reduced to $\min_{t\in K}\varphi(t)$ for a compact set $K\subset\R$, and if a convex function converges point-wisely then it converges uniformly over any compact set. 
This provides $\inf_{\in\R} \varphi_n (t)\to^P \inf_{t\in\R} \varphi(t) = \delta_\infty^{-1}$ and completes the proof.  
\end{proof}

\section{Set up for infinite-dimensional optimization problem}
\begin{lemma}\label{lm:objective_func_property}
   Suppose that $\ell_Y:\R\to \R$ is a proper lower semicontinuous convex function. Then the map
    $$
    \ml:  \R\times \mh \to  \R\cup\{+\infty\} , \quad (a, v) \mapsto \begin{cases}
        \E[\ell_Y(aU+v)] & \text{if } \E[|\ell_Y(aU+v)|] <+\infty\\
        +\infty  &\text{otherwise}
    \end{cases}
    $$
    is again a proper lower semicontinuous convex function. Furthermore, for all $(a, v)\in \operatorname{dom}\ml$, the subderivative at $(a, v)$ is given by
    \begin{align*}
        \partial_a \ml &= \bigl\{\E[U h]: h\in \partial \ell_Y(aU+v)\bigr\} \cap \R, \\
        \partial_v \ml &= \partial \ell_Y(aU+v) \cap \mh. 
    \end{align*}
\end{lemma}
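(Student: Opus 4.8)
The plan is to verify the three structural properties of $\mathcal{L}$ in turn and then to compute the two partial subdifferentials, the last being the technical heart. Throughout, I use that $(a,v)\mapsto aU+v$ is a bounded affine map $\R\times\mathcal{H}\to\mathcal{H}$ (because $U\in L^2$), that $\ell_Y$, being finite-valued and convex, is continuous, and that the integrability built into \Cref{assumption} supplies the integrable minorant $\ell_Y(u)\ge -D(Y)-b^{-1}|u|$ for all $u\in\R$. \emph{Convexity} is immediate: the effective domain $\operatorname{dom}\mathcal{L}=\{(a,v):\E[|\ell_Y(aU+v)|]<+\infty\}$ is convex since $aU+v$ is affine and $\ell_Y$ is convex, and on it the pointwise inequality coming from convexity of $\ell_Y$ integrates to convexity of $\mathcal{L}$. \emph{Properness}: the minorant gives $\ell_Y(aU+v)\ge -D(Y)-b^{-1}|a|\,|U|-b^{-1}|v|$, whose right-hand side is integrable ($\E[D(Y)]<\infty$, $U\in L^2$, $v\in L^2\subset L^1$), so $\E[(\ell_Y(aU+v))_-]<\infty$ for every $(a,v)$ and hence $\mathcal{L}>-\infty$ everywhere; and $\mathcal{L}$ is finite at, e.g., $a=0$ with $v$ the coordinate map $(G,U,Y)\mapsto G\in\mathcal{H}$, since $\E[\ell_Y(G)]<+\infty$ by \Cref{assumption}.

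For \emph{lower semicontinuity}, let $(a_n,v_n)\to(a,v)$ in $\R\times\mathcal{H}$; pass to a subsequence realizing $\liminf_n\mathcal{L}(a_n,v_n)$ and, using that $L^2$-convergence admits an a.e.-convergent subsequence dominated by a single $h\in\mathcal{H}$, to a further subsequence along which $v_n\to v$ a.e.\ with $|v_n|\le h$. Then $a_nU+v_n\to aU+v$ a.e., so $\ell_Y(a_nU+v_n)\to\ell_Y(aU+v)$ a.e.\ by continuity of $\ell_Y$, while along this subsequence $\ell_Y(a_nU+v_n)\ge -D(Y)-b^{-1}(\sup_n|a_n|)|U|-b^{-1}h$ is bounded below by a fixed integrable function. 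Fatou's lemma then yields $\liminf_n\mathcal{L}(a_n,v_n)\ge\E[\ell_Y(aU+v)]=\mathcal{L}(a,v)$.

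Now fix $(a,v)\in\operatorname{dom}\mathcal{L}$. The inclusions ``$\supseteq$'' are easy: if $w\in\mathcal{H}$ and $w\in\partial\ell_Y(aU+v)$ a.s., the pointwise subgradient inequality $\ell_Y(aU+v')\ge\ell_Y(aU+v)+w(v'-v)$ integrates (its right side lies in $L^1$ because $\ell_Y(aU+v)\in L^1$ and $w,v'-v\in L^2$) to $\mathcal{L}(a,v')\ge\mathcal{L}(a,v)+\langle w,v'-v\rangle$ for all $v'$, i.e.\ $w\in\partial_v\mathcal{L}(a,v)$; the analogous computation, with $v'$ replaced by $(a',v)$ and $w(v'-v)$ by $h(a'-a)U$, gives $\E[Uh]\in\partial_a\mathcal{L}(a,v)$ for every measurable selection $h$ of $\partial\ell_Y(aU+v)$ with $\E[Uh]\in\R$. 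For the hard inclusion $\partial_v\mathcal{L}(a,v)\subseteq\partial\ell_Y(aU+v)\cap\mathcal{H}$, suppose $w\in\partial_v\mathcal{L}(a,v)$ yet $\PP\big(w\notin\partial\ell_Y(aU+v)\big)>0$; since $\partial\ell_Y(t)$ is the interval between the left and right derivatives of $\ell_Y$ at $t$, up to a sign change we may assume $w$ exceeds the right derivative of $\ell_Y$ at $aU+v$ on an event of positive probability. As $s\mapsto s^{-1}\big(\ell_Y(aU+v+s)-\ell_Y(aU+v)\big)$ decreases to that right derivative when $s\downarrow0$, one can fix $m\in\mathbb N$ and $M<\infty$ so that
\[
B=\big\{m\big(\ell_Y(aU+v+\tfrac1m)-\ell_Y(aU+v)\big)<w\big\}\cap\big\{\max\!\big(|U|,|v|,|w|,|\ell_Y(aU+v)|,|\ell_Y(aU+v+\tfrac1m)|\big)\le M\big\}
\]
has positive probability. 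Putting $v'=v+\tfrac1m I\{B\}\in\mathcal{H}$ and integrating the bound $\ell_Y(aU+v+\tfrac1m)<\ell_Y(aU+v)+\tfrac1m w$, which holds with a uniform gap on the positive-measure set $B$ where everything is bounded, gives $\mathcal{L}(a,v')<\mathcal{L}(a,v)+\langle w,v'-v\rangle$, contradicting $w\in\partial_v\mathcal{L}(a,v)$. Hence $w\in\partial\ell_Y(aU+v)$ a.s., and since $w\in\mathcal{H}$ automatically this proves the inclusion (under the $C^1$ hypothesis of \Cref{assumption} the set $\partial\ell_Y(aU+v)$ is the singleton $\{\ell_Y'(aU+v)\}$ and the identity reads $\partial_v\mathcal{L}(a,v)=\{\ell_Y'(aU+v)\}\cap\mathcal{H}$).

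Finally, $\partial_a\mathcal{L}(a,v)$ is the subdifferential at $a$ of the one-variable convex function $a'\mapsto\mathcal{L}(a',v)$, hence the closed interval between its one-sided derivatives. The difference quotient $s^{-1}\big(\ell_Y((a+s)U+v)-\ell_Y(aU+v)\big)$ is nondecreasing in $s>0$, so monotone convergence identifies those one-sided derivatives with $\E[Uh^+]$ and $\E[Uh^-]$, where $h^{\pm}$ are the measurable selections of $\partial\ell_Y(aU+v)$ equal to the right/left derivative of $\ell_Y$ on $\{U>0\}$ and to the left/right derivative on $\{U<0\}$; since $\partial\ell_Y(aU+v)$ is an interval, $h_\lambda=\lambda h^++(1-\lambda)h^-$ is again a selection, so $\{\E[Uh]:h\text{ a selection of }\partial\ell_Y(aU+v),\ \E[Uh]\in\R\}$ sweeps this whole interval, while conversely $Uh^-\le Uh\le Uh^+$ a.s.\ for every selection $h$. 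I expect the main obstacle to be the hard inclusion for $\partial_v\mathcal{L}$: the perturbation direction must be produced through a measurable selection of the defect set and then localized on the truncation set $B$ so that the perturbed point stays in $\mathcal{H}$ with all relevant expectations finite while retaining a strict, positive-measure decrease; this also tacitly relies on joint measurability of $(y,t)\mapsto\ell_y(t)$ and of the one-sided derivatives, which should be recorded.
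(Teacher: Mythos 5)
Your proposal is correct in substance, but it takes a genuinely different route from the paper: the paper's entire proof is a one-line appeal to Proposition 16.63 of \citet{bauschke2017convex}, the standard result on integral functionals on $L^2$ (properness, lower semicontinuity, convexity, and the identification of the subdifferential with measurable selections of the pointwise subdifferential), whereas you reconstruct that result from scratch and additionally handle the composition with the affine map $(a,v)\mapsto aU+v$ and the two partial subdifferentials explicitly. What your route buys is transparency about exactly which hypotheses are used: the integrable minorant $\ell_Y(u)\ge -D(Y)-b^{-1}|u|$ and $\E[|\ell_Y(G)|]<+\infty$ from \Cref{assumption} (the lemma's literal hypothesis ``$\ell_Y$ proper lsc convex'' alone would not give properness or rule out the $\E[(\ell_Y)_-]=\infty$ pathology, a point the citation also leaves implicit), and the joint measurability of $(y,t)\mapsto\ell_y(t)$ and of the one-sided derivatives, which you rightly flag. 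Your localization-on-a-bounded-event argument for the hard inclusion $\partial_v\ml\subseteq\partial\ell_Y(aU+v)\cap\mh$ is sound and is essentially the standard proof of the cited proposition. Two small caveats, neither fatal and both shared with the paper's statement: under item 1 of \Cref{assumption} the loss is $C^1$, so the selection machinery for $\partial_a$ collapses to the singleton $\{\E[U\ell_Y'(aU+v)]\}\cap\R$; and your monotone-convergence identification of the one-sided derivatives of $a'\mapsto\ml(a',v)$ with $\E[Uh^{\pm}]$ needs the difference quotient to be integrable at some $s_0>0$ on each side, i.e., it is only guaranteed when $(a\pm s_0,v)\in\operatorname{dom}\ml$; at a boundary point of the domain in $a$ the corresponding one-sided derivative is $+\infty$ and the ``interval swept by selections'' description must be interpreted with care (this edge case is equally unaddressed by the paper's one-line citation, and is harmless where the lemma is actually invoked, namely at KKT points where $\ell_Y'(a_*U+v_*)\in\mh$).
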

\begin{proof}
    Proposition 16.63 in \citet{bauschke2017convex}. 
\end{proof}

\begin{lemma}[{Lemma A.1 of \citet{bellec2023existence}}]\label{lm:constraint_property}
    Define $\mg:\mh\to\R$ as 
    $$
    \mg:\mh \to \R, \quad v\mapsto \|v\|-\frac{\E[vG]}{\sqrt{1-\delta^{-1}}}.
    $$
    Then $\mg$ is convex, Lipschitz, and finite valued. Furthermore, 
    $\mg$ is Fr\'echet differentiable at $\mh \setminus \{0\}$ in the sense that 
    $
    \mg(v+h) = \mg(v) + \E[\nabla\mg(v) h] + o(\|h\|)
    $ for all $\|v\|>0$, 
    where the gradient $\nabla \mg(v)$ is given by 
    $$
    \nabla\mg(v) = \frac{v}{\|v\|} - \frac{G}{\sqrt{1-\delta^{-1}}}.
    $$
\end{lemma}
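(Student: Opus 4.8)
The plan is to split $\mg$ into its norm part and its linear part: write $\mg = \mg_1 + \mg_2$ with $\mg_1(v) = \|v\|$ the Hilbert norm on $\mh$ and $\mg_2(v) = -\langle v, G\rangle/\sqrt{1-\delta^{-1}}$, noting that $\sqrt{1-\delta^{-1}} > 0$ is well-defined because $\delta > 1$. First I would record that $G$ (viewed as the coordinate map $(G,U,Y)\mapsto G$) lies in $\mh$ with $\|G\| = 1$, since $G\sim N(0,1)$. Hence Cauchy--Schwarz gives $|\langle v, G\rangle| \le \|v\|$, so $\mg_2$ is a bounded linear functional on $\mh$ with operator norm $1/\sqrt{1-\delta^{-1}}$. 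Being linear and bounded, $\mg_2$ is finite-valued, convex, Lipschitz with constant $1/\sqrt{1-\delta^{-1}}$, and Fr\'echet differentiable at every point with constant gradient $\nabla\mg_2(v) = -G/\sqrt{1-\delta^{-1}}$.

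Next I would treat $\mg_1$. It is finite on $\mh$ by definition of $\mh$, convex by the triangle inequality together with positive homogeneity, and $1$-Lipschitz by the reverse triangle inequality $\bigl|\,\|v\| - \|w\|\,\bigr| \le \|v - w\|$. For Fr\'echet differentiability at $v$ with $\|v\| > 0$, set $x_h = \bigl(2\langle v,h\rangle + \|h\|^2\bigr)/\|v\|^2$, so that $\|v + h\| = \|v\|\sqrt{1 + x_h}$ and $x_h = O(\|h\|)$ as $\|h\|\to 0$ with $v$ fixed. Using $\sqrt{1+x} = 1 + x/2 + O(x^2)$ near $x = 0$ (valid once $\|h\| \le \|v\|/2$, say) I obtain
\[
\|v+h\| = \|v\| + \frac{\langle v, h\rangle}{\|v\|} + O(\|h\|^2) = \|v\| + \Bigl\langle \frac{v}{\|v\|},\, h\Bigr\rangle + o(\|h\|),
\]
so $\nabla\mg_1(v) = v/\|v\|$.

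Finally I would add the two pieces back together: a sum of convex functions is convex, a sum of finite-valued functions is finite-valued, and a sum of Lipschitz functions is Lipschitz with constant the sum of the constants, so $\mg$ is convex, finite-valued, and Lipschitz with constant $1 + 1/\sqrt{1-\delta^{-1}}$; moreover, for $\|v\| > 0$,
\[
\mg(v+h) = \mg(v) + \Bigl\langle \frac{v}{\|v\|} - \frac{G}{\sqrt{1-\delta^{-1}}},\, h \Bigr\rangle + o(\|h\|),
\]
which is the asserted Fr\'echet expansion with $\nabla\mg(v) = v/\|v\| - G/\sqrt{1-\delta^{-1}}$. I do not expect a real obstacle here: the only point requiring a little care is the scalar Taylor expansion of the norm, where one checks the remainder is genuinely $o(\|h\|)$ for fixed $v \ne 0$; everything else is immediate. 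This reproduces Lemma A.1 of \citet{bellec2023existence}.
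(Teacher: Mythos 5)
Your proof is correct. Note that the paper itself does not prove this lemma: it is stated as a direct quotation of Lemma~A.1 of \citet{bellec2023existence}, so there is no internal argument to compare against; your write-up is a self-contained substitute for that citation. The argument you give is the standard one and all steps check out: the linear part $v\mapsto -\E[vG]/\sqrt{1-\delta^{-1}}$ is a bounded linear functional because $G\in\mh$ with $\|G\|=1$ (and $\delta>1$ makes the denominator a positive real), hence convex, Lipschitz with constant $1/\sqrt{1-\delta^{-1}}$, and everywhere differentiable with constant gradient; the norm part is finite, convex, $1$-Lipschitz by the reverse triangle inequality, and your expansion $\|v+h\|=\|v\|\sqrt{1+x_h}$ with $x_h=(2\langle v,h\rangle+\|h\|^2)/\|v\|^2=O(\|h\|)$ for fixed $v\neq 0$ gives a remainder that is in fact $O(\|h\|^2)=o(\|h\|)$, so $\nabla\|\cdot\|(v)=v/\|v\|$. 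Summing the two pieces yields exactly the claimed gradient $v/\|v\|-G/\sqrt{1-\delta^{-1}}$ and the Lipschitz constant $1+1/\sqrt{1-\delta^{-1}}$. An equally quick alternative for the norm part is to differentiate $v\mapsto\sqrt{\langle v,v\rangle}$ by the chain rule, since $v\mapsto\langle v,v\rangle$ is Fr\'echet differentiable with derivative $h\mapsto 2\langle v,h\rangle$ and $t\mapsto\sqrt{t}$ is smooth away from $0$; either way, no gap remains.
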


\begin{lemma}[Existence of Lagrange multiplier]\label{lagrange}
    Assume $\E[|\ell_Y(G)|]<+\infty$. Then $(a_*, v_*)\in \operatorname{dom}\ml$ solves the constrained optimization problem:
    $$
    \min_{(a, v)\in \R\times \mh} \ml(a, v) \quad \text{subject to} \quad \mg(v)\le 0
    $$
    if and only if there exists an associated Lagrange multiplier $\mu_*\ge 0$ such that the KKT condition below is satisfied: 
    \begin{align}\label{eq:KKT}
        -\mu_* \partial \mg(v_*)\cap \partial_v \ml(a_*, v_*)\ne \emptyset, \quad 0 \in \partial_a \ml(a_*, v_*), \quad \mu_* \mg(v_*) = 0, \quad \mg(v_*)\le 0. 
    \end{align}
    Now we further assume that
    \begin{itemize}
        \item $\E[|\inf_u \ell_Y(u)|] <+\infty$.  
        \item $\ell_Y$ is strictly convex. 
        \item $\PP(\Omega_\searrow) + \PP(\Omega_\nearrow)>0$ and $\ell_Y(\cdot)$ is not constant with probability $1$. 
        \item There exists a positive constant $b$ and a positive random variable $D(Y)$ such that under $\Omega_{\vee}$, 
        $$
        \ell_Y(u) \ge b^{-1}|u| - D(Y), \quad \forall u\in \R
        $$
        with $\E[D(Y)^2  I\{\Omega_\vee\}]<+\infty$. 
    \end{itemize}
    Then, the Lagrange multiplier is always strictly positive $\mu_*>0$ and the constraint is binding, i.e., $\mathcal{G}(v_*)=0$. 
\end{lemma}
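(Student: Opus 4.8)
\textbf{Step 1: the KKT characterization.} The first half, that $(a_*,v_*)$ solves the constrained problem if and only if there is $\mu_*\ge 0$ satisfying \eqref{eq:KKT}, is an application of a standard Lagrange-multiplier/KKT theorem for convex minimization in Hilbert space. Because $\ml$ is proper lsc convex (\Cref{lm:objective_func_property}) and $\mg$ is finite-valued continuous convex (\Cref{lm:constraint_property}), Slater's condition is trivial here: $v=0$ gives $\mg(0)=0$, and more usefully $\mg(tG)<0$ for small $t>0$ since $\mg(tG)=t\|G\|(1-1/\sqrt{1-\delta^{-1}})<0$, so there is a strictly feasible point with finite objective (take $a=0$, $v=tG$, and use $\E[|\ell_Y(tG)|]<\infty$ via the hypothesis $\E[|\ell_Y(G)|]<\infty$ and convexity). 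Then Proposition~27.31 of \citet{bauschke2017convex}, invoked exactly as in the main text around \eqref{eq:KKT_main}, gives the equivalence, and the subdifferential sum rule plus \Cref{lm:objective_func_property} converts the abstract optimality condition into the stated componentwise form. I would only sketch this, citing the proposition.

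\textbf{Step 2: $\mu_*>0$, by contradiction.} Suppose $\mu_*=0$. Then the KKT system forces $0\in\partial_v\ml(a_*,v_*)$ and $0\in\partial_a\ml(a_*,v_*)$, i.e.\ by \Cref{lm:objective_func_property} there is $h\in\partial\ell_Y(a_*U+v_*)\cap\mh$ with $h=0$ a.s.\ and $\E[Uh]=0$ (automatic). So $0\in\partial\ell_Y(a_*U+v_*)$ almost surely, meaning $a_*U+v_*$ is almost surely a global minimizer of the (random) convex function $\ell_Y$. Now I case on the events $\Omega_\nearrow,\Omega_\searrow,\Omega_\vee$. On $\Omega_\nearrow\cup\Omega_\searrow$ the function $\ell_Y$ is strictly monotone hence has \emph{no} global minimizer, so $\PP(\Omega_\nearrow)+\PP(\Omega_\searrow)>0$ already yields a contradiction. (Here the hypotheses that $\ell_Y$ is strictly convex and not constant, together with $\PP(\Omega_\searrow)+\PP(\Omega_\nearrow)>0$, guarantee a positive-probability event on which $\ell_Y$ is strictly monotone; strict convexity is what lets us conclude from "not coercive and not constant" that $\ell_Y$ is strictly monotone as set up before \Cref{assum:so_not_trivial}.) Thus $\mu_*>0$, and then $\mu_*\mg(v_*)=0$ from \eqref{eq:KKT} forces $\mg(v_*)=0$.

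\textbf{Step 3: why the remaining hypotheses are needed (the real obstacle).} The subtlety I anticipate is that Step~2 as written only uses $\PP(\Omega_\nearrow)+\PP(\Omega_\searrow)>0$ and strict convexity; the hypotheses about $\E[|\inf_u\ell_Y(u)|]<\infty$ and the one-sided coercivity bound $\ell_Y(u)\ge b^{-1}|u|-D(Y)$ under $\Omega_\vee$ with $\E[D(Y)^2 I\{\Omega_\vee\}]<\infty$ are there to guarantee that a minimizer $(a_*,v_*)$ \emph{exists} in the first place, i.e.\ that the problem is not vacuous, and to ensure $\operatorname{dom}\ml$ is nonempty so that the KKT statement has content---or alternatively they are used to rule out a degenerate scenario where the constrained infimum is not attained yet one tries to apply the KKT conditions to a non-minimizer. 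I would verify carefully which of these is actually invoked: most likely the integrability hypotheses ensure $(0,G)\in\operatorname{dom}\ml$ or similar so that $\inf\ml<\infty$, and the coercivity-under-$\Omega_\vee$ bound is needed precisely when $\PP(\Omega_\vee)>0$ to control the $\Omega_\vee$-part of $a_*U+v_*$ so that the contradiction in Step~2 (a global minimizer of a strictly monotone function) is reachable without integrability pathologies. The cleanest route is: assume $\mu_*=0$, deduce $a_*U+v_*\in\argmin\ell_Y$ a.s., observe $\PP(\argmin\ell_Y=\emptyset)\ge\PP(\Omega_\nearrow\cup\Omega_\searrow)>0$, contradiction---and then separately note that the extra hypotheses are what make Step~1 non-vacuous (a minimizer exists, e.g.\ via \Cref{lm:minimizer_exist}-type coercivity when $\delta>\delta_\infty$, or are simply the standing assumptions under which the lemma is later applied). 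I would present Step~2 in full and state the role of the other hypotheses precisely in a short remark rather than re-deriving existence of a minimizer here.
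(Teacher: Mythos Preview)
Your Step~1 matches the paper (which simply takes $t=1$: $\mg(G)=1-(1-\delta^{-1})^{-1/2}<0$ and $\ml(0,G)$ finite by hypothesis; no small-$t$ detour is needed). Your Step~2 is correct but genuinely more direct than the paper's argument. After assuming $\mu_*=0$, the paper observes that $(a_*,v_*)$ is then an unconstrained minimizer of $\ml$ and constructs an explicit test sequence $v_n\in\mh$ (equal to $u_{\min}I\{\ell_Y(u_{\min})\le n\}$ on $\Omega_\vee$, to $-n$ on $\Omega_\nearrow$, and to $n$ on $\Omega_\searrow$), uses dominated convergence to show $\ml(0,v_n)\to\E[\inf_u\ell_Y(u)]$, and deduces $\ell_Y(a_*U+v_*)=\inf_u\ell_Y(u)$ a.s.; the same contradiction on $\Omega_\nearrow\cup\Omega_\searrow$ then follows. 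This is exactly where the hypotheses you were puzzled about in Step~3 enter: the coercivity bound under $\Omega_\vee$ with $\E[D(Y)^2 I\{\Omega_\vee\}]<\infty$ is needed to certify $v_n\in\mh$ (it bounds $|u_{\min}|$ on $\{\ell_Y(u_{\min})\le n\}$), and $\E[|\inf_u\ell_Y(u)|]<\infty$ supplies the dominating function. Your shortcut through the subdifferential formula of \Cref{lm:objective_func_property} bypasses the test-sequence construction entirely and, as you correctly sensed, needs only strict convexity and $\PP(\Omega_\nearrow\cup\Omega_\searrow)>0$; the remaining listed hypotheses are artifacts of the paper's proof route, not of yours. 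The tradeoff: the paper's approach works purely at the level of the variational problem and would survive a weaker description of $\partial_v\ml$, while yours extracts full mileage from the pointwise subdifferential characterization of integral functionals already recorded in \Cref{lm:objective_func_property}.
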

\begin{proof}
    First we verify Slater's condition:
$$
\operatorname{lev}_{\leq 0} \mg \subseteq \operatorname{int} \operatorname{dom} \mg, \quad 
\operatorname{dom} \ml \cap  \operatorname{lev}_{< 0} \mg \neq \emptyset. 
$$
    Since $\mg(v)=\|v\|-\E[vG]/\sqrt{1-\delta^{-1}}$ is finite valued the first condition $\operatorname{lev}_{\leq 0} \mg \subseteq \operatorname{int} \operatorname{dom} \mg$ immediately follows. As for the second condition, $(a, v) = (0, G)$ satisfies $\mg(G) = 1-(1-\delta^{-1})^{-1/2} <0$ and $|\ml(a, v)| = |\E[\ell_Y(G)]|\le \E[|\ell_Y(G)|]<+\infty$ by the assumption. Therefore, the objective function and the constraint $(\ml, \mg)$ satisfy Slater's condition. 
   With Slater' condition, the ``if and only if ~'' part follows from Proposition 27.21 of \citet{bauschke2017convex}. 

Let us show $\mu_*>0$. Suppose $\mu_*=0$. Then, $(a_*, v_*)$ solves $\min_{(a,v)}\E[\ell_Y(aU+v)]$.
   Now, for any $n\in\mathbb{N}$, define $v_n \in\mh$ as 
    \begin{align*}
        v_n = \begin{cases}
          u_{\min} I\{\ell_Y(u_{\min})\le n\}    & \text{under } \Omega_\vee\\
            -n & \text{under } \Omega_\nearrow\\
            n & \text{under } \Omega_\searrow
        \end{cases} \quad \text{where } u_{\min}\in \argmin_{u\in\R} \ell_Y(u)
    \end{align*}
    Note that $v_n$ is in $\mh$ since the coercivity assumption implies that under the event  $\Omega_{\vee}$,
    $$
    |u_{\min}| I\{\ell_Y(u_{\min})\le n\} \le b (n + D(Y)) I\{\ell_Y(u_{\min})\le n\}
    $$
    and the RHS has a finite second moment under $\Omega_{\vee}$ by the moment assumption on $D(Y)$. 
 Evaluating the objective function $\ml$ at $(a, v)=(0, v_n)$, by the optimality of $(a_*, v_*)$, we are left with
 \begin{align*}
    \E[\ell_Y(a_*U+v_*)] &\le \E[\ell_Y(0\cdot U + v_n)]\\
    &= \E[\ell_Y(u_{\min}) I\{\ell_Y(u_{\min}) \le n\}I\{ \Omega_\vee\}] \\
    &+  \E[\ell_Y(0) I\{\ell_Y(u_{\min}) > n\}I\{ \Omega_\vee\}] \\
    &+ \E[\ell_Y(-n) I\{\Omega_\nearrow\}] \\
    &+ \E[\ell_Y(n) I\{\Omega_\searrow\}].
 \end{align*}
 Note that each integrand on RHS is uniformly bounded by $|\inf_u \ell_Y(u)| + |\ell_Y(0)|$, where $|\inf_u \ell_Y(u)|$ and $|\ell_Y(0)|$  have finite moments by the assumption and \Cref{lm:ell_0_bounded}. Thus, by the dominated convergence theorem, taking the limit $n\to+\infty$, we obtain
 \begin{align*}
    \E[\ell_Y(a_* U + v_*)] &\le \E[\min_u \ell_Y(u)I\{\Omega_\vee\}] + 0 + \E[\inf_u \ell_Y(u)I\{\Omega_\nearrow\}] + \E[\inf_u \ell_Y(u)I\{\Omega_\searrow\}] \\
    &= \E[\inf_u \ell_Y(u)]
 \end{align*}
and $\E[\ell_Y(a_* U + v_*)-\inf_u\ell_Y(u)] \le 0$. Since the integrand $\ell_Y(a_* U + v_*)-\inf_u\ell_Y(u)$ is non-negative, we get
$$
\ell_Y(a_* U + v_*)=\inf_u\ell_Y(u)
$$
with probability $1$. Let us consider the event $\Omega_{\nearrow}$. By the strict convexity of $\ell_Y(\cdot)$, under the event $\Omega_{\nearrow}$, we have always $\ell_Y(x)>\lim_{t\to+\infty}\ell_Y(-t) = \inf_u \ell_Y(u)$ for all $x$. This means that $\ell_Y(a_* U + v_*)=\inf_u\ell_Y(u)$ cannot occur under $\Omega_{\nearrow}$, and hence $\PP(\Omega_{\nearrow})=0$. By the same argument, we get  $\PP(\Omega_{\searrow})=0$. This is a contradiction with $\PP(\Omega_{\nearrow})+\PP(\Omega_{\searrow})>0$, so we must have $\mu_*>0$.

%   Since the three integrands are non-negative, we must have 
%    $$
%    (\ell_Y(a_*U+v_*) - \ell_Y(v_{\min})) I_{\Omega_\vee} = (\ell_Y(a_*U+v_*) - c_{-\infty}) I\{\Omega_\nearrow(Y)\} = (\ell_Y(a_*U+v_*) -c_{+\infty}) I\{\Omega_\searrow(Y)\} = 0. 
%    $$
%    Moreover, 
%    $\ell_Y(a_*U+v_*) - c_{-\infty}$ is strictly positive under $I\{\Omega_\nearrow(Y)\}$, and $\ell_Y(a_*U+v_*) - c_{+\infty}$ is strictly positive under $I\{\Omega_\searrow(Y)\}$. This implies $\PP(\Omega_\searrow)=\PP(\Omega_\nearrow)=0$, which is a contradiction with $\PP(\Omega_\searrow) + \PP(\Omega_\nearrow)>0$. Therefore, we must have $\mu_*>0$. 

%    $$
%     a_*U + v_* = v_{\min}(Y)
%    $$
%    $\mg(v_*)=0$, we have $v_*=0$. $a_*U = v_{\min}(Y)$.  
%    $a_* = 0$. $v_{\min}(Y) = 0$. $\ell_Y$ is minimized at $0$ under $\Omega_\vee$. $v_*=0$. This implies $0\in \argmin \ell_Y$, which is a contradiction with 
% $\PP(0\in \argmin \ell_Y)) < 1$
\end{proof}

\begin{lemma}\label{lm:ell_0_bounded}
    Suppose $\E[\ell_Y(G)_+] < +\infty$ and $\E[\inf_u \ell_Y(u)_{-}] > - \infty$ where $G\sim N(0,1)$ independent of $Y$. Then, $\E[|\ell_Y(0)|]<+\infty$.
\end{lemma}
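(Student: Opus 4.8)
The plan is to split $\E[|\ell_Y(0)|]=\E[(\ell_Y(0))_+]+\E[(\ell_Y(0))_-]$ and bound each of the two summands using one of the two hypotheses. The negative part is immediate: since $\ell_Y$ is real valued we have $\ell_Y(0)\ge \inf_u\ell_Y(u)$ for every realization of $Y$, hence $(\ell_Y(0))_-\le (\inf_u\ell_Y(u))_-$, and the hypothesis on $\inf_u\ell_Y(u)$ (which ensures the negative part $(\inf_u\ell_Y(u))_-$ is integrable) gives $\E[(\ell_Y(0))_-]\le \E[(\inf_u\ell_Y(u))_-]<+\infty$.

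For the positive part the idea is to combine convexity of $\ell_Y$ with the symmetry of the standard Gaussian. Writing $0=\tfrac12 G+\tfrac12(-G)$ and using midpoint convexity of $\ell_Y$ pointwise in $(Y,G)$,
\[
\ell_Y(0)\ \le\ \tfrac12\,\ell_Y(G)+\tfrac12\,\ell_Y(-G).
\]
Since $x\mapsto x_+$ is nondecreasing and subadditive, applying it to both sides and then taking expectations gives $\E[(\ell_Y(0))_+]\le \tfrac12\E[(\ell_Y(G))_+]+\tfrac12\E[(\ell_Y(-G))_+]$. Now $G\sim N(0,1)$ is independent of $Y$ and symmetric, so $(Y,-G)$ has the same law as $(Y,G)$ and therefore $\E[(\ell_Y(-G))_+]=\E[(\ell_Y(G))_+]$; the first hypothesis then yields $\E[(\ell_Y(0))_+]\le \E[(\ell_Y(G))_+]<+\infty$. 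Adding the two bounds completes the proof.

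I do not expect a genuine obstacle here. The one subtlety is that one cannot simply invoke conditional Jensen in the form $\E[\ell_Y(G)\mid Y]\ge \ell_Y(\E[G\mid Y])=\ell_Y(0)$, because a priori $\ell_Y(G)$ is not known to be integrable---only $(\ell_Y(G))_+$ is (its negative part would first have to be controlled via $\inf_u\ell_Y(u)$). Working with positive parts throughout, as above, avoids this issue, and the only structural fact used about the Gaussian is its symmetry together with independence from $Y$.
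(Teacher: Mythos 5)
Your proof is correct and follows essentially the same route as the paper: split $|\ell_Y(0)|$ into positive and negative parts, bound the negative part by $(\inf_u\ell_Y(u))_-$, and transfer the positive-part bound from $0$ to $G$ via convexity of $u\mapsto(\ell_Y(u))_+$. The only cosmetic difference is that you use the symmetry $G\overset{d}{=}-G$ together with midpoint convexity, whereas the paper applies Jensen's inequality at $\E[G]=0$ to the convex function $(\ell_Y)_+$ — the same device you correctly identify as the way around the integrability issue.
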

\begin{proof}
Note 
$$
|\ell_Y(0)| \le \max(-(\inf_{u}\ell_Y(u))_{-}, \ell_Y(0)_+) \le -(\inf_u \ell_Y(u))_{-} + \ell_Y(0)_+
$$
and the RHS has a finite moment by the assumption and Jensen's inequality $\E[\ell_Y(0)_+]  = \E[\ell_Y(\E[G])_+] \le \E[\ell_Y(G)_+] <+\infty$. Here we have used the fact that $u \mapsto (\ell_Y(u))_{+}$ is convex and $G\sim N(0,1)$ is independent of $Y$. 
\end{proof}

\section{Equivalence between nonlinear system and infinite-dimensional optimization problem: \Cref{theorem:equivalence_system_optimization}}

\begin{lemma}\label{lm:prox_in_h}
    Suppose $\E[
        |\inf_u\ell_Y(u)
        |] <+\infty$ and $\E[|\ell_Y(G)]<+\infty$. Then for any $a, \sigma, \gamma\in \R \times \R_{>0}\times \R_{>0}$, 
        $\prox[\gamma \ell_Y](a U+\sigma G) \in \mh$. 
\end{lemma}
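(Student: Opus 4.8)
The plan is to prove $\prox[\gamma\ell_Y](aU+\sigma G)\in\mh$, i.e.\ that $\E\big[\prox[\gamma\ell_Y](aU+\sigma G)^2\big]<+\infty$, by using the $1$-Lipschitz property of the proximal operator to reduce the whole estimate to the single reference point $0$. First I would record that $z\coloneq aU+\sigma G$ lies in $\mh$ with $\E[z^2]=a^2+\sigma^2$, using $\E[U^2]=\E[G^2]=1$ and $\E[UG]=0$ (since $G\indep (U,Y)$ and both are centered). For each realization of $Y$, the map $x\mapsto \ell_Y(x)+\tfrac1{2\gamma}(x-z)^2$ is strongly convex and coercive (as $\ell_Y$ is proper lower semicontinuous convex, hence minorized by an affine function), so the prox is well defined, measurable in $Y$, and $1$-Lipschitz in $z$ (it is the proximal map of a convex function). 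Consequently $\big|\prox[\gamma\ell_Y](z)\big|\le \big|\prox[\gamma\ell_Y](0)\big|+|z|$, hence $\prox[\gamma\ell_Y](z)^2\le 2\,\prox[\gamma\ell_Y](0)^2+2z^2$, and it suffices to bound $\E\big[\prox[\gamma\ell_Y](0)^2\big]$.

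Next I would control $x_*\coloneq \prox[\gamma\ell_Y](0)=\argmin_{x\in\R}\{\ell_Y(x)+x^2/(2\gamma)\}$ by comparing its objective value with the value at $x=0$: optimality gives $\tfrac1{2\gamma}x_*^2\le \ell_Y(0)-\ell_Y(x_*)\le \ell_Y(0)-\inf_u\ell_Y(u)\le |\ell_Y(0)|+|\inf_u\ell_Y(u)|$, where the second inequality uses $\ell_Y(x_*)\ge\inf_u\ell_Y(u)$. Taking expectations, $\E[|\inf_u\ell_Y(u)|]<+\infty$ holds by hypothesis, and $\E[|\ell_Y(0)|]<+\infty$ by \Cref{lm:ell_0_bounded}, whose hypotheses follow from the two assumed integrability conditions ($\E[|\ell_Y(G)|]<+\infty$ controls $\E[\ell_Y(G)_+]$, and $\E[|\inf_u\ell_Y(u)|]<+\infty$ controls the negative part of $\inf_u\ell_Y(u)$). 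Hence $\E[x_*^2]<+\infty$, and combining with the previous display, $\E\big[\prox[\gamma\ell_Y](aU+\sigma G)^2\big]\le 2\E[x_*^2]+2(a^2+\sigma^2)<+\infty$, which is the claim.

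The only genuinely non-routine step is the reduction to the reference point $0$. The naive alternative would be to apply the bound $(\prox[\gamma\ell_Y](z)-z)^2\le 2\gamma(\ell_Y(z)-\inf_u\ell_Y(u))$ directly at $z=aU+\sigma G$, but this requires $\E[\ell_Y(aU+\sigma G)_+]<+\infty$, which does \emph{not} obviously follow from $\E[|\ell_Y(G)|]<+\infty$: only $G$ is independent of $Y$, whereas $U$ is correlated with $Y$ and scaled by an arbitrary $a$, so the integrability of $\ell_Y(G)$ cannot in general be transferred to $\ell_Y(aU+\sigma G)$ (it can fail for losses of super-exponential growth). Anchoring the prox at $0$ via $1$-Lipschitzness sidesteps this, because $\prox[\gamma\ell_Y](0)$ depends on $\ell_Y$ only through $\ell_Y(0)$ and $\inf_u\ell_Y(u)$, both of which are controlled by the stated hypotheses through \Cref{lm:ell_0_bounded}; everything else is elementary convexity.
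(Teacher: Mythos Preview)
Your proof is correct. The paper's proof is closely related but takes a more direct path: rather than reducing to $\prox[\gamma\ell_Y](0)$ via the $1$-Lipschitz property, it compares the prox objective at $z=aU+\sigma G$ directly with the competitor $u=0$, obtaining
\[
\tfrac{1}{2\gamma}(z-p_*)^2+\ell_Y(p_*)\le \tfrac{1}{2\gamma}z^2+\ell_Y(0),
\]
hence $(z-p_*)^2\le z^2+2\gamma(\ell_Y(0)-\inf_u\ell_Y(u))$, whose right-hand side has finite expectation by the same use of \Cref{lm:ell_0_bounded}. This immediately gives $z-p_*\in\mh$ and thus $p_*\in\mh$. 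Your ``naive alternative'' in the last paragraph, which would use $\ell_Y(z)$ on the right-hand side, is indeed problematic, but that is not what the paper does: the paper, like you, anchors at $\ell_Y(0)$ rather than $\ell_Y(z)$, it just does so inside the original prox rather than after a Lipschitz reduction. In effect both arguments hinge on the same inequality, and the paper's version is one step shorter (no need to invoke nonexpansiveness).
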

\begin{proof}
Denote $\prox[\gamma \ell_Y](aU+\sigma G)$ by $p_*$. 
Since $p_*$ minimizes $u\mapsto \frac{1}{2\gamma} (aU+\sigma G -u)^2 +  \ell_Y(u)$, we have the upper estimate
$$
\frac{1}{2\gamma} (aU + \sigma G - p_*)^2 + \ell_Y(p_*) \le \frac{1}{2\gamma} (aU+\sigma G)^2 + \ell_Y(0).
$$
With $\ell_Y(p)\ge \inf_u \ell_Y(u)$, this implies 
$$
(aU+\sigma G - p_*)^2 \le (a U+\sigma G)^2 + 2\gamma(\ell_Y(0)-\inf_u \ell_Y(u)),
$$
where the RHS has a finite moment. This means $aU+\sigma G - p_* \in \mh$ and $p_*\in \mh$.
\end{proof}

\begin{lemma}\label{lm:optimization_to_system}
    Suppose  $(a_*, v_*)\in\R\times \mh$ solves the optimization problem with $\|v_*\|>0$. Let us take a Lagrange multiplier $\mu_*>0$ satisfying the KKT condition \eqref{eq:KKT}. Define the positive scalar $(\gamma_*, \sigma_*)\in\R_{>0}^2$ by 
    $$
    \sigma_* = \frac{\|v_*\|}{\sqrt{1-\delta^{-1}}}, \quad \gamma_* = \frac{\sigma_* \sqrt{1-\delta^{-1}}}{\mu_*}.
    $$
    Then $v_*$ takes the form of
    $$
    v_* = \prox[\gamma_* \ell_Y](a_*U+\sigma_*G)-a_*U
    $$
    and $(a_*, \sigma_*, \gamma_*)$ solves the nonlinear system of equations:
    \begin{align}
            \gamma^{-2} \delta^{-1} \sigma^2 &=  \E[\ell_Y'(\prox[\gamma\ell_Y](\a U+\sigma G))^2] \label{eq:system_1}\\
            0 &= \E[U \ell_Y'(\prox[\gamma \ell_Y](\a U+\sigma G))] \label{eq:system_2} \\
            \sigma(1-\delta^{-1}) &= \E[G \prox[\gamma \ell_Y](\a U + \sigma G)] \label{eq:system_3}
    \end{align}
\end{lemma}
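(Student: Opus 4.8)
The plan is to read off the three equations directly from the KKT system of \Cref{lagrange}, after turning the abstract subdifferential inclusion into a pointwise almost-sure identity and recognizing the proximal operator. First I would use that $\|v_*\|>0$: by \Cref{lm:constraint_property}, $\mg$ is Fréchet differentiable at $v_*$ with $\nabla\mg(v_*) = v_*/\|v_*\| - G/\sqrt{1-\delta^{-1}}$, so $\partial\mg(v_*)$ is the singleton $\{\nabla\mg(v_*)\}$; and by \Cref{lm:objective_func_property} together with the $C^1$ assumption on $\ell_Y$, $\partial_v\ml(a_*,v_*) = \{\ell_Y'(a_*U+v_*)\}\cap\mh$. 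Hence the first KKT condition $-\mu_*\partial\mg(v_*)\cap\partial_v\ml(a_*,v_*)\neq\emptyset$ forces, almost surely,
$$
\ell_Y'(a_*U+v_*) = -\frac{\mu_*}{\|v_*\|}\,v_* + \frac{\mu_*}{\sqrt{1-\delta^{-1}}}\,G ,
$$
and in particular $\ell_Y'(a_*U+v_*)\in\mh$.

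Next I would rewrite the right-hand side in terms of $\gamma_*,\sigma_*$. Since $\gamma_* = \|v_*\|/\mu_*$ and $\sigma_* = \|v_*\|/\sqrt{1-\delta^{-1}}$, one has $\mu_*/\|v_*\| = 1/\gamma_*$ and $\mu_*/\sqrt{1-\delta^{-1}} = \sigma_*/\gamma_*$, so the display becomes $\gamma_*\ell_Y'(a_*U+v_*) = \sigma_*G - v_*$. Adding $a_*U+v_*$ to both sides gives
$$
(a_*U+v_*) + \gamma_*\,\ell_Y'(a_*U+v_*) = a_*U + \sigma_*G ,
$$
which is exactly the stationarity characterization of $z = \prox[\gamma_*\ell_Y](a_*U+\sigma_*G)$ (well-defined and single-valued because $\ell_Y$ is $C^1$ and strictly convex, and lying in $\mh$ by \Cref{lm:prox_in_h}). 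Therefore $a_*U+v_* = \prox[\gamma_*\ell_Y](a_*U+\sigma_*G)$, i.e. $v_* = \prox[\gamma_*\ell_Y](a_*U+\sigma_*G) - a_*U$, proving the claimed formula.

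Finally I would verify the system. Equation \eqref{eq:system_2} is immediate from $0\in\partial_a\ml(a_*,v_*)$: by \Cref{lm:objective_func_property} this says $0 = \E[U\,\ell_Y'(a_*U+v_*)] = \E[U\,\ell_Y'(\prox[\gamma_*\ell_Y](a_*U+\sigma_*G))]$. Equation \eqref{eq:system_3} follows from the binding constraint $\mg(v_*)=0$ (established in \Cref{lagrange}), which gives $\E[v_*G] = \|v_*\|\sqrt{1-\delta^{-1}} = \sigma_*(1-\delta^{-1})$; substituting $v_* = \prox[\gamma_*\ell_Y](a_*U+\sigma_*G) - a_*U$ and using $\E[UG]=0$ (independence) yields $\sigma_*(1-\delta^{-1}) = \E[G\,\prox[\gamma_*\ell_Y](a_*U+\sigma_*G)]$. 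For \eqref{eq:system_1} I would square $\gamma_*\ell_Y'(a_*U+v_*) = \sigma_*G - v_*$, take expectations, and use $\E[G^2]=1$, $\E[Gv_*]=\sigma_*(1-\delta^{-1})$ and $\E[v_*^2]=\|v_*\|^2=\sigma_*^2(1-\delta^{-1})$ to get $\gamma_*^2\,\E[\ell_Y'(\cdot)^2] = \sigma_*^2 - 2\sigma_*^2(1-\delta^{-1}) + \sigma_*^2(1-\delta^{-1}) = \sigma_*^2\delta^{-1}$, which rearranges to \eqref{eq:system_1}. There is no deep obstacle here; the only point requiring care is the passage from the abstract inclusion to the pointwise identity for $\ell_Y'(a_*U+v_*)$ — this relies on differentiability of $\ell_Y$ (so $\partial\ell_Y$ is a singleton), on the explicit subdifferential formulas of \Cref{lm:objective_func_property}, and on checking that all random variables involved genuinely lie in $\mh$ so the Hilbert-space identities are meaningful.
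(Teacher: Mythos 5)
Your proof is correct and follows essentially the same route as the paper: you turn the KKT stationarity into $\gamma_*\ell_Y'(a_*U+v_*)=\sigma_*G-v_*$, identify the prox, and then read off \eqref{eq:system_2} from $0\in\partial_a\ml(a_*,v_*)$, \eqref{eq:system_3} from the binding constraint together with $\E[UG]=0$, and \eqref{eq:system_1} by expanding $\E[(\sigma_*G-v_*)^2]$. The only cosmetic difference is in how the prox identity is obtained: the paper notes that the inclusion $-(v_*-\sigma_*G)/\gamma_*\in\partial_v\ml(a_*,v_*)$ makes $v_*$ a minimizer of $v\mapsto\ml(a_*,v)+\E[(\sigma_*G-v)^2/(2\gamma_*)]$ and then invokes \Cref{lm:prox_in_h}, whereas you invert the pointwise stationarity $z+\gamma_*\ell_Y'(z)=a_*U+\sigma_*G$ directly using the singleton subdifferential; both are valid.
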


\begin{proof}
The map $v\mapsto \mg(v)$ is Fr\'echet differentiable at $v=v_*\ne 0$ with $\nabla\mg(v_*) = \frac{v_*}{\|v_*\|} - \frac{G}{\sqrt{1-\delta^{-1}}}$ (\Cref{lm:constraint_property}), while the constraint is binding $\mg(v_*)=0$ (\Cref{lagrange}). Thus, we have 
$$
-\mu_* \nabla\mg(v_*) =  -\mu_*\Bigl(
    \frac{v_*}{\|v_*\|} - \frac{G}{\sqrt{1-\delta^{-1}}}
    \Bigr) \in \partial_{v}\ml(a_*, v_*), \quad 0\in \partial_{a}\ml(a_*, v_*), \quad \mg(v_*)=0, 
$$
 By the definition of $(\sigma_*, \gamma_*)$, the condition $-\mu_* \nabla\mg(v_*) \in \partial_{v}\ml(a_*, v_*)$ yields 
\begin{align*}
   \partial_v \ml(a_*, v_*) \ni -(v_*-\sigma_*G)/{\gamma_*}
\end{align*}
This means that $v_*$ also minimizes the map
$$
\mh\ni v\mapsto \ml(v) + \E\Bigl[\frac{(\sigma_*G-v)^2}{2\gamma_*}\Bigr] = \E\Bigl[\ell_Y(a_*U+v) + \frac{(\sigma_*G-v)^2}{2\gamma_*}\Bigr]. 
$$
Since $\prox[\gamma_* \ell_Y](a_*U+\sigma_*G)-a_*U$ minimizes the integrand and belongs to $\mh$ by \Cref{lm:prox_in_h}, we have 
$$
v_* = \prox[\gamma_* \ell_Y](a_*U+\sigma_*G)-a_*U\in\mh. 
$$
With $\sigma G - v_* = \gamma \ell_Y'(v_*+a_*U)$, this also gives $\ell_Y'(v_*+a_*U)\in \mh$. 
This in particular means $\E[U \ell_Y'(a_* U+v_*)]\in\R$. With $\partial_a \ml(a,v) = \{\E[U \ell_Y'(a U+v)]\} \cap \R$, the condition $0 \in \partial_a \ml(a_*, v_*)$ provides 
$$
0 = \E[U \ell_Y'(a_* U+v_*)]. 
$$
With $v_* = \prox[\gamma_* \ell_Y](a_*U+\sigma_*G)-a_*U$, we get \eqref{eq:system_2}.  
As for \eqref{eq:system_1} and \eqref{eq:system_3}, rearranging $\sigma_* = \|v_*\|/\sqrt{1-\delta^{-1}}$ and $\mg(v_*)=\|v_*\|-\E[v_*G]/\sqrt{1-\delta^{-1}} = 0$ yields
\begin{align*}
    \|\sigma_* G- v_*\|^2 &= \sigma_*^2 - 2\sigma_* \E[v_*G] + \|v_*\|^2\\
    &=\sigma_*^2 -2\sigma_*^2(1-\delta^{-1}) + \sigma_*^2 (1-\delta^{-1})\\
    &= \delta^{-1} \sigma_*^2, \\
    \E[G(v_*+a_*U)] &= \sigma_*(1-\delta^{-1}) + 0.
\end{align*}
Substituting $v_* = \prox[\gamma_* \ell_Y](a_*U+\sigma_*G)-a_*U$ to these two equations, we obtain \eqref{eq:system_1} and \eqref{eq:system_3}. This completes the proof. 
\end{proof}

\begin{lemma}\label{lm:system_to_optimization}
    Suppose $(a_*, \sigma_*, \gamma_*)\in \R\times \R_{>0}^2$ solves the nonlinear system \eqref{eq:system_1}-\eqref{eq:system_3}. Then, $(a_*, v_*)\in\R\times \mh$ with 
    $$
    v_* = \prox[\gamma_* \ell_Y](a_* U +\sigma_* G) - a_*U\in\mh
    $$
    solves the infinite dimensional optimization problem with $\|v_*\|=\sigma_*\sqrt{1-\delta^{-1}}>0$, and the KKT condition \eqref{eq:KKT} is satisfied by the Lagrange multiplier $\mu_* = \sigma_*\sqrt{1-\delta^{-1}}/\gamma_* > 0$
\end{lemma}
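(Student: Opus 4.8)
The plan is to exhibit a KKT point for the constrained problem \eqref{eq:infinite_dimensional_optimization} at the explicitly given candidate and then invoke \Cref{lagrange}. Concretely, I would set $v_* = \prox[\gamma_* \ell_Y](a_* U + \sigma_* G) - a_* U$ and $\mu_* = \sigma_* \sqrt{1-\delta^{-1}}/\gamma_*$ and verify the four relations in \eqref{eq:KKT} for the triple $(a_*, v_*, \mu_*)$. Since $\ml$ is proper convex and lower semicontinuous, $\mg$ is convex and finite, and Slater's condition is already checked inside the proof of \Cref{lagrange}, producing such a KKT point is equivalent to $(a_*, v_*)$ minimizing $\ml$ subject to $\mg(v)\le 0$; this simultaneously supplies the converse half of \Cref{theorem:equivalence_system_optimization}.

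I would first record three preliminary facts. (i) By \Cref{lm:prox_in_h} the prox lies in $\mh$, and $a_* U \in \mh$, so $v_* \in \mh$. (ii) The defining inequality of the proximal map gives $\ell_Y(a_* U + v_*) \le \tfrac{1}{2\gamma_*}(a_* U + \sigma_* G)^2 + \ell_Y(0)$, whose right-hand side is integrable (using $\E[|\ell_Y(0)|]<\infty$ from \Cref{lm:ell_0_bounded}), while $\ell_Y(a_* U + v_*) \ge \inf_u \ell_Y(u)$ is integrable by hypothesis; hence $(a_*,v_*)\in\operatorname{dom}\ml$, so the subdifferential formulas of \Cref{lm:objective_func_property} are available at $(a_*,v_*)$. (iii) Writing $p_* = \prox[\gamma_* \ell_Y](a_* U + \sigma_* G) = a_* U + v_*$ and using that $\ell_Y$ is $C^1$, the stationarity condition defining the prox reads $\sigma_* G - v_* = \gamma_* \ell_Y'(a_* U + v_*)$, so in particular $\ell_Y'(a_* U + v_*) = \gamma_*^{-1}(\sigma_* G - v_*) \in \mh$.

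Next I would read off the norm relations from the system. Since $G \indep U$, equation \eqref{eq:system_3} gives $\E[G v_*] = \E[G p_*] = \sigma_*(1-\delta^{-1})$; equation \eqref{eq:system_1}, after substituting (iii), becomes $\delta^{-1}\sigma_*^2 = \|\sigma_* G - v_*\|^2 = \sigma_*^2 - 2\sigma_* \E[G v_*] + \|v_*\|^2$, and combining the two forces $\|v_*\|^2 = \sigma_*^2(1-\delta^{-1})$, hence $\|v_*\| = \sigma_* \sqrt{1-\delta^{-1}} > 0$ because $\sigma_* > 0$ and $\delta > 1$. Consequently $\mg(v_*) = \|v_*\| - \E[G v_*]/\sqrt{1-\delta^{-1}} = 0$, so the constraint holds with equality and $\mu_* \mg(v_*) = 0$ automatically. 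For the subgradient inclusions: since $v_* \ne 0$, \Cref{lm:constraint_property} gives $\partial \mg(v_*) = \{ v_*/\|v_*\| - G/\sqrt{1-\delta^{-1}}\}$, and inserting $\|v_*\| = \sigma_* \sqrt{1-\delta^{-1}}$ together with the definition of $\mu_*$ and simplifying yields $-\mu_* \nabla\mg(v_*) = \gamma_*^{-1}(\sigma_* G - v_*) = \ell_Y'(a_* U + v_*)$, which by (iii) and \Cref{lm:objective_func_property} belongs to $\partial_v \ml(a_*, v_*) = \partial\ell_Y(a_* U + v_*) \cap \mh$; and \eqref{eq:system_2} gives $\E[U \ell_Y'(a_* U + v_*)] = 0$, so $0 \in \partial_a \ml(a_*, v_*) = \{\E[Uh] : h \in \partial\ell_Y(a_*U+v_*)\}\cap\R$. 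All four parts of \eqref{eq:KKT} are then in force, and \Cref{lagrange} finishes the proof.

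Once identity (iii) is in hand the argument is essentially algebra, so the friction I anticipate is all bookkeeping: pinning down the precise sign and scaling in $\sigma_* G - v_* = \gamma_* \ell_Y'(a_* U + v_*)$, checking the integrability statements so that $(a_*,v_*)\in\operatorname{dom}\ml$ and $\ell_Y'(a_* U + v_*)\in\mh$ (so the intersections defining $\partial_v\ml$ and $\partial_a\ml$ in \Cref{lm:objective_func_property} are nonempty), and matching constants so that $-\mu_*\nabla\mg(v_*)$ lands exactly on $\ell_Y'(a_* U + v_*)$ --- each of which is dictated purely by the definitions of $\sigma_*,\gamma_*,\mu_*$ and the three equations of the system.
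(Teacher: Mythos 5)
Your proposal is correct and follows essentially the same route as the paper: use the prox stationarity identity $\sigma_* G - v_* = \gamma_* \ell_Y'(a_*U+v_*)$ together with the three system equations to get $\|v_*\| = \sigma_*\sqrt{1-\delta^{-1}}$, $\mg(v_*)=0$, and the two subgradient inclusions, then conclude via the KKT equivalence of \Cref{lagrange}. Your explicit check that $(a_*,v_*)\in\operatorname{dom}\ml$ via the proximal inequality is a small extra piece of bookkeeping the paper leaves implicit, but it does not change the argument.
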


\begin{proof} 
    We know from \Cref{lm:prox_in_h} that $v_* \in \mh$ and $\sigma G - v_* = \gamma \ell_Y'(v_*+a_*U)\in \mh$. In this case the subderivaitve of $\ml$ at $(a_*, v_*)$ is
    $$
    \partial_a \ml(a_*, v_*) = \{\E[U\ell_Y'(a_*+v_*)]\}, \quad  \partial_v \ml(a_*, v_*) = \{\ell_Y'(a_*+v_*)\}. 
    $$
    Noting $\E[UG]=0$, the nonlinear system can be written as 
        \begin{align*}
         \delta^{-1} \sigma_*^2 &= \E[(\sigma G - v_*)^2]\\
                0 &= \E[U \ell_Y'(a_*U+v_*)]\\
                \sigma_*(1-\delta^{-1}) &= \E[G v_*].
        \end{align*}
       Here, the second equation gives 
        $$
        0 \in \partial_a \ml(a_*, v_*).
        $$
       Rearranging the first and the third  equations, we have 
       $$
        \|v_*\|^2 = (1-\delta^{-1})\sigma_*^2, \quad \E[v_*G] = \sigma_*(1-\delta^{-1}).  
       $$
       This implies $\|v_*\|>0$ and 
       $\mg(v_*)=0$. Since $\|v_*\|>0$, $\mg$ is differentiable at $v_*$. The derivative formula gives 
       \begin{align*}
        -\frac{\|v_*\|}{\gamma_*} \cdot \nabla \mg(v_*) &= -\frac{\|v_*\|}{\gamma_*}\Bigl(\frac{v_*}{\|v_*\|} - \frac{G}{\sqrt{1-\delta^{-1}}}\Bigr) =\frac{\sigma_* G-v_*}{\gamma_*} = \ell_Y'(a_* U+\sigma_* G) \in \partial_v \ml(a_*,v_*)
       \end{align*}
       Therefore, $(a_*, v_*)$ satisfies the KKT condition \eqref{eq:KKT} with the Lagrange multiplier $\mu_* = \frac{\|v_*\|}{\gamma} > 0$, and $(a_*, v_*)$ solves the constrained optimization problem. 
\end{proof}

\section{Non-degeneracy and uniqueness}

\subsection{Proof of \Cref{lm:nonzero}}
The argument in this proof is inspired by the proof of Lemma 2.6] in \cite{bellec2023existence}. 
    Suppose $v_*=0$. Let $\mu_*>0$ be the associated Lagrange multiplier satisfying the KKT condition \eqref{eq:KKT} so that $v_*=0$ solves the unconstrained optimization problem
    $\min_{v\in\mh}\ml(a_*, v) + \mu_* \mg(v)$. With $\mg(0)=0$, this gives 
    $$
\E[\ell_Y(a_*U)] \le \E[\ell_Y(a_*U+v)] + \mu_* \bigl(\|v\| - \E[vG]/\sqrt{1-\delta^{-1}}\bigr) 
    $$
    for all $v\in\mh$.
    Multiplying the both sides by $\lambda \coloneq \sqrt{1-\delta^{-1}}/\mu_* >0$ and denoting \( f(\cdot)=\lambda (\ell_Y(a_*U+\cdot)-\ell_Y(a_*U)) \), we have
\begin{equation}\label{eq:ma_minorant}
0 \le \ma(v) \coloneq \E[f(v)] + \E[v^2]^{1/2} \sqrt{1-\delta^{-1}} - \E[vG] \quad\text{for all $v\in \mh$}. 
\end{equation}
We parametrize $v\in\mh$ as $v_t = \prox[t f](t G) \in \mh$ for all $t>0$ and show $\ma(v_t) < 0$ for sufficiently small $t>0$. 
Note that $t^{-1} (tG - v_t) \in \partial f(v_t)$ implies
$$
- f(v_t) = f(0) - f(v_t) \ge t^{-1}(tG-v_t) (0-v_t) = -G v_t + t^{-1} v_t^2.
$$
Substituting this to \eqref{eq:ma_minorant}, 
noting that $\E[v_tG]$ is cancelled out, we have 
\begin{align}\label{eq:ma_upper_bound}
\ma(v_t) &\le \E[Gv_t - t^{-1}v_t^2] + \E[v_t^2]^{1/2} \sqrt{1-\delta^{-1}} - \E[v_tG] \le - t \cdot \|t^{-1} v_t\| \bigl(
\|t^{-1} v_t\| - \sqrt{1-\delta^{-1}} 
\bigr).
\end{align}
Now we identify the limit of $\|v_t/t\|$ as $t\to 0+$. The Moreau envelope constructed function
\( t\mapsto \env_f(tG, t) = \frac{1}{2t}(tG-v_t)^2 + f(v_t)\)
has the derivative
\[
-\frac{1}{2 t^2}(tG - v_t)^2 + \frac{1}{t}G (tG - v_t)
= \frac 1 2 \Bigl[G^2 - \Bigl(\frac{v_t}{t} \Bigr)^2\Bigr],
\]
which is increasing in \( t \) because the Moreau envelope $\env_f(x,y)$ is jointly convex in $(x, y)\in \R\times \R_{>0}$ (cf. Lemma D.1 of \citet{thrampoulidis2018precise}). 
This means that \( v_t^2/t^2 \) is non-increasing in \( t \) and has a non-negative limit as \( t\to 0+ \). By the monotone convergence theorem, we get 
\begin{align}\label{eq:monotone_conv_v_t}
    \lim_{t\to 0^+}\|v_t/t\|^2 = \E[\lim_{t\to 0^+} (v_t/t)^2] \in [0, +\infty]    
\end{align}
Let us compute the limit $v_t/t$. First, we claim $v_t\to 0$. By the optimality of $v_t=\prox[tf](tG)$ with $f(\cdot)=\lambda(\ell_Y(a_*U+\cdot)-\ell_Y(a_*U))$, we have 
$$
\frac{1}{2t} (tG-v_t)^2 + t f(v_t) \le \frac{(tG)^2}{2t} + f(0) = tG^2 + 0. 
$$
This gives 
$$
\frac{1}{2} (tG-v_t)^2 \le t^2G^2 - t \inf_x f(x) =  t^2G^2 - t (\inf_u \ell_Y(u) - \ell_Y(a_*U))
$$
Since $(G, \inf_u \ell_Y(u), \ell_Y(a_*U))$ are all bounded in $\ell_1$, they are all finite with probability $1$. This provides $\lim_{t\to 0^+} v_t =  0$. Combined with $G-v_t/t = f'(v_t) = \lambda \ell_Y'(a_*U+v_t)$, since $\ell_Y(\cdot)$ is $C^1$, we get $v_t/t\to G-\lambda \ell_Y'(a_*U)$. Substituting this limit to \eqref{eq:monotone_conv_v_t}, we obtain
$$
\|v_t/t\|^2 \to \E[(G-\lambda \ell_Y'(a_*U))^2] \in [0, +\infty].
$$
Recall that \eqref{eq:ma_minorant} and \eqref{eq:ma_upper_bound} imply
$$
0 \le \mathcal{A}(v_t)/t \le  -\|v_t/t\|(\|v_t/t\|-\sqrt{1-\delta^{-1}}), \quad \forall t>0. 
$$
This excludes the case $\E[(G-\lambda \ell_Y'(a_*U))^2]=+\infty$ otherwise the RHS converges to $-\infty$ as $t\to 0$. Thus, we must have $\E[(G-\lambda \ell_Y'(a_*U))^2]<+\infty$ and  $\ell_Y'(a_*U) \in \mh$. Expanding the square, since $G$ and $\ell_Y'(a_*U)$ is independent, we get 
$$
\lim_{t\to+\infty} \|v_t/t\|^2  = 1 + \lambda^2 \E[\ell_Y'(a_*U)^2].
$$
Substituting this to the upper bound of $\ma(v_t)/t$,
\begin{align*}
    \lim_{t\to 0} -\|v_t/t\|(\|v_t/t\|-\sqrt{1-\delta^{-1}}) &= - \sqrt{(1 + \lambda^2 \E[\ell_Y'(a_*U)^2])} (\sqrt{1 + \lambda^2 \E[\ell_Y'(a_*U)^2]}-\sqrt{1-\delta^{-1}})\\
    & \le - (1-\sqrt{1-\delta^{-1}}) < 0,
\end{align*}
which implies that there exists a sufficiently small $t'$ such that $\ma(v_{t'})/t' <0$. This is a contradiction with $\ma(v_t)/t\ge 0$ for all $t>0$.
Therefore, we must have $v_*\ne 0$.  

\subsection{Proof of \Cref{lm:unique}}

Suppose that there exists two minimizer $(a, v), (a', v')\in \R\times \mh$. Let $(a_t, v_t)=t(a, v) + (1-t)(a', v')$ for all $t\in [0,1]$. Then, by the convexity of objective function and constraint, $(a_t, v_t)$ solves the constrained optimization problem. This implies $\ml(a_t, v_t)=t\ml(a,v)+(1-t)\ml(a', v')$ for all $t\in [0,1]$. With $\ml(a, v)=\E[\ell_Y(aU+v)]$, we must have  
\begin{align*}
\E\Bigl[\ell_Y(t (aU+v) + (1-t) (a'U+v'))
-t\ell_Y(aU+v) - (1-t)\ell_Y(a'U+v')
\Bigr] = 0. 
\end{align*}
By the convexity of $\ell_Y$, 
$$
\ell_Y(t (aU+v) + (1-t) (a'U+v'))
=t\ell_Y(aU+v) + (1-t)\ell_Y(a'U+v') 
$$
for all $t\in [0,1]$. By the strict convexity of $\ell_Y$, we must have $aU+v=a'U+v'$.  
If $a=a'$ holds then we have $v=v'$, completing the proof of uniqueness.
We now show that $a=a'$. The condition $aU+v=a'U+v'$ gives 
$$
v_t = t v + (1-t)v' = t \{v+ U(a-a')\} + (1-t)v'= v' + t U(a-a'),
$$ 
so that $\E[v_t G]=\E[v'G]$ by independence of $G$ and $U$.
Meanwhile, since $(a_t, v_t)$ solves the constrained optimization problem for all $t\in[0,1]$, the constraint is satisfied in equality $\|v_t\|-\E[v_t G]/\sqrt{1-\delta^{-1}} = 0$ for all $t\in [0,1]$ (see \Cref{lagrange}). Then, $t\mapsto \|v_t\|$ must be constant as well.
The polynomial $\|v_t\|^2$ is given
\begin{align*}
    \|v_t\|^2 = \|v' + t U(a-a')\|^2 = \|v'\|^2 + t \E[v' U(a-a')] + t^2 (a-a')^2.
\end{align*}
Since it is constant in $t$,
the quadratic term must be 0, hence $(a-a')^2=0$.

Finally, let us show the uniqueness of Lagrange multiplier. Suppose that there exists two distinct Lagrange multipliers $\mu_* \ne \mu_{**}\in\R_{>0}$ associated with the minimizer $(a_*, v_*)$. Since the subderivative $\partial_v \ml(a_*, v_*)$ is the singleton $\{\ell_Y'(a_*U+v_*)\}$ at the minimizer $(a_*, v_*)$, the KKT conditions 
$$-\mu_*\nabla\mg(v_*), -\mu_{**}\nabla\mg(v_*)\in \partial_v\ml(a_*, v_*)$$ 
lead to 
$$
-\mu_* \nabla\mg(v_*) = -\mu_{**} \nabla\mg(v_{*}) = \ell_Y'(a_*U+v_*). 
$$
Combined with $\mu_{*}\ne \mu_{**}$, this gives 
$\nabla\mg(v_*)=0$ and $\ell_Y'(a_*U+v_*) = 0$. Here $0 = \nabla\mg(v_*)=v_*/\|v_*\|-G/\sqrt{1-\delta^{-1}}$ implies $v_*= \frac{\|v_*\|}{\sqrt{1-\delta^{-1}}} G$. Letting $\sigma_*=\|v_*\|/\sqrt{1-\delta^{-1}}>0$,  substituting this to $\ell_Y'(a_*U+v_*) = 0$, we get
$$
\ell_Y'(a_*U+ \sigma_* G) = 0
$$
Since $\ell_Y$ is strictly convex, this means $\PP(\Omega_{\vee})=1$, which is a contradiction with $\PP(\Omega_{\vee})<1$. This completes the proof of uniqueness of Lagrange multiplier. 

\section{Proof of the phase transition: \Cref{theorem:phase_transition}}
\label{appendix_E_proof_3.4}

\begin{lemma}\label{lm:no_minimizer}
        If $\delta \le \delta_\infty$, the optimization problem \eqref{eq:infinite_dimensional_optimization} does not admit any minimizer. 
\end{lemma}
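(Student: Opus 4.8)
The plan is to produce, for every $\delta\le\delta_\infty$, a fixed nonzero direction $(t_*,p_*)\in\R\times\mh$ that (i) lies in the cone $\mathcal C$ of \eqref{eq:def_cone_Hilbert}, hence generates a ray along which the objective $\ml$ cannot increase, and (ii) satisfies the constraint $\mg(p_*)\le 0$, hence preserves feasibility; translating any hypothetical minimizer of \eqref{eq:infinite_dimensional_optimization} along this direction then yields a whole ray of minimizers, which contradicts \Cref{lm:unique}.

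Concretely, I would first invoke \Cref{lemma:threshold_exist} to obtain a minimizer $t_*$ of $\varphi$ together with $p_*=p(t_*)$, so that $(t_*,p_*)\in\mathcal C$, $\E[p_*G]=\|p_*\|^2$, and $\|p_*\|=\sqrt{1-\delta_\infty^{-1}}$. Since by hypothesis $1<\delta\le\delta_\infty$, we have $\delta_\infty>1$, so $\|p_*\|>0$ and $(t_*,p_*)\neq(0,0)$. For feasibility of the direction, I would compute, using $\E[p_*G]=\|p_*\|^2$,
\[
\mg(p_*)=\|p_*\|-\frac{\E[p_*G]}{\sqrt{1-\delta^{-1}}}=\|p_*\|\Bigl(1-\frac{\|p_*\|}{\sqrt{1-\delta^{-1}}}\Bigr);
\]
and $\delta\le\delta_\infty$ forces $1-\delta^{-1}\le 1-\delta_\infty^{-1}=\|p_*\|^2$, i.e. $\sqrt{1-\delta^{-1}}\le\|p_*\|$, whence $\mg(p_*)\le 0$ (with equality exactly at $\delta=\delta_\infty$).

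Next, assuming towards a contradiction that $(\bar a,\bar v)$ minimizes \eqref{eq:infinite_dimensional_optimization}, I would consider $(\bar a+s t_*,\bar v+s p_*)$ for $s\ge0$. Feasibility is retained because, by the triangle inequality and linearity of $v\mapsto\E[vG]$, $\mg(\bar v+s p_*)\le\mg(\bar v)+s\,\mg(p_*)\le0$. For the objective, write $X=\bar a U+\bar v$ and $Z=t_*U+p_*$; since $(t_*,p_*)\in\mathcal C$ we have $Z\le 0$ on $\Omega_\nearrow$, $Z=0$ on $\Omega_\vee$, $Z\ge0$ on $\Omega_\searrow$, and, $\ell_Y$ being strictly increasing on $\Omega_\nearrow$ and strictly decreasing on $\Omega_\searrow$ (these events together with $\Omega_\vee$ partition the probability space by strict convexity), this yields the pointwise bound $\ell_Y(X+sZ)\le\ell_Y(X)$ for all $s\ge0$. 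The majorant $\ell_Y(X)$ is integrable since $(\bar a,\bar v)\in\operatorname{dom}\ml$, and $\ell_Y(X+sZ)\ge\inf_u\ell_Y(u)$ is integrable by item 2 of \Cref{assumption}, so $(\bar a+s t_*,\bar v+s p_*)\in\operatorname{dom}\ml$ and $\ml(\bar a+s t_*,\bar v+s p_*)\le\ml(\bar a,\bar v)$. Together with feasibility and the optimality of $(\bar a,\bar v)$, every point of the ray $\{(\bar a+s t_*,\bar v+s p_*):s\ge0\}$ is a minimizer; since $p_*\neq0$ these points are pairwise distinct, contradicting \Cref{lm:unique}. Hence \eqref{eq:infinite_dimensional_optimization} admits no minimizer when $\delta\le\delta_\infty$.

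I expect the genuine obstacle to be the simultaneous verification of (i) and (ii): the direction $p_*$ must lie in $\mathcal C$ and satisfy $\mg(p_*)\le0$ at the same time, and this rests on the sharp identities $\E[p_*G]=\|p_*\|^2=1-\delta_\infty^{-1}$ supplied by \Cref{lemma:threshold_exist}, the constraint being saturated precisely at $\delta=\delta_\infty$ — so the construction ``barely'' works across the whole range. The remaining steps are routine convex analysis in Hilbert space. (If one prefers not to invoke \Cref{lm:unique}, the ray of minimizers can be contradicted directly: by strict convexity of $\ell_Y$, the midpoint of $(\bar a,\bar v)$ and $(\bar a+t_*,\bar v+p_*)$ being a minimizer forces $t_*U+p_*=0$ almost surely, hence $p_*=-t_*U$ and $\E[p_*G]=-t_*\E[UG]=0$, contradicting $\E[p_*G]=\|p_*\|^2=1-\delta_\infty^{-1}>0$.)
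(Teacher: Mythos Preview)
Your proof is correct and follows essentially the same strategy as the paper: the same direction $(t_*,p_*)$ from \Cref{lemma:threshold_exist}, the same feasibility check $\mg(p_*)\le 0$ when $\delta\le\delta_\infty$, and the same contradiction with \Cref{lm:unique}. Your direct pointwise monotonicity bound $\ell_Y(X+sZ)\le\ell_Y(X)$ and the immediate use of $\|p_*\|=\sqrt{1-\delta_\infty^{-1}}>0$ are slightly more streamlined than the paper's convex-combination-plus-dominated-convergence derivation of $\ml(a+t_*,v+p_*)\le\ml(a,v)$ and its final detour through $\PP(\Omega_\vee)=1$, but the underlying idea is identical.
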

    
\begin{proof}
        Let us take $(t_*, p_*)\in \mathcal{C}\subset \R\times \mh$ as in \Cref{lemma:threshold_exist} so that 
        $$
        \|p_*\|=\sqrt{1-\delta_\infty^{-1}}, \quad \E[p_*^2]=\E[p_*G]. 
        $$
        Substituting this to $\mg(p_*) = \|p_*\|-\E[p_*G]/\sqrt{1-\delta^{-1}}$, using the condition $\delta\le \delta_\infty$, we have 
        $$
        \mg(p_*)= \|p_*\| \Bigl(1-\frac{\sqrt{1-\delta_{\infty}^{-1}}}{\sqrt{1-\delta^{-1}}}\Bigr) \le 0,
        $$
        i.e., $p_*$ satisfies the constraint $\mg\le 0$ under the assumption $\delta \le \delta_{\infty}$. 
    
    Let us fix $(a,v)\in \operatorname{dom} \ml \cap \operatorname{lev}_{\le 0} \mg$ so that $\E[|\ell_Y(aU+v)|]<+\infty$ and $\mg(v)\le 0$. For any $\nu\ge 1$, consider the convex combination with coefficients $(1-1/\nu)$ and $1/\nu$
    given by
    \begin{equation*}
       (a_\nu, v_\nu) \coloneq (\a,v)(1-1/\nu) + (1/\nu)(\nu t_*,\nu p_*).
    \end{equation*}
    Note that $(a_\nu, v_\nu)\to (a, v)+(t_*, p_*)$ almost surely as $\nu\to+\infty$. 
    By the convexity of $\mg$, the convex combination $(a_\nu, v_\nu)$ also satisfies the constraint $\mg(v_\nu)\le 0$. On the other hand, the convexity of $\ell_Y$ implies 
    $$
    \ell_Y(a_\nu U + v_\nu)\le (1-1/\nu) \ell_Y(aU+v) + ( 1/\nu) \cdot \ell_Y(\nu t_*U + \nu p_*).
    $$
    Here, by the definition of $v_*$,
    \begin{align*}
        \ell_Y(\nu t_*U + \nu p_*) = 
        \begin{cases}
            \ell_Y(\nu (G+Ut_*)_-) & \text{under $\Omega_{\nearrow}$}\\
            \ell_Y(\nu (G+Ut_*)_+) & \text{under $\Omega_{\searrow}$}\\
            \ell_Y(0) & \text{under $\Omega_{\vee}$}
        \end{cases}
    \end{align*}
    and hence $\ell_Y(\nu t_*U + \nu t_*) $ is uniformly upper bounded by $\ell_Y(0)$ for all $\nu\ge 1$. We have 
    \begin{align}\label{eq:upper_estimate_nu}
        \ell_Y(a_\nu U + v_\nu)\le (1-1/\nu) \ell_Y(aU+v) + ( 1/\nu) \ell_Y(0).     
    \end{align}
    Taking expectation, we get 
    \[
    \ml(a_\nu, v_\nu)
        \le
        (1-1/\nu) \ml(\a, v)
        +
        (1/\nu) \E[\ell_Y(0)]. 
    \]
    Now we consider the limit $\nu\to+\infty$. The RHS converges to $\ml(\a,v)$ as $\nu\to+\infty$ since we took $(a, \nu)\in \operatorname{dom}\ml$ and $\E[|\ell_Y(0)|] < +\infty$ by \Cref{lm:ell_0_bounded}. 
    As for the LHS $\ml(a_\nu, v_\nu)=\E[\ell_Y(a_\nu U + v_\nu)]$, from \eqref{eq:upper_estimate_nu}, the integrand is uniformly bounded as 
    $$
    |\ell_Y(a_\nu U + v_\nu)| \le |\inf_u\ell_Y(u)| + |\ell_Y(aU+v)| + |\ell_Y(0)|
    $$
    where the RHS has a finite expectation. Therefore, by the dominated convergence theorem, we get 
    \begin{align*}
        \lim_{\nu\to +\infty }\ml(a_\nu, v_\nu) &= \lim_{\nu\to +\infty } \E[\ell_Y(a_\nu U + v_\nu)]\\
        &= \E[ \lim_{\nu\to +\infty }  \ell_Y(a_\nu U + v_\nu)]\\
        &= \E[\ell_Y((a+t_*)U + v+p_*)] && \text{by $(a_\nu, v_\nu)\to (a+t_*,v+p_*)$ and the continuity of $\ell_Y(\cdot)$}\\
        &= \ml(a+t_*, v+p_*).
    \end{align*}
    We have proved that for any $(\a,v)\in \operatorname{dom}\ml$, the 
    inequality
    $\ml(\a+t_*,v + p_*) \le \ml(\a,v)$ holds.
    
    Now we suppose that a minimizer $(a_{\min}, v_{\min})$ exists. Then it must be unique by \Cref{lm:unique}. Applying the inequality $\ml(\a+t_*,v + p_*) \le \ml(\a,v)$ we have established with $(a, v)=(a_{\min}, v_{\min})$, we must have $t_* = p_* =0$. Substituting this to the definition of $p_*$, we are left with
    $$
    (G)_{-} I\{\Omega_{\nearrow}(Y)\} + (G)_+ I\{\Omega_{\searrow}(Y)\} = 0. 
    $$
    Taking the expectation of this, since $G$ and $Y$ are independent and $\E[(G)_+]=\E[(G)_-]=\sqrt{2/\pi}>0$, we obtain $\PP(\Omega_{\nearrow}(Y))+ \PP(\Omega_{\searrow})=0$, which contradicts the assumption
   $\PP(\Omega_{\vee})=1-\PP(\Omega_{\nearrow}(Y)) - \PP(\Omega_{\searrow})<1$. Therefore, the minimizer does not exist. 
\end{proof}

\begin{lemma}\label{lm:minimizer_exist}
Assume that there exists a positive constant $b$ and positive random variable $D(Y)$ with $\E[D(Y)]<+\infty$ such that  
\begin{align*}
    \ell_Y(u) \ge -D(Y) + \frac{1}{b} \times  
    \begin{cases}
        u  &  \text{under }\Omega_{\nearrow}\\
        -u &  \text{under }\Omega_{\searrow}\\
        |u|& \text{under }\Omega_{\vee}
\end{cases}
\end{align*}
Suppose $\delta > \delta_\infty$. Then for any deterministic $\xi>0$, if
$(a,v)$ satisfies $\mg(v)\le 0$
and $\ml(a, v) \le \xi$, 
then $\a + \|v\|\le C(\xi)$ for some constant depending on $\xi$.
Consequently, the objective function of the minimization problem
is coercive and admits a minimizer $(\a_*,v_*)$ by Proposition 11.15 in \cite{bauschke2017convex}.
\end{lemma}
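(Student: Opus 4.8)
The plan is to prove that the objective of \eqref{eq:infinite_dimensional_optimization} --- i.e.\ $(a,v)\mapsto\ml(a,v)$ with the constraint $v\in\{\mg\le 0\}$ --- has bounded sublevel sets, and then to invoke Proposition 11.15 of \citet{bauschke2017convex}; properness is witnessed by $(0,G)$, which is feasible by \Cref{lm:constraint_property} and has $\ml(0,G)=\E[\ell_Y(G)]<\infty$, convexity and lower semicontinuity come from \Cref{lm:objective_func_property,lm:constraint_property}, and the objective is bounded below by $-\E[\tilde D(Y)]>-\infty$ with $\tilde D(Y):=\max\bigl(D(Y),|\inf_u\ell_Y(u)|\bigr)$ integrable by \Cref{assumption}. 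As a preliminary I would upgrade the affine lower bound to
$$\ell_Y(u)\ \ge\ -\tilde D(Y)+b^{-1}\tilde\psi_Y(u),\qquad \tilde\psi_Y(u):=u_+I\{\Omega_\nearrow\}+|u|\,I\{\Omega_\vee\}+u_-I\{\Omega_\searrow\},$$
valid for every $u$ (on $\Omega_\nearrow$ one splits $u\ge0$ against $u<0$ and uses $\ell_Y\ge\inf_u\ell_Y$; symmetrically on $\Omega_\searrow$); here $\tilde\psi_Y\ge0$ is convex, $1$-Lipschitz, and positively homogeneous in $u$. Since $\E[\tilde\psi_Y(aU+v)]=\|v-\tilde v\|_1$, where $\tilde v$ is the cone projection of the main text so that $(a,\tilde v)\in\mathcal C$ and $v-\tilde v$ equals $(aU+v)_+$, $aU+v$, $-(aU+v)_-$ on $\Omega_\nearrow,\Omega_\vee,\Omega_\searrow$, integrating the display under $\ml(a,v)\le\xi$ gives $\|v-\tilde v\|_1\le C_1(\xi):=b(\xi+\E[\tilde D(Y)])$.

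Next I would control $\|v\|$ through $|a|$. Let $p_*=p(t_*)$ be as in \Cref{lemma:threshold_exist}, so $\|p_*\|=\sqrt{1-\delta_\infty^{-1}}$, $\langle p_*,G-p_*\rangle=0$, $\E[(G-p_*)^2]=\delta_\infty^{-1}$, and $p_*$ minimizes $p\mapsto\E[(G-p)^2]$ over the convex cone $\{p:(t,p)\in\mathcal C\}$; the first-order optimality condition on this cone gives $\langle\tilde v,G-p_*\rangle\le\langle p_*,G-p_*\rangle=0$ since $(a,\tilde v)\in\mathcal C$. Combining $\mg(v)\le0$ (i.e.\ $\E[vG]\ge\sqrt{1-\delta^{-1}}\|v\|$), Cauchy--Schwarz $\E[vp_*]\le\|p_*\|\|v\|$, and this inequality yields the serendipitous bound
$$c_\delta\|v\|\ \le\ \E[v(G-p_*)]\ \le\ \E[(v-\tilde v)(G-p_*)],\qquad c_\delta:=\sqrt{1-\delta^{-1}}-\sqrt{1-\delta_\infty^{-1}},$$
where $c_\delta>0$ \emph{precisely because $\delta>\delta_\infty$}. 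On each of $\Omega_\nearrow,\Omega_\vee,\Omega_\searrow$ one has $|v-\tilde v|\le|aU+v|$ and, from the explicit form of $p_*$, $|G-p_*|\le|G+t_*U|$; splitting $\E[(v-\tilde v)(G-p_*)]$ over the three events, bounding the contribution of $\{|G+t_*U|\le M\}$ by $M\|v-\tilde v\|_1$ and the tail via Cauchy--Schwarz using $\|(v-\tilde v)I\{\Omega_\bullet\}\|_2\le|a|+\|v\|$, I obtain $\E[(v-\tilde v)(G-p_*)]\le MC_1(\xi)+3(|a|+\|v\|)\rho(M)$ with $\rho(M):=\|(G+t_*U)I\{|G+t_*U|>M\}\|_2\to0$ by dominated convergence. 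Fixing $M_0$ so that $3\rho(M_0)\le c_\delta/2$ then yields $\|v\|\le|a|+C_3(\xi)$ with $C_3(\xi):=2M_0C_1(\xi)/c_\delta$.

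The remaining --- and I expect hardest --- step is to bound $|a|$; this is where \Cref{assum:so_not_trivial} enters. I would argue by contradiction: if no such bound exists there are feasible $(a_k,v_k)$ with $\ml(a_k,v_k)\le\xi$, $\mg(v_k)\le0$, $|a_k|\to\infty$; by the previous step $\|v_k\|\le|a_k|+C_3(\xi)$, so $\bar v_k:=v_k/|a_k|$ is bounded in $\mh$ and, along a subsequence, $\bar a_k:=a_k/|a_k|\to s\in\{-1,1\}$ and $\bar v_k\rightharpoonup\bar v_\infty$ weakly. Positive homogeneity of $\mg$ and $\tilde\psi_Y$ gives $\mg(\bar v_k)\le0$ and $\E[\tilde\psi_Y(\bar a_kU+\bar v_k)]\le C_1(\xi)/|a_k|\to0$; the functionals $\mg$ and $w\mapsto\E[\tilde\psi_Y(sU+w)]$ are convex and $1$-Lipschitz, hence weakly lower semicontinuous, so (after a routine continuity estimate in $\bar a_k\to s$) $\mg(\bar v_\infty)\le0$ and $\E[\tilde\psi_Y(sU+\bar v_\infty)]=0$, i.e.\ $(s,\bar v_\infty)\in\mathcal C$. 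Re-running the serendipitous estimate at the limit ($\langle\bar v_\infty,G-p_*\rangle\le0$ since $(s,\bar v_\infty)\in\mathcal C$, whence $\sqrt{1-\delta^{-1}}\|\bar v_\infty\|\le\E[\bar v_\infty G]\le\|p_*\|\|\bar v_\infty\|$) forces $\bar v_\infty=0$ because $c_\delta>0$. But then $(s,0)\in\mathcal C$ says $sU\le0$ a.s.\ on $\Omega_\nearrow$, $U=0$ a.s.\ on $\Omega_\vee$, and $sU\ge0$ a.s.\ on $\Omega_\searrow$, which makes one of the two expectations in \Cref{assum:so_not_trivial} vanish (the second if $s=1$, the first if $s=-1$) --- a contradiction. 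Hence $|a|\le C_4(\xi)$ and therefore $|a|+\|v\|\le C(\xi):=2C_4(\xi)+C_3(\xi)$, so every sublevel set is bounded and Proposition 11.15 of \citet{bauschke2017convex} produces a minimizer. The only delicate bookkeeping is the weak lower semicontinuity / homogeneity manipulations in the compactness step, and checking that $t_*,p_*,c_\delta,\rho,M_0$ depend only on the model so that the constants above are genuine functions of $\xi$.
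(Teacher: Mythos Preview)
Your proof is correct and shares the paper's overall architecture: the $L^1$ bound $\|v-\tilde v\|_1\le C_1(\xi)$ from the affine lower bound on $\ell_Y$, the ``serendipitous'' inequality $c_\delta\|v\|\le\E[(v-\tilde v)(G-p_*)]$, and an $L^1/L^2$ truncation of the right-hand side. Your derivation of $\langle\tilde v,G-p_*\rangle\le0$ via the first-order optimality of the cone projection is exactly the content of the paper's Pythagorean inequality \eqref{pythagorean}, phrased more succinctly.

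The genuine difference is in how $|a|$ and $\|v\|$ are decoupled. The paper \emph{first} obtains $|a|\le C(\xi)(1+\|v\|)$ by a direct elementary argument: under \Cref{assum:so_not_trivial} it fixes $u_0>0$ and events $\Omega_\pm$ (such as $\{U>u_0\}\cap\Omega_\nearrow$) of positive probability, on which the affine lower bound gives $|a|u_0\le b\ell_Y(aU+v)+D(Y)+|v|$; integration yields the claim. This gives $\|v-\tilde v\|_2\le C(\xi)(1+\|v\|)$, and then the truncation level is chosen (depending on $\xi$) so that the $L^2$ tail is absorbed into the left side of the serendipitous bound, producing $\|v\|\le C(\xi)$ outright; the bound on $|a|$ follows. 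You reverse the order: first extract $\|v\|\le|a|+C_3(\xi)$ from the serendipitous bound using only the crude $\|v-\tilde v\|_2\le|a|+\|v\|$, with a truncation level $M_0$ independent of $\xi$; then bound $|a|$ by a weak-compactness contradiction --- any sequence with $|a_k|\to\infty$ yields, after normalization and passing to a weak limit, a point $(s,0)\in\mathcal C$ with $s=\pm1$, which forces one of the two expectations in \Cref{assum:so_not_trivial} to vanish.

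Both routes are sound. The paper's is shorter and fully constructive (all constants are explicit); yours trades the $\Omega_\pm$ computation for an abstract compactness step, which is conceptually clean but non-constructive and requires the weak lower-semicontinuity bookkeeping you flag. A minor simplification: since $|G-p_*|\le|G+t_*U|$ holds on all three events, a single global Cauchy--Schwarz handles the tail and the factor $3$ is unnecessary.
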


\begin{proof}
By \Cref{lemma:threshold_exist}, $\delta_{\infty}$ can be represented as 
$\delta_{\infty}^{-1} = \min_{(t, p)\in \mathcal{C}} \E[(G-p)^2]$, where $\mathcal{C}\subset \R\times \mh$ is the cone defined as 
\begin{align*}
    (a, v)\in \mathcal{C} \Leftrightarrow
    (v+aU) \begin{cases}
        \le 0  &\text{under }\Omega_{\nearrow}\\
        \ge 0  &\text{under }\Omega_{\searrow}\\
        = 0  & \text{under }\Omega_{\vee}\\
    \end{cases}
\end{align*}
and the optimal $(t_*, p_*)\in \argmin_{(t, p)\in \mathcal{C}} \E[(G-p)^2]$ satisfies 
\begin{align}\label{eq:ineq_second_proof_delta}
\|p_*\| = \sqrt{1-\delta_{\infty}^{-1}} <\sqrt{1-\delta^{-1}}, \quad \E[p_*^2]=\E[p_*G]
\end{align}
where we have used $\delta>\delta_{\infty}$. 
Now for all $(a, v)\in\R\times \mh$, define $\tilde{v}\in\mh$ as 
\begin{align*}
   \tilde{v} =-aU + \begin{cases}
       (aU+v)_{-} & \text{under }\Omega_{\nearrow}\\
        (aU+v)_{+} & \text{under }\Omega_{\searrow}\\
        0 & \text{under }\Omega_{\vee}
    \end{cases}
\end{align*}
so that $(a, \tilde{v})\in \mathcal{C}$ for all $(a, v)\in\R\times \mh$. Furthermore, 
\begin{align*}
    v-\tilde{v} = \begin{cases}
        (v+aU)_+ & \text{under }\Omega_{\nearrow}\\
        (v+aU)_- & \text{under }\Omega_{\searrow}\\
        v+aU & \text{under }\Omega_{\vee}
    \end{cases}
    \quad \text{and consequently }
    \quad \|v-\tilde{v}\| \le \|v\| + |a|
\end{align*}
By the condition $\ml(a, v)=\E[\ell_Y(aU+v)]\le \xi$ and the using the variable $D(Y)$ in \Cref{assumption}, we know
\begin{align*}
    |v-\tilde{v}| &\le D(Y) + b \begin{cases}
        \ell_Y((v+aU)_+) & \text{under }\Omega_{\nearrow}\\
        \ell_Y((v+aU)_-) & \text{under }\Omega_{\searrow}\\
        \ell_Y(v+aU)& \text{under }\Omega_{\vee} 
    \end{cases}\\
    &= D(Y) + b \ell_Y(aU+v) +  b \begin{cases}
        \ell_Y(0) - \ell_Y(v+aU) & \text{if }\Omega_{\nearrow} \text{ and } aU+v \le 0\\
        \ell_Y(0) - \ell_Y(v+aU) & \text{if }\Omega_{\searrow} \text{ and } aU+v \ge 0\\
      0 & \text{otherwise}
    \end{cases}\\
    &\le D(Y) + b \ell_Y(aU+v) +  b|\ell_Y(0)-\inf_u \ell_Y(u)| 
\end{align*}
so that
$$
\E[|v-\tilde{v}|] \le b\xi + \E[D(Y)] + b (\E[|\ell_Y(0)|] + \E[|\inf_u\ell_Y(u)|]) = C^{(1)}(\xi). 
$$
By \Cref{assum:so_not_trivial}, we can take a sufficiently small $u_0>0$ such that 
\begin{align*}
    \Omega_{+} &\coloneq \Bigl\{\{U>u_0\} \cap \Omega_{\nearrow}\Bigr\} \cup \Bigl\{\{U<-u_0\} \cap \Omega_{\searrow} \Bigr\} \cup  \Bigl\{ \{|U|>u_0\}\cap \Omega_{\vee}\Bigr\}\\
    \Omega_{-} &\coloneq \Bigl\{\{U<-u_0\} \cap \Omega_{\nearrow}\Bigr\} \cup \Bigl\{\{U>u_0\} \cap \Omega_{\searrow} \Bigr\} \cup  \Bigl\{ \{|U|>u_0\}\cap \Omega_{\vee}\Bigr\}
\end{align*}
have positive probabilities. Let $p_0=\min(\PP(\Omega_{-}), \PP(\Omega_{+})) > 0$. 

Suppose $a>0$. Under $\{U>u_0\} \cap \Omega_{\nearrow}$, it holds that $aU+v \le b \ell_Y(aU+v) + D(Y)$ and hence 
$$
a u_0 \le aU \le b \ell_Y(aU+v) + D(Y) - v.
$$
Under $\{U<-u_0\} \cap \Omega_{\searrow}$, we have  $-(aU+v) \le b \ell_Y(aU+v) + D(Y)$ and hence
$$
a u_0 \le a(-U) \le  b \ell_Y(aU+v) + D(Y) + v
$$
Finally, under $\{|U|>u_0\}\cap\Omega_{\vee}$, we have $|a U + v| \le b \ell_Y(aU+v) + D(Y) + v$ so that 
$$
a u_0 \le a|U| \le  |aU+v|+|v|\le  b \ell_Y(aU+v) + D(Y) + |v|.
$$
Combining them all together,
\begin{align*}
    au_0 \PP(\Omega_{+}) & \le  \E[I\{\Omega_+\} a u_0] \\
    &\le b \E[(\ell_Y(aU+v) + D(Y) + |v|) I\{\Omega_{+}\}]\\
    &= b \E[(\underbrace{\ell_Y(aU+v) - \inf_{u}\ell_Y(u)}_{\ge 0} + \inf_u\ell_Y(u) + \underbrace{D(Y) + |v|}_{\ge 0}) I\{\Omega_{+}\}]\\
    &\le b \E[\ell_Y(aU+v) - \inf_{u}\ell_Y(u)] + \E[\inf_u\ell_Y(u) I\{\Omega_+\}] + \E[D(Y)] + \E[|v|]\\
    &= b \E[\ell_Y(aU+v)] - \E[\inf_u\ell_Y(u) I\{\Omega_+^c\}] + \E[D(Y)] + \E[|v|]\\
    &\le b \xi  + \E[|\inf_u\ell_Y(u)|] + \E[D(Y)] + \|v\|_2.
\end{align*}
By the same argument, if $a<0$, considering the event $\Omega_{-}$, we have 
$$
(-a) u_0 \PP(\Omega_{-}) \le  b\xi + \E[|\inf_u\ell_Y(u)|] + \E[D(Y)] + \|v\|_2
.
$$
Combined with $\min(\PP(\Omega_{+}), \PP(\Omega_{-})) =p_0>0$, we have that
$$
|a| \le (u_0 p_0)^{-1} \Bigl(\xi + \E[|\inf_u\ell_Y(u)|] + \E[D] + \|v\|_2\Bigr) \le \C(\xi)(1+\|v\|_2)
$$
With $\|v-\tilde{v}\|\le |a|+\|v\|$, we obtain
\begin{align}\label{eq:bound-L2-tildev-v}
    \|v-\tilde{v}\| \le C^{(2)}(\xi) (1 + \|v\|_2).     
\end{align}

Now we claim that the Pythagorean inequality
\begin{equation}
    \label{pythagorean}
    \E[(G-p_*)^2] + \E[(p_*-\tilde v)^2] \le \E[(G-\tilde v)^2] 
\end{equation}
holds.
A proof of \eqref{pythagorean} is as follows.
Define for $s\in[0,1]$ the convex combination
$v_s = p_* (1-s) + s \tilde v$ and consider the function
$$
\varphi(s) = \E[(G-v_s)^2] - \E[(G-p_*)^2]- \E[(v_s-p_*)^2].
$$
By the definition of $v_s$, it follows that $\varphi(0)=0$. Furthermore, $\varphi(s)$ is linear in $s$, as the quadratic term $\E[v_s^2]$ cancels out. On the other hand, the optimality of $(t_*, p_*)\in \argmin_{(t,p)\in\mathcal{C}}\E[(G-p)^2]$ implies that $\E[(G-p_*)^2] \le \E[(G-v_s)^2]$. Combining this with the definition of $\varphi(s)$, we have 
$\varphi(s) \ge - \E[(v_s - p_*)^2] = - s^2 \E[(p_*-\tilde{v})^2].$
Since $\varphi(s)$ is linear in $s$ and satisfies $\varphi(s)\ge -O(s^2)$ as $s\to0$,
the slope $\varphi'(0)$ must be non-negative. This implies $0 = \varphi(0)\le \varphi(1)$, which is exactly inequality \eqref{pythagorean}.
% Here, we have $0 \le \E[(G-v_s)^2]-  \E[(G - p_*)^2]$ by the optimality of $(t_*, p_*)\in \argmin_{(t,p)\in\mathcal{C}}\E[(G-p)^2]$, and the last term
% $-\E[(v_s-p_*)^2]$ is $O(s^2)$ by definition of $v_s$.
% Note that $\varphi(0)=0$ and the quadratic term in $\varphi(s)$ cancel
% out. 
% Since $\varphi(s)$ is linear in $s$ and $\varphi(s)\le O(s^2)$ as $s\to0$,
% the slope $\varphi'(0)$ must be non-negative and $\varphi(0)\le \varphi(1)$
% which is exactly inequality \eqref{pythagorean}.

In both sides of \eqref{pythagorean},
$\E[\tilde v^2]$ cancel out, $\E[G^2]=1$ and
$\E[(G-p_*)^2]=1-\|p_*\|^2$ by \eqref{eq:ineq_second_proof_delta}, so that \eqref{pythagorean}
can be rewritten as
\begin{equation}
    -2 \E[p_*\tilde v]\le -2\E[\tilde v G ],
    \qquad
    \text{i.e., } \qquad
    \E[\tilde v(p_*-G)]\ge 0
    .
    \label{25_tilde_v_p_star}
\end{equation}
From this, for all $v$ satisfying the constraint $\mg(v) =\|v\| - \E[vG]/\sqrt{1-\delta^{-1}} \le 0$, 
\begin{align}
    \Bigl(\sqrt{1-\delta^{-1}}-\|p_*\|\Bigr)\|v\| & \le \E[vG] - \|p_*\| \|v\| && \text{by $\mg(v) =\|v\| - \E[vG]/\sqrt{1-\delta^{-1}} \le 0$} \nonumber \\
    &\le \E[v(G-p_*)] && \text{by the Cauchy-Schwarz inequality $\E[p_*v]\le \|p_*\|\|v\|$}\nonumber \\
    &\le \E[(\tilde v - v)(p_*-G)] && \text{by $\E[\tilde v(p_*-G)]\ge 0$ in \eqref{25_tilde_v_p_star}. } \label{25_serependipitous}
\end{align}
The parenthesis $\sqrt{1-\delta^{-1}}-\|p_*\|$ on the left is positive thanks to \eqref{eq:ineq_second_proof_delta}. On the right-hand side, considering the event $I\{|p_*-G| > C\}$ for some constant $C>0$ to be specified later, the Cauchy-Schwarz inequality gives
\begin{align*}
    \E[(\tilde{v}-v)(p_*-G)] &\le C \E[|\tilde{v}-v|] + \E[I\{|p_*-G|> C\}|p_*-G| |\tilde{v}-v|] \\
    &\le C \E[|\tilde{v}-v|] + \sqrt{\E[I\{p_*-G|\ge C\}|p_*-G|^2]} \cdot \|\tilde{v}-v\|. 
\end{align*}
Since $p_*-G$ is bounded in L2, we can take a sufficiently large $C^{(3)}(\epsilon)$ for all $\epsilon>0$  such that $\E[I\{|p_*-G| > C^{(3)}(\epsilon)\} |p_*-G|^2] \le \epsilon^2$. Thus, we have that for all $\epsilon>0$, 
\begin{align}
    \Bigl(\sqrt{1-\delta^{-1}}-\|p_*\|\Bigr)\|v\|
    &\le C^{(3)}(\epsilon)\E|\tilde v -v| + \epsilon \|v-\tilde v\|_2.
\end{align}
Recall the bound $\|v-\tilde{v}\|\le C^{(2)}(\xi) (1 + \|v\|)$ in \eqref{eq:bound-L2-tildev-v}, so if we take a sufficiently small $\epsilon=\epsilon_0$ such that $ \epsilon_0 C^{(2)}(\xi)  \le (\sqrt{1-\delta^{-1}}-\|p_*\|)/2$, we obtain
\begin{equation}
    \frac 1 2 \bigl(\sqrt{1-\delta^{-1}}-\|p_*\|\bigr)\|v\|
    \le C(\epsilon_0) \E|\tilde v - v| + \epsilon_0 C^{(2)}(\xi)
\end{equation}Since 
$\E|\tilde v - v|\le C^{(1)}(\xi)$ was established at the beginning,
this completes the proof. 
\end{proof}

\end{document}